\documentclass[11pt, reqno]{amsart}
\usepackage{amsmath,amsthm,amscd,amssymb,amsfonts, amsbsy}

\usepackage{url}
\usepackage{latexsym, color, enumerate}
\usepackage{pxfonts}
\usepackage{mathrsfs}
\usepackage{marginnote}
\usepackage{todonotes}
\usepackage[colorlinks=true, pdfstartview=FitV, linkcolor=blue, citecolor=blue, urlcolor=blue]{hyperref}
\usepackage{mathtools}
\mathtoolsset{showonlyrefs}
\usepackage[margin=1 in]{geometry}
\usepackage{comment}

\allowdisplaybreaks[4]
\usepackage{lipsum}

\newcommand\blfootnote[1]{%
  \begingroup
  \renewcommand\thefootnote{}\footnote{#1}%
  \addtocounter{footnote}{-1}%
  \endgroup
}
\usepackage[numbers,sort]{natbib}

\usepackage{appendix}

\providecommand{\customgenericname}{}
\newcommand{\newcustomtheorem}[2]{%
  \newenvironment{#1}[1]
  {%
   \renewcommand\customgenericname{#2}%
   \renewcommand\theinnercustomgeneric{##1}%
   \innercustomgeneric
  }
  {\endinnercustomgeneric}
}

\newcustomtheorem{customthm}{Theorem}

\usepackage{tikz}
\usetikzlibrary{snakes}
\usepackage{multicol}
\usepackage{cancel}

\numberwithin{equation}{section}

\theoremstyle{plain}
\newtheorem{theorem}{Theorem}[section]
\newtheorem{lemma}[theorem]{Lemma}
\newtheorem{corollary}[theorem]{Corollary}
\newtheorem{proposition}[theorem]{Proposition}

\theoremstyle{definition}
\newtheorem{definition}[theorem]{Definition}

\newtheorem{example}[theorem]{Example}

\newtheorem{remark}[theorem]{Remark}

\newtheorem*{theorem*}{Theorem}

\newcommand{\re}{\mathrm{Re}}
\newcommand{\im}{\mathrm{Im}}
\newcommand{\w}{\mathrm{w}}
\newcommand{\Op}{\mathrm{Op}}
\newcommand{\st}{\sup_{0 \leq t \leq T}}
\newcommand{\abs}[1]{\lvert#1\rvert}
\newcommand{\norm}[1]{\left\|#1\right\|}

\newcommand{\bracket}[1]{\left\{#1\right\}}
\newcommand{\jb}{\langle \xi \rangle}

\def\XXint#1#2#3{{\setbox0=\hbox{$#1{#2#3}{\int}$}
		\vcenter{\hbox{$#2#3$}}\kern-.5\wd0}}

\newcommand{\R}{\mathbb{R}}

\begin{document}

\title[]{Smoothing effect for third order operators with variable coefficients}

\author[Federico and Tramontana]{Serena Federico and Davide Tramontana}

\address{Serena Federico
\newline \indent Dipartimento di Matematica, Universit\`a di Bologna
\newline \indent Piazza di Porta San Donato 5, 40126, Bologna, Italy}
\email{serena.federico2@unibo.it}

\address{Davide Tramontana
\newline \indent Dipartimento di Matematica, Universit\`a di Bologna
\newline \indent Piazza di Porta San Donato 5, 40126, Bologna, Italy}
\email{davide.tramontana4@unibo.it}

\subjclass[2020]{
35B45; 35B65; 35Q60; 35Q35;35Q55; 35S10}
\keywords{Smoothing effect; Third order operators with variable coefficients; Dispersive equations.}
\blfootnote{S. Federico and D. Tramontana are members of GNAMPA.}

\begin{abstract}
In this work we study the smoothing effect of some variable coefficient operators of the form $D_t-A$, where $A$ is a Weyl-quantized pseudo-differential operator of order $m=2,3$. The class under consideration includes, among others, KdV-type and ultrahyperbolic Schr\"odinger operators.
We prove \textit{homogeneous} and \textit{inhomogeneous} smoothing estimates and use them to get well-posedness results for some NLIVPs with derivative nonlinearities. Finally, we investigate the so called non-trapping property of the bicharacteristic curves of the principal symbol of our operators.
\end{abstract}

\maketitle

\setcounter{secnumdepth}{5}
\setcounter{tocdepth}{5}
\tableofcontents

\parindent = 10pt     
\parskip = 8pt
\section{Introduction}

The aim of this work is to investigate the validity of local smoothing estimates for third order variable coefficient operators of the form 
\begin{align}\label{eqP}
    P(t,x,D_t,D_x)=D_t-A(x,D), 
\end{align}
$$\quad D_t:=-i\partial_t,\quad  D_x=(D_{x_1},\ldots, D_{x_n}):=(-i\partial_{x_1},\ldots,-i\partial_{x_n}),$$
where $A$ is a pseudo-differential operator of order $m=2,3$, i.e. in the class $\Psi^m_{1,0}(\R^n)$, such that its Weyl symbol $a\in S^m(\R^n):=S^m_{1,0}(\R^n)$  can be written as 
\begin{align}
    a (x,\xi)= a_m(x,\xi)+a_{m-1}(x,\xi),
\end{align}
where $a_j \in S^j(\R^n)$, $j=m-1,m,$ and $a_m$ is real valued. 
We will assume that the real part $\re (a)$ of the symbol satisfies \eqref{ellderivatives} and \eqref{smallcoeffa2a3} below, that is, we require:
\begin{itemize}
\item [(i)] There exists  $C>0$ such that 
\begin{equation}
\frac{1}{C}\abs{\xi}^{m-1}\leq \abs{\nabla_\xi \re (a)}\leq C\abs{\xi}^{m-1}, \quad \forall (x,\xi)\in \R^{n}\times \R^n;
\end{equation}
\item[(ii)] For all $\alpha, \beta \in \mathbb{N}_0^n$, $|\beta|\geq 1$, and for $\varepsilon>0$ sufficiently small, there exists $\lambda \in L^1([0,+\infty[)\cap C^0([0,+\infty[)$ strictly positive and non-increasing such that
\begin{equation}
\abs{\partial^\beta_x \partial_\xi^\alpha \re(a)} \lesssim \varepsilon\lambda(\abs{x})\abs{\xi}^{m-\abs{\alpha}}, \quad \forall (x,\xi)\in \R^{n}\times \R^n.
\end{equation}
\end{itemize}
We will also assume that the imaginary part of the symbol, that is $\im(a)= \im(a_{m-1}) \in S^{m-1}(\R^n)$, satisfies the following  smallness condition: for $\lambda \in L^1([0,+\infty[)\cap C^0([0,+\infty[)$ as above, there exists $c_0>0$ such that
   $$|\im(a_{m-1})(x,\xi)|\leq c_0 \lambda(\abs{x})|\xi| ^{m-1},\quad \forall (x,\xi)\in \R^n \times \R^n.$$ 

We recall that an equation exhibits a smoothing effect whenever its solution gains regularity with respect to the initial data (\textit{homogeneous smoothing effect}) and/or with respect to the inhomogeneous term of the equation (\textit{inhomogeneous smoothing effect}).
It is well-known that whenever a smoothing estimate is available for an evolution operator, then well-posedness results for the suitable corresponding nonlinear initial value problems (NLIVPs) can be established by means of standard techniques based on the contraction argument. 
The suitable NLIVPs we refer to are those with derivative and polynomial nonlinearities, where the order of derivation involved depends on the order of the operator. Let us remark that while polynomial nonlinearities can also be handled by using the so-called Strichartz estimates, when derivatives appear in the nonlinear term the use of smoothing estimates becomes crucial. 

Our interest in third-order operators of the form \eqref{eqP} is motivated by the applications to KdV-type operators with variable coefficients, possibly defined on $\R_t\times\R^n$ with $n\geq 1$.  Examples of operators of this type are, for instance, those studied in \cite{F24} and in \cite{KS},  which
also include the classical constant coefficient cases given by the (linear) KdV and ZK equations. The importance of such equations comes from their many applications in water waves, plasma physics, nonlinear optics and algebraic geometry, due to the large number of conservation laws and the behavior of the solutions as solitary waves for large $t$ (see \cite{ZDVC}, \cite{Mi76} and the references therein). 

The problem we investigate here –  the \textit{local smoothing effect} for variable coefficient operators – was studied in \cite{CKS,D94,D96,F22, FR24,FS21,KPV04,KPRV05} for operators as in \eqref{eqP} with $A$ being an operator of order two instead of three, that is when $a=a_2+a_1$, with $a_2\in S^2(\R^n)$ real valued and elliptic, and $a_1\in S^1(\R^n)$. In this framework, \textit{homogeneous} local and microlocal smoothing estimates have been proved by Doi in \cite{D96} and by Craig, Kappeler and Strauss in \cite{CKS}, respectively. By a result in \cite{D96} known as Doi's lemma, Kenig et al. proved the general \textit{inhomogeneous} smoothing effect for variable coefficient Schr\"odinger operators in \cite{KPRV05}. In the same work, Kenig et al. established a version of Doi's lemma for ultrahyperbolic operators, a lemma which they used to get the smoothing effect of variable coefficient ultrahyperbolic Schr\"odinger operators. In the context of Shr\"odinger equations with time-dependent variable coefficients, the smoothing effect, both homogeneous and inhomogeneous, was studied in \cite{F22, FR24,FS21}. As for KdV-type equations with variable coefficients, they have attracted lots of attention recently; some results can be found in \cite{CKS2,KS,Cai}.  In particular, in \cite{KS} the smoothing effect is studied in any space dimension for operators with constant coefficient leading part, while in \cite{CKS2,Cai} the smoothing effect in presence of a variable coefficients leading  operator is studied in one-space dimension, and the gain of regularity is measured according to the degree of vanishing of the initial data at infinity, hence in suitable weighted Sobolev spaces. As for the local well-posedness of KdV-type equations with variable coefficients in one space-dimension, we wish to mention the recent paper \cite{MTZ}. Let us stress that the approach in \cite{MTZ} is completely different from ours, which is based on the use of smoothing estimates to handle derivative nonlinearities; however, on the other side, the approach in \cite{MTZ} allows to work in a low regularity regime, a problem we do not focus on here.

The main novelty of this paper – a part from the study of third-order operators with variable coefficients in the leading part of $A$ in any space dimension – is the introduction of two conditions, \eqref{ellderivatives} and \eqref{smallcoeffa2a3}, which are easy to verify and which guarantee the validity of smoothing estimates. More precisely, we will consider pseudo-differential operators $A \in \Psi^m(\R^n)$, for $m=2,3$, whose Weyl symbol $a$ splits as $a=a_m+a_{m-1}$ and satisfies (i) and (ii) above. Under these assumptions on $\re(a)$– the splitting of the symbol and \eqref{ellderivatives} and \eqref{smallcoeffa2a3}  for $\re(a)$ – and under the smallness assumptions on $\im (a)$ introduced above, then $D_t-A$ has the desired \textit{smoothing effect}, that is the one stated in Theorem \ref{thm.LIVP}. A crucial point in the proof of Theorem \ref{thm.LIVP} is that $\re(a)$ is \textit{admissible} (see Definition \ref{def.admissible}), which we know to be true as a consequence of the validity of \eqref{ellderivatives} and \eqref{smallcoeffa2a3} (see Proposition \ref{propsuffadmiss}). In other words, \eqref{ellderivatives} and \eqref{smallcoeffa2a3} are sufficient conditions for the admissibility of a (real) symbol.

Let us point out that our notion of \textit{admissibile symbol (or operator)} applies to symbols (or operators) of any order. This allows us to apply the admissibility to get smoothing estimates for \eqref{eqP} when $A$ is of order $m=2,3,$ and to recover the well-known results in \cite{KPRV05} for Schr\" odinger and ultrahyperbolic Schr\" odinger operators with variable coefficients. Unfortunately, when $A$ has order higher than three, we do not have at our disposal strong enough a priori estimates to conclude the expected smoothing result for $D_t-A$.

We wish to highlight that in the pioneering works \cite{CKS,D96,KPRV05}, the authors use different strategies to prove their results; however, a common requirement is the validity of the so-called \textit{non-trapping condition} (see also \cite{MMT12,MMT14,MMT2021,ST02} for different smoothing-type problems for variable coefficient Schr\"odinger operators related with the non-trapping properties). The latter is a requirement on the bicharacteristic curves of the principal symbol $a_2$, which, roughly speaking, must escape from any compact set (see Definition \ref{nontrapped}). When the operator is elliptic and has variable coefficients, there are some conditions ensuring that the non-trapping condition holds.
When the operator is not elliptic and has variable coefficients, the behavior of the bicharacteristics is much more complicated and hard to predict, not to mention that these curves are difficult if not impossible to compute even in the second-order elliptic case (with variable coefficients, the constant coefficient case is trivial).
Because of the aforementioned reason, we did not rely on the strategies used in \cite{D96}, \cite{CKS} and \cite{KPRV05} based on geometric assumptions on the bicharacterics of the principal symbol, simply because we could not.
Nevertheless, even if we avoid direct assumptions on the bicharacteristics of the principal symbol of $A$ in \eqref{eqP}, of course there must be a relation between our conditions and the behavior of these curves.
It is important to point out that we work with non-elliptic operators in general, and all the results available in the literature related to the non-trapping properties hold for elliptic operators only. This motivated our investigation of the non-trapping properties of the "elliptic part" of our operators. More precisely, we will prove that the so called \textit{strongly elliptic points} (see Definition \ref{def.elliptic.cosphere}) are \textit{nontrapped} (see Definition \ref{nontrapped}). 
A direct consequence of our result, which holds under suitable conditions, is that the non-trapping property for globally elliptic operators of any order holds at any point of the phase space. As for the behavior of the bicharacteristic curves starting at non-strongly elliptic points,  it is a phenomenon that is difficult to describe and that we intend to investigate in the future.

We conclude this introduction by stating our main results followed by the plan of the paper.

Theorem \ref{thm.LIVP} below is a smoothing result for suitable operators of order $m=2,3$, while Theorem \ref{thm.NLIVP} is a local well-posedness result for evolution  operators of order three (the case $m=2$ was already solved in \cite{KPRV05}).
\begin{theorem}\label{thm.LIVP}
Let $a\in S^m(\R^n)$, with $m=2,3$ be such that \eqref{cond.a} and \eqref{cond.ima} hold with $\lambda(\abs{x})=\langle x\rangle^{-N}$, where $N\in\mathbb{N}$ and $N>1$. Assume also that $\re(a)$ satisfies  \eqref{ellderivatives} and \eqref{smallcoeffa2a3} with the same $\lambda(\abs{x})$.
Then, given $u_0 \in H^s(\R^n)$, $s\in \R$,
\begin{itemize}
    \item [(i)] If $f\in L^1([0,T];H^s(\R^n))$, the IVP \eqref{LIVP} has a unique solution $u\in C([0,T];H^s(\R^n))$ satisfying
    
    \begin{align}
    \sup_{0\leq t\leq T}\|u\|_s\lesssim e^{C_1T} \left(\|u_0\|_s+\int_0^T \|f(t)\|_sdt\right).
\end{align}
\item [(ii)]  If $f\in L^2([0,T];H^s(\R^n))$, the IVP \eqref{LIVP} has a unique solution $u\in C([0,T];H^{s}(\R^n))$ satisfying
\begin{align}
    \sup_{0\leq t \leq T}\|u\|^2_s+\int_0^T (\lambda(|x|)\Lambda ^{s+\frac{m-1}{2}}u(t,\cdot),\Lambda^{s+\frac{m-1}{2}}u(t,\cdot))_0\,dt\lesssim e^{C_1T} \left(\|u_0\|_s^2+\int_0^T \|f(t)\|_s^2 dt\right).
\end{align}
\item [(iii)]  If $\Lambda^{s-\frac{m-1}{2}}f\in L^2([0,T]\times \R^n;\lambda(\abs{x})^{-1} dx dt)$, the IVP \eqref{LIVP} has a unique solution $u\in C([0,T];H^{s}(\R^n))$ satisfying
\begin{align}
    &\sup_{0\leq t \leq T}\|u\|^2_s+\int_0^T (\lambda(|x|)\Lambda^{s+\frac{m-1}{2}}u(t,\cdot),\Lambda^{s+\frac{m-1}{2}}u(t,\cdot))_0\,dt\\
    & \quad \lesssim e^{C_1T} \left(\|u_0\|_s^2+\int_0^T (\lambda(|x|)^{-1}\Lambda^{s-\frac{m-1}{2}}f(t,\cdot),\Lambda^{s-\frac{m-1}{2}}f(t,\cdot))_0dt\right).
\end{align}
\end{itemize}
 \end{theorem}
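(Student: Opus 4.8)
The plan is to reduce everything to a single \emph{a priori} energy estimate obtained via a well-chosen pseudo-differential change of variable, and then upgrade it to the three stated inequalities by duality and functional-analytic arguments. First I would conjugate the operator $D_t - A$ by $\Lambda^s = \langle D\rangle^s$, which is harmless since $\Lambda^s$ commutes with $D_t$ and, modulo lower-order terms controlled by Gårding-type inequalities, only shifts the Sobolev index; this lets me assume $s=0$ throughout. The heart of the matter is then a coercive energy estimate in $L^2$: I would look for a bounded, invertible, self-adjoint pseudo-differential operator $K = \Op(k)$ of order zero — built from the symbol $\re(a)$ via its admissibility (Definition \ref{def.admissible}, available through Proposition \ref{propsuffadmiss} because \eqref{ellderivatives} and \eqref{smallcoeffa2a3} hold) — such that for $u$ solving $\partial_t u = iAu + if$ one has
\begin{align}
\frac{d}{dt}(Ku,u)_0 \geq c\int \lambda(|x|)\,|\Lambda^{\frac{m-1}{2}}u|^2\,dx - C(Ku,u)_0 - C|(Ku,f)_0|.
\end{align}
The admissibility of $\re(a)$ is exactly what produces, through the symbol calculus, the positive commutator term $[A,K]$ with principal symbol bounded below by $c\,\lambda(|x|)|\xi|^{m-1}$; the contribution of $\im(a_{m-1})$ is absorbed into the good term thanks to the smallness constant $c_0<1$, and the contribution of $a_{m-1}$'s real part and of the error terms in the calculus is absorbed into $C(Ku,u)_0$.

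From this differential inequality the three parts follow by now-standard manipulations. For (i), I would integrate and use $(Ku,u)_0 \sim \|u\|_0^2$ together with Grönwall, after estimating $|(Ku,f)_0|\lesssim \|f\|_0\|u\|_0$ and keeping the good term on the left only as a nonnegative quantity. For (ii), the same integration keeps the good term $\int_0^T\int\lambda(|x|)|\Lambda^{\frac{m-1}{2}}u|^2$, and the forcing is handled by $|(Ku,f)_0|\le \tfrac12\|f\|_0^2 + \tfrac12\|u\|_0^2$ before applying Grönwall. For (iii), the only change is in how the forcing pairs against the solution: I would split $(Ku,f)_0$ by inserting $\lambda(|x|)^{1/2}\lambda(|x|)^{-1/2}$ and using $|(Ku,f)_0| \le \delta\int\lambda(|x|)|\Lambda^{\frac{m-1}{2}}u|^2 + C_\delta\int \lambda(|x|)^{-1}|\Lambda^{-\frac{m-1}{2}}f|^2$, with $\delta$ small enough to be absorbed by the good term on the left-hand side; here one must be slightly careful that $\Lambda^{\frac{m-1}{2}}$ and $\Lambda^{-\frac{m-1}{2}}$ sit on the correct factors, which is arranged by moving $K$ and powers of $\Lambda$ around at the cost of order-zero (hence harmless) commutators with the weight $\lambda(|x|)$.

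\emph{Existence and uniqueness.} The estimates above are \emph{a priori}; to produce the actual solution $u\in C([0,T];H^s)$ I would run the standard parabolic-regularization / Galerkin scheme: solve $\partial_t u_\epsilon = iAu_\epsilon - \epsilon\langle D\rangle^{2}u_\epsilon + if$ (which is well-posed by semigroup theory since $-\epsilon\langle D\rangle^2$ dominates), note that the energy estimate is \emph{uniform} in $\epsilon$ because the extra term has a favorable sign, and pass to the limit; uniqueness and continuity in time come from applying the estimate to differences and from a density argument. The main obstacle, as always in this circle of ideas, is the construction of $K$ and the verification that the commutator $[A,K]$ genuinely yields the gain $\lambda(|x|)|\xi|^{m-1}$ with a constant uncontaminated by the non-ellipticity of $a_m$ — this is precisely where the hypotheses \eqref{ellderivatives}, \eqref{smallcoeffa2a3} and the smallness of $\im(a_{m-1})$ must be used in a coordinated way, and where the restriction $m\le 3$ enters (for $m\ge 4$ the error terms in the symbol calculus are of order $\ge m-2 \ge \tfrac{m-1}{2}+ \tfrac12$ relative to the good term and can no longer be absorbed). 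I expect this construction to be carried out by the admissibility machinery already set up earlier in the paper, so in the write-up the real work is bookkeeping the orders of the remainder terms and the Grönwall constants $C_1$.
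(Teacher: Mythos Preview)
Your energy-method strategy via a zero-order pseudo-differential weight built from admissibility is the paper's approach: the paper takes $E=\Op^{\w}(e^p)$ with $p$ the symbol produced by Lemma~\ref{newDoilem} and works with the equivalent norm $N(u)^2=\|E\Lambda^s u\|_0^2+\|u\|_{s-2}^2$, which plays the role of your $(Ku,u)_0$. Two points deserve attention. First, your displayed differential inequality has the wrong sign: the commutator contributes a \emph{negative} term $-c\int\lambda(|x|)|\Lambda^{(m-1)/2}u|^2$ on the right (so that, after integration and Gr\"onwall, it moves to the left with the correct sign); with $\geq$ as written, Gr\"onwall yields a lower bound on $(Ku,u)(t)$ and the argument does not close. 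Second, your existence step differs from the paper's: you propose parabolic regularization by $-\epsilon\langle D\rangle^{2}$, but for $m=3$ this does \emph{not} dominate the order-$3$ operator $iA$, so well-posedness of the regularized problem is not immediate from analytic-semigroup theory (using $-\epsilon\langle D\rangle^{2m}$ fixes this, but must be said). The paper instead proves the dual estimates (i$'$)--(iii$'$) for $(\partial_t-iA)^*$ (the same admissibility machinery applies, since $-\re(\bar a)$ also satisfies \eqref{ellderivatives} and \eqref{smallcoeffa2a3}) and then uses Hahn--Banach plus a density/bootstrap to produce the solution, avoiding regularization entirely. Finally, a small correction to your diagnosis of the restriction $m\le 3$: it does not enter through the symbolic remainder in the commutator (in Weyl calculus that remainder is of order $m-3$, hence harmless), but through the sharp G\aa rding/Fefferman--Phong step, which for a nonnegative symbol of order $m-1$ yields an $H^{(m-3)/2}$ error---this is absorbable into $\|u\|_0^2$ precisely when $m\le 3$.
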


\begin{theorem}\label{thm.NLIVP}
    Let $A=\mathrm{Op}^\w(a) \in \Psi^3(\R^n)$ be a pseudo-differential operator with symbol $a\in S^3(\R^n)$ satisfying \eqref{ellderivatives}, \eqref{smallcoeffa2a3},\eqref{cond.a} and \eqref{cond.ima}. Let $N(u,\bar{u},D^\alpha u)$ be a nonlinearity of the form \eqref{Npqalpha}, and $\lambda(\abs{x}):=\langle x \rangle^{-N}$ with $N\in\mathbb{N}$ such that $N>1$. Let also $s \in 2\mathbb{N}+1$, with $s\geq n+4N+5$, and $u_0 \in \mathscr{S}(\R^n)$. Then there exists $T=T(\norm{u_0}_{H_x^s}, \norm{\lambda(\abs{x})^{-1}u_0}_{H_x^{s-2}})$ such that \eqref{NLIVP3} has a unique solution $u$ defined in the time interval $[0,T]$ and satisfying
    \[
    u \in C^\infty ([0,T];H^s(\R^n) \cap L^2(\R^n,\lambda(\abs{x})^{-1}dx)).
    \] 
    Moreover, for the Banach space  
\[
\begin{split}
X_T^s:=\Bigl \lbrace u:[0,T] \times \R^n \rightarrow \mathbb{C}; \ & \st\norm{u}_{ H^s_x} < +\infty, \;  \Bigl(\int_0^T \int_{\R^n}\lambda(\abs{x})\abs{\Lambda^{s+1}u}^2 \, dx \, dt\Bigr)^{1/2} < +\infty, \\
& \st\norm{\lambda (\abs{x})^{-1}u}_{ H^{s-2N-2}_x}  < +\infty, \,  \st\norm{ \lambda (\abs{x})^{-1} \partial_t u}_{ H_x^{s-2N-5}} <+\infty \Bigr \rbrace,
\end{split}
\]
with norm 
\[
\norm{u}^2_{X_{T}^s}:=\st\norm{u}_{H^s_x}^2+\int_0^T \int_{\R^n}\lambda(\abs{x})\abs{\Lambda^{s+1}u}^2 \, dx \, dt+\st\norm{\lambda (\abs{x})^{-1}u}_{ H^{s-2N-2}_x}^2+\st\norm{ \lambda (\abs{x})^{-1}\partial_t u}_{H_x^{s-2N-5}},
\]
we have the following continuity property:

for every $u_0 \in \mathscr{S}(\R^n)$ there exist a neighborhood $U_{u_0}$ of $u_0$ in $\mathscr{S}(\R^n)$, and a time $T'>0$, such that the map 
\[
U_{u_0} \ni v_0 \mapsto v \in X_{T'}^s,
\]
which associates to the initial datum $v_0$ the solution $v$ of \eqref{NLIVP3}, is continuous.
\end{theorem}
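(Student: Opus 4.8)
The plan is to solve \eqref{NLIVP3} by a contraction argument driven by the linear estimates of Theorem~\ref{thm.LIVP}, the decisive point being that for $m=3$ these estimates gain $\frac{m-1}{2}=1$ derivative, exactly enough to absorb the derivative appearing in the nonlinearity \eqref{Npqalpha}. Choosing $R$ a fixed multiple of $\norm{u_0}_{H^s_x}+\norm{\lambda(\abs{x})^{-1}u_0}_{H^{s-2}_x}$, let $B_R\subset X_T^s$ be the closed ball of radius $R$, and for $w\in B_R$ let $\Phi(w)=u$ be the solution, furnished by Theorem~\ref{thm.LIVP}, of the linear IVP $D_tu-Au=N(w,\bar w,D^\alpha w)$, $u(0)=u_0$. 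Once $\Phi$ is shown to map $B_R$ into itself and to be a contraction there for $T=T(R)$ small, its fixed point is a solution of \eqref{NLIVP3}, uniqueness is built into the contraction (or follows from an energy estimate for the difference of two solutions), and — since $R$, hence $T$, depends only on $\norm{u_0}_{H^s_x}$ and $\norm{\lambda(\abs{x})^{-1}u_0}_{H^{s-2}_x}$ — this is the asserted dependence of $T$; the regularity and continuity statements are obtained afterwards.

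\textbf{The $H^s$ and smoothing components.} For $\st\norm{u}_{H^s_x}$ and $\int_0^T\int\lambda(\abs{x})\abs{\Lambda^{s+1}u}^2\,dx\,dt$ we invoke the inhomogeneous estimate~(iii) of Theorem~\ref{thm.LIVP}, whose right-hand side needs the forcing in $\big(\int_0^T\int\lambda(\abs{x})^{-1}\abs{\Lambda^{s-1}N}^2\,dx\,dt\big)^{1/2}$. Writing $N$ as a finite sum of terms $c(x)w^p\bar w^qD^\alpha w$ with $p+q\ge1$, $\abs\alpha\le1$, and distributing $s-1$ derivatives by Leibniz/Moser, the borderline contribution $c(x)w^p\bar w^q\Lambda^{s-1}D^\alpha w$ has total order $s$ in $w$ (since $\abs\alpha\le1$) — precisely the derivative loss compensated by the gain of one derivative in~(iii), as $\norm{\Lambda^{s-1}D^\alpha w}_{L^2_x}\lesssim\norm{w}_{H^s_x}$. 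Its forcing norm is bounded, using $\lambda(\abs{x})^{-1/2}\abs{w}^{p+q}\le\lambda(\abs{x})^{p+q-1/2}\norm{\lambda(\abs{x})^{-1}w}_{L^\infty_{t,x}}^{p+q}\le\norm{\lambda(\abs{x})^{-1}w}_{L^\infty_{t,x}}^{p+q}$ (valid since $0<\lambda\le1$ and $p+q\ge1$) together with $\int_0^T\norm{w}_{H^s_x}^2\,dt\le T\,\st\norm{w}_{H^s_x}^2$, by $T^{1/2}\norm{\lambda(\abs{x})^{-1}w}_{L^\infty_{t,x}}^{p+q}\st\norm{w}_{H^s_x}$, and $\norm{\lambda(\abs{x})^{-1}w}_{L^\infty_{t,x}}$ is controlled by $\st\norm{\lambda(\abs{x})^{-1}w}_{H^{s-2N-2}_x}$ via $H^{s-2N-2}(\R^n)\hookrightarrow L^\infty(\R^n)$, which holds because $s-2N-2>n/2$ by $s\ge n+4N+5$. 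The non-borderline Leibniz terms carry fewer derivatives on the top factor and are treated the same way with a power $T^{1/2}$; so the forcing norm is $\lesssim T^{1/2}P(R)$ for a polynomial $P$, and~(iii) yields these two components of $\norm{u}_{X_T^s}$ bounded by $e^{C_1T}\big(\norm{u_0}_{H^s_x}^2+T\,P(R)^2\big)\le R^2/2$ for $T$ small.

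\textbf{Weighted components, contraction, regularity and continuity.} For $\st\norm{\lambda(\abs{x})^{-1}u}_{H^{s-2N-2}_x}$ — a level far below $s$, where no smoothing gain is needed — one conjugates: $v:=\langle x\rangle^Nu$ solves $D_tv-\widetilde Av=\langle x\rangle^NN(w,\bar w,D^\alpha w)$, $v(0)=\langle x\rangle^Nu_0$, with $\widetilde A=\langle x\rangle^NA\langle x\rangle^{-N}$ a third-order operator sharing the principal symbol of $A$, for which (by a perturbative argument, the conjugation contributing only lower-order terms with extra spatial decay, harmless for the energy estimate) estimate~(i) of Theorem~\ref{thm.LIVP} still applies; the forcing $\langle x\rangle^NN$ is estimated in $H^{s-2N-2}_x$ by a Moser bound involving the propagated quantity $\st\norm{\langle x\rangle^Nw}_{H^{s-2N-2}_x}$ and $\norm{w}_{H^{s-2N-1}_x}\le\norm{w}_{H^s_x}$, the loss $\abs\alpha\le1$ being afforded by the gap $2N+1$ to the top level, times $T$. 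The last component $\st\norm{\lambda(\abs{x})^{-1}\partial_tu}_{H^{s-2N-5}_x}$ follows from $\partial_tu=-iAu+iN(w,\bar w,D^\alpha w)$, losing three derivatives to $A$ — whence the exponent $s-2N-5$. The contraction estimate is analogous: $\Phi(w_1)-\Phi(w_2)$ solves a linear IVP with zero data and forcing $N(w_1,\bar w_1,D^\alpha w_1)-N(w_2,\bar w_2,D^\alpha w_2)$, which is a sum of terms linear in $D^\alpha(w_1-w_2)$ and in $w_1-w_2$ with coefficients polynomial in $w_1,w_2$; the same bounds, with an overall $T^{1/2}P(R)$, give $\norm{\Phi(w_1)-\Phi(w_2)}_{X_T^s}\le\tfrac12\norm{w_1-w_2}_{X_T^s}$ for $T$ small. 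Since $u_0\in\mathscr S(\R^n)$, the scheme runs at every regularity level and every power weight, so all such norms of the fixed point $u$ are finite; differentiating the equation in $t$ and using $\partial_tu=-iAu+iN$ iteratively upgrades $u$ to $C^\infty\big([0,T];H^s(\R^n)\cap L^2(\R^n,\lambda(\abs{x})^{-1}dx)\big)$. Finally, for $v_0$ in a small $\mathscr S$-neighborhood of $u_0$ the difference of the corresponding solutions solves a linear IVP with data $u_0-v_0$ and forcing $N(u,\dots)-N(v,\dots)$; Theorem~\ref{thm.LIVP} together with the nonlinear bounds above, on a common interval $[0,T']$ attached to a ball containing $u_0$, gives $\norm{u-v}_{X_{T'}^s}\lesssim\norm{u_0-v_0}_{H^s_x}+\norm{\lambda(\abs{x})^{-1}(u_0-v_0)}_{H^{s-2}_x}$, which is the claimed continuity.

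\textbf{Main obstacle.} The crux is the nonlinear estimate just sketched: three features must be reconciled — the gain of exactly one derivative in the smoothing estimates, which is all that is available and just enough for $m=3$ with $\abs\alpha\le1$; the weight mismatch between the $\lambda$ produced by that gain and the $\lambda^{-1}$ demanded by estimate~(iii), which closes only because the nonlinearity is at least quadratic ($p+q\ge1$) and because the low-regularity weighted control $\st\norm{\lambda(\abs{x})^{-1}u}_{H^{s-2N-2}_x}$, propagated through $\widetilde A$, supplies the decay $\abs u\lesssim\langle x\rangle^{-N}$; and the factor $T^{1/2}$ indispensable for the contraction, gained by measuring the top-order factor in $L^2_t$ rather than through the smoothing norm, which is affordable precisely because $s$ is taken large. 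The remaining bookkeeping — verifying that $\langle x\rangle^NA\langle x\rangle^{-N}$ still falls under (a perturbed version of) Theorem~\ref{thm.LIVP} and that the various derivative losses fit under the threshold $s\ge n+4N+5$ — is essentially routine but must be tracked with care.
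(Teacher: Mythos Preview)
Your argument has a genuine gap at the heart of the nonlinear estimate. The nonlinearity \eqref{Npqalpha} allows $\abs{\alpha}\le 2$, not $\abs{\alpha}\le 1$ as you assume throughout; and for $m=3$ the inhomogeneous estimate (iii) gains $m-1=2$ derivatives (from $\lambda^{-1}$-weighted $H^{s-1}$ on the forcing to $\lambda$-weighted $H^{s+1}$ on the solution), not one. When $\abs{\alpha}=2$ the borderline Leibniz term in $\Lambda^{s-1}N$ is $w^p\bar w^q\,\Lambda^{s-1}D^\alpha w\sim w^p\bar w^q\,\Lambda^{s+1}w$, which exceeds $H^s$ and therefore cannot be placed in $L^\infty_tH^s_x$; it must be controlled by the smoothing norm $\int_0^T\!\int\lambda\abs{\Lambda^{s+1}w}^2$. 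But that norm is already part of $\norm{w}_{X_T^s}$ and carries \emph{no} factor of $T$, so your mechanism $\int_0^T\norm{w}_{H^s_x}^2\,dt\le T\sup_t\norm{w}_{H^s_x}^2$ for producing the contraction smallness breaks down exactly at the top order. Concretely, $\int_0^T\!\int\lambda^{-1}\abs{w^p\bar w^q\Lambda^{s+1}w}^2\le \norm{\lambda^{-1}w^{p}\bar w^{q}}_{L^\infty_{t,x}}^2\int_0^T\!\int\lambda\abs{\Lambda^{s+1}w}^2$ gives a bound of size $P(R)$ with no $T$, and $\Phi$ need not map $B_R$ to itself.

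The paper closes this gap by a device you do not use: it writes $N=(u^p\bar u^q-u_0^p\bar u_0^q)D^\alpha u+u_0^p\bar u_0^qD^\alpha u$, absorbs the second, \emph{linear-in-$u$} term into a modified operator $\tilde A$ (still satisfying the hypotheses because $u_0\in\mathscr S$), and for the first term uses $u^p\bar u^q-u_0^p\bar u_0^q=\int_0^t\partial_s(u^p\bar u^q)\,ds$ to extract a factor $T$ \emph{on the coefficient}, so that the top-order piece becomes $\int_0^T\!\int\lambda\abs{\Lambda^{s+1}w}^2\cdot T^2\norm{\lambda^{-1}\partial_t(w^p\bar w^q)}_{L^\infty_{t,x}}^2$. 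This is precisely why $\st\norm{\lambda^{-1}\partial_tu}_{H^{s-2N-5}_x}$ is a component of the $X_T^s$ norm --- a component whose role your sketch does not explain (you derive it only a posteriori from the equation, but never feed it back into the nonlinear estimate). Your treatment of the weighted component via conjugation by $\langle x\rangle^N$ is plausible, but note that the paper instead proves and uses a direct commutation lemma for $\langle x\rangle^{2N}W(t)$; either way, the essential missing idea is the subtraction of the initial-data contribution from the nonlinearity.
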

 
Finally, the organization of the paper is as follows. 
In Section \ref{sec.preliminaries}, we recall the basics of the pseudo-differential calculus that we will use throughout the paper. 

Section \ref{sec.adm} is centered on the notion of \textit{admissible} operators. 
We will start with the proof of a new version of
Doi's lemma for operators of order $m\geq 2$, which will lead to the definition of admissible operators. Once this notion is introduced, we will establish sufficient conditions for an operator to be admissible in Proposition \ref{propsuffadmiss}. Finally, we will exhibit some examples of operators that satisfy these conditions, both in the constant and variable coefficient case.

In Section \ref{sec.smoothing.estimates}, we use the a priori estimates of Lemma \ref{lem.smoothing} to prove Theorem \ref{thm.LIVP} for a class of operators of the form $D_t-A$, where $A$ is a pseudo-differential operator of order $m=2,3,$ with admissible real part (the admissibility of the real part $\re(a)$ of the symbol comes from the fact that the symbol satisfies  \eqref{ellderivatives} and \eqref{smallcoeffa2a3}) and with imaginary part satisfying an appropriate smallness assumption.
In particular, Theorem \ref{thm.LIVP} provides the local well-posedness in $H^s$ of \eqref{LIVP} under suitable regularity assumptions on the forcing term, as well as the smoothing estimates for the solution.

In Section \ref{sec.NLIVP} we prove the local well-posedness of the NLIVP \eqref{NLIVP3} stated in Theorem \ref{thm.NLIVP}. The proof is based on the smoothing estimates obtained in Section \ref{sec.smoothing.estimates} and on the standard contraction argument. 

Finally, in Section \ref{sec.nontrapping} we study the non-trapping phenomenon associated with the bicharacteristic curves of the principal symbol of the operators under discussion.

\section{Preliminaries}\label{sec.preliminaries}
\noindent \textbf{Notations:} Below we shall use the notation $(\cdot,\cdot)_0$ for the $L^2$-inner product, and $\|\cdot\|_{s}$ for the standard $H^s(\R^n)$-Sobolev norm. 
Also, given an operator $P$ and two functions $f$ and $g$, we will write $(Pf)g$ when the operator $P$ applies to $f$ only, and $Pfg$ when $P$ applies to $fg$. Moreover, we denote by $\mathbb{N}_0:=\mathbb{N}\cup \lbrace 0 \rbrace$ and by $C^\infty_b(\R^n)$ the set of bounded $C^\infty$ function on $\R^n$. Finally, for every $A,B\in\R$ positive, we will write $A\lesssim (\gtrsim) B$ whenever there exists a positive constant $C$ such that $A\leq (\geq) CB$.

In this section, we recall the main properties of (global) pseudo-differential operators on $\R^n$ and the asymptotic formulas building the Weyl pseudo-differential calculus. For a complete and detailed exposition of the topic, and for  the proofs of the results presented hereafter, we refer the interested reader to \cite{Ho3} and \cite{Le10}. 

We start with the definition of the so-called \textit{standard} symbol class $S^m(\R^n):=S_{1,0}^m(\R^n)$.

\begin{definition}
    Let $m \in \R$ and $a\in C^\infty(\R^n \times \R^n)$. We say that $a \in S^{m}(\R^n)$, that is that \textit{$a$ is a symbol of order $m$}, if for all $\alpha,\beta \in \mathbb{N}_0^n$ there exists a constant $C_{\alpha,\beta}>0$ such that
\[
|\partial_x^\beta\partial_\xi^\alpha a(x,\xi)|\leq C_{\alpha,\beta}\langle \xi\rangle^{m-|\alpha|},\quad  \forall (x,\xi)\in\R^n \times \R^n.
\]
Moreover, $S^{m}(\R^n)$ equipped with the seminorms
\begin{align}
    \gamma_{k,m}(a)=\sup_{(x,\xi)\in\R^n \times \R^n, |\alpha|+|\beta|\leq k} |(\partial_x^\beta\partial_\xi^\alpha a)(x,\xi)|\langle \xi\rangle^{-m+|\alpha|},\quad k\in\mathbb{N},
\end{align}
is a Frechét space.
\end{definition}

Given a symbol $a\in S^m(\R^n), m\in \R$, we define a pseudo-differential operator associated with $a$ by using some formulas called \textit{quantization formulas}. Different quantizations, i.e. quantization formulas, lead – in general, except for symbols independent of $x$ –  to different operators. On the other hand, different symbols can give the same operator in different quantizations. The most popular and used quantizations are the Kohn-Nirenberg and the Weyl quantization, both belonging to the following one-parameter family of quantizations called \textit{$t$-quantizations} 
\begin{align}
 \mathrm{Op}_t(a)u(x)= (2\pi)^{-n}\int_{\R^n \times \R^n}e^{i(x-y) \cdot \xi} a((1-t)x+ty,\xi)u(y)dyd\xi, \quad u \in \mathscr{S}(\R^n), \quad  t\in[0,1].   
\end{align}
When $t=0$ in the previous formula we get  the Kohn-Nirenberg quantization of $a$ (that we denote by $\mathrm{Op}(a)=\mathrm{Op}_0(a)$), while when $t=1/2$  we get the Weyl one (that we denote by $\Op^{\mathrm{w}}(a)=\Op_{1/2}(a)$). 
 In this work we will use the Weyl quantization, therefore we will deal with Weyl-quantized pseudo-differential operators. 

\begin{definition}
We call \textit{Weyl-quantized pseudo-differential operator} with symbol $a \in S^m(\R^n)$, $m\in\R$,  the oscillatory integral
\[
\mathrm{Op}^\w(a)u(x)= (2\pi)^{-n}\int_{\R^n \times \R^n}e^{i(x-y) \cdot \xi} a((x+y)/2,\xi)u(y)dyd\xi, \quad u \in \mathscr{S}(\R^n).
\]

In addition, we say that an operator \textit{$A$ is a pseudo-differential operator of order $m$}, that is $A\in \Psi^m(\R^n)$, if there exists $a \in S^m(\R^{n})$ such that $A=\mathrm{Op}^\w(a)$.
\end{definition}

Our choice to use the Weyl quantization is motivated by the convenient properties of the Weyl pseudo-differential calculus, properties that we will briefly recall below. In order to distinguish the Weyl from the Kohn-Nirenberg symbol, we will refer to the symbol of a Weyl-quantized pseudo-differential operator $A$ as the \textit{Weyl symbol}. 

\begin{theorem}\label{thm.wey.adjoint}
Let $A=\Op^\w(a) \in \Psi^m(\R^n)$, $m \in \R$. Then the formal adjoint $A^\ast$ of $A$ belongs to the class $\Psi^m(\R^n)$ and $A^*=\Op^\w(\bar{a})$.
In particular, if $a$ is real valued, then the corresponding Weyl-quantized operator is formally self-adjoint.
\end{theorem}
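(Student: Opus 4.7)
The plan is to verify the identity $A^* = \Op^w(\bar{a})$ by comparing Schwartz kernels, which is the cleanest route for the Weyl quantization because of the symmetric midpoint $(x+y)/2$. By definition, the formal adjoint $A^*$ is characterized by $(Au,v)_0 = (u,A^*v)_0$ for all $u,v \in \mathscr{S}(\R^n)$, so equivalently its distributional kernel $K^*$ satisfies $K^*(x,y) = \overline{K(y,x)}$, where $K$ is the kernel of $A$.

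First I would write the kernel of $A = \Op^w(a)$ as the oscillatory integral
\begin{equation}
K(x,y) = (2\pi)^{-n} \int_{\R^n} e^{i(x-y)\cdot \xi}\, a\bigl((x+y)/2,\xi\bigr)\, d\xi,
\end{equation}
which defines an element of $\mathscr{S}'(\R^n \times \R^n)$ for $a \in S^m(\R^n)$. Then I would compute
\begin{equation}
K^*(x,y) = \overline{K(y,x)} = (2\pi)^{-n} \int_{\R^n} e^{-i(y-x)\cdot \xi}\, \overline{a\bigl((x+y)/2,\xi\bigr)}\, d\xi,
\end{equation}
and perform the change of variables $\xi \mapsto -\xi$ (which preserves the oscillatory integral since $d\xi$ is unchanged up to sign but the domain is all of $\R^n$) to rewrite this as
\begin{equation}
K^*(x,y) = (2\pi)^{-n} \int_{\R^n} e^{i(x-y)\cdot \xi}\, \overline{a\bigl((x+y)/2,\xi\bigr)}\, d\xi.
\end{equation}
This is precisely the Weyl kernel associated to the symbol $\bar{a}$, i.e., the kernel of $\Op^w(\bar{a})$. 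Since a pseudo-differential operator is uniquely determined by its kernel, I conclude $A^* = \Op^w(\bar{a}) \in \Psi^m(\R^n)$.

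The second assertion is immediate: if $a$ is real valued then $\bar{a} = a$, so $A^* = \Op^w(a) = A$, i.e., $A$ is formally self-adjoint. No substantial obstacle is expected here; the only care needed is to treat the defining integrals as oscillatory integrals (or, equivalently, to pair both sides against test functions in $\mathscr{S}(\R^n \times \R^n)$ and apply Fubini after regularization), a standard procedure in the references cited in this section. The key structural point being used is that the Weyl quantization evaluates the symbol at the midpoint, which is symmetric under $(x,y) \mapsto (y,x)$; this is exactly what makes conjugation of the kernel coincide with conjugation of the symbol, avoiding any asymptotic correction terms that would appear for the Kohn--Nirenberg quantization.
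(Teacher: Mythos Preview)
Your kernel-comparison argument is the standard route and is correct in substance; note, however, that the paper does not actually prove this theorem---it is stated in Section~\ref{sec.preliminaries} as a preliminary fact from the Weyl calculus, with proofs deferred to the references \cite{Ho3} and \cite{Le10}. So there is no ``paper's own proof'' to compare against, and what you have written is essentially the textbook argument.

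One small expository slip: the change of variables $\xi\mapsto-\xi$ is unnecessary and, if actually carried out, would replace $\overline{a((x+y)/2,\xi)}$ by $\overline{a((x+y)/2,-\xi)}$, which is not what you want. The point is simply that $e^{-i(y-x)\cdot\xi}=e^{i(x-y)\cdot\xi}$, so your second displayed formula for $K^*(x,y)$ is already the Weyl kernel of $\Op^\w(\bar a)$ with no further manipulation. Deleting the sentence about the substitution fixes this.
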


For the composition of Weyl-quantized pseudo-differential operators, the following asymptotic formula holds.

\begin{theorem}\label{thm.weyl.comp}
Let $A=\Op^\w(a) \in \Psi^{m_1}(\R^n)$ and $B = \Op^\w(b) \in \Psi^{m_2}(\R^n)$, with $m_1,m_2 \in \R$. Then there exists a uniquely determined symbol $a\# b\in S^{m_1+m_2}(\R^n)$ satisfying
$A \circ B=\Op^\w(a\# b)$ and such that
\begin{align}\label{Weyl.comp.formula}
    (a\# b)(x,\xi)\sim\sum_{\substack{k\geq 0}}2^{-k}\sum_{\substack{|\alpha|+|\beta|=k}}\frac{(-i)^{|\alpha|+|\beta!}}{\alpha!\beta!}(-1)^{|\beta|}(\partial_\xi^\alpha\partial_x^\beta a)(\partial_\xi^\beta\partial_x^\alpha b),
\end{align}
in the sense that, for all $N\in\mathbb{N}$,
\begin{align}
  (a\# b)(x,\xi)-\sum_{\substack{0\leq k\leq N}}2^{-k}\sum_{\substack{|\alpha|+|\beta|=k}}\frac{(-i)^{|\alpha|+|\beta!}}{\alpha!\beta!}(-1)^{|\beta|}(\partial_\xi^\alpha\partial_x^\beta a)(\partial_\xi^\beta\partial_x^\alpha b)  \in S^{m_1+m_2-N-1}(\R^n).
\end{align}
\end{theorem}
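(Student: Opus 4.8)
The plan is to reduce the composition $A\circ B$ to a single oscillatory integral over $\R^{4n}$ with a nondegenerate quadratic phase, and then read off \eqref{Weyl.comp.formula} from its stationary‑phase expansion. First I would write the Schwartz kernels of $\Op^\w(a)$ and $\Op^\w(b)$ as oscillatory integrals, compose them, and invert the symbol‑to‑kernel map to extract the would‑be Weyl symbol $c$ of $A\circ B$. After a sequence of linear changes of variables (and a routine regularization argument to justify manipulating the — only conditionally convergent — integrals), this brings $c$ into the normal form
\[
c(x,\xi)=\pi^{-2n}\iint e^{-2i\sigma((y,\eta),(z,\zeta))}\,a\big((x,\xi)+(y,\eta)\big)\,b\big((x,\xi)+(z,\zeta)\big)\,dy\,d\eta\,dz\,d\zeta,
\]
where $\sigma$ is the standard symplectic form on $\R^{2n}\times\R^{2n}$; the phase is a real quadratic form in $(y,\eta,z,\zeta)$, nondegenerate and of signature $0$, with its only stationary point at the origin. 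That $c\in S^{m_1+m_2}(\R^n)$ is part of the general symbol calculus, and uniqueness of $a\#b$ is immediate, since $\Op^\w$ is injective on tempered symbols.

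Next I would apply the stationary‑phase lemma with remainder to the quadratic phase above. Since the phase is exactly quadratic, the expansion takes the form $c=\sum_{k=0}^{N}\frac1{k!}\big(-\tfrac i2\sigma(D_{(y,\eta)},D_{(z,\zeta)})\big)^{k}\big[\,a((x,\xi)+(y,\eta))\,b((x,\xi)+(z,\zeta))\,\big]\big|_{(y,\eta)=(z,\zeta)=0}+r_N$, with the sign convention inside the exponential fixed so as to match the oscillatory integral $e^{i(x-y)\cdot\xi}$. Expanding the differential operator $\sigma(D_{(y,\eta)},D_{(z,\zeta)})=D_\eta\cdot D_z-D_y\cdot D_\zeta$ by the binomial/multinomial theorem, restricting to the origin, and relabelling derivatives produces precisely the double sum over $\alpha,\beta$ in \eqref{Weyl.comp.formula}: the factor $2^{-k}$ comes from $(\tfrac12)^k$, the $1/(\alpha!\beta!)$ emerges once the $1/k!$ of the exponential series is cancelled against the multinomial coefficients, and the signs $(-i)^{|\alpha|+|\beta|}(-1)^{|\beta|}$ come from $(-i/2)^k$ together with the minus sign in front of $D_y\cdot D_\zeta$. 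Each such term visibly belongs to $S^{m_1+m_2-k}(\R^n)$ by the Leibniz rule and the symbol bounds on $a$ and $b$.

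The real work is in controlling the remainder $r_N$, and this is the step I expect to be the main obstacle. The idea is to replace the amplitude $a((x,\xi)+(y,\eta))\,b((x,\xi)+(z,\zeta))$ by its Taylor polynomial of order $N$ in $(y,\eta,z,\zeta)$ about the origin plus an integral remainder; the polynomial part reproduces the terms $k\le N$ above, while each remainder monomial yields an oscillatory integral of the same shape as the normal form, but with $a\cdot b$ replaced by a product of shifted derivatives of $a$ and $b$ of total order $N+1$. On these integrals one integrates by parts with a first‑order operator $L$ satisfying $L\,e^{-2i\sigma(\cdot)}=e^{-2i\sigma(\cdot)}$, of the form $\langle(y,\eta,z,\zeta)\rangle^{-2}$ times a constant‑coefficient second‑order differential operator built from the gradient of the quadratic phase, in order to manufacture arbitrary polynomial decay in the integration variables. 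Combining this decay with the symbol estimates for $\partial a$, $\partial b$ on the shifted arguments and the Peetre inequality $\langle\xi+\eta\rangle^{s}\lesssim\langle\xi\rangle^{s}\langle\eta\rangle^{|s|}$ shows that $r_N$, together with all its $(x,\xi)$‑derivatives, is $O(\langle\xi\rangle^{m_1+m_2-N-1})$, i.e. $r_N\in S^{m_1+m_2-N-1}(\R^n)$. The delicate part throughout is assigning a rigorous meaning to the $4n$‑dimensional oscillatory integral and keeping the bookkeeping tight enough that truncating the Taylor expansion at order $N$ gains exactly $N+1$ orders, uniformly in $(x,\xi)$; this is carried out in full detail in \cite{Ho3, Le10}.
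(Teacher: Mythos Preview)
The paper does not prove this theorem; immediately after the statement it writes ``For the proof of Theorem \ref{thm.weyl.comp} see \cite{Le10} or \cite{FRO}.'' Your sketch is the standard oscillatory--integral/stationary--phase argument that one finds in those references (in particular in \cite{Le10}), so it is consistent with what the paper invokes, and there is nothing to compare beyond noting that the paper treats the result as background.
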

For the proof of Theorem \ref{thm.weyl.comp} see \cite{Le10} or \cite{FRO}.

Next, we state some continuity properties of
pseudo-differential operators and some a priori estimates, specifically the sharp G\aa rding and the Fefferman-Phong inequality, that we will use in the proof of Lemma \ref{lem.smoothing}. We will work with Weyl symbols and quantizations, but the same properties can be stated, for instance, in the Kohn-Nirenberg framework.

\begin{theorem}
    For any given $s,m\in \R$ and $a\in S^m(\R^{n})$, the operator $\mathrm{Op}^\w(a)$ is bounded from $H^{s+m}(\R^n)$ to $H^{s}(\R^n)$.
\end{theorem}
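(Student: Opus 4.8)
The plan is to reduce the statement to the $L^2$-continuity of operators of order zero, and then to prove that by an almost-orthogonality argument.

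\emph{Step 1 (reduction to order zero).} Set $\Lambda^r:=\mathrm{Op}^\w(\langle\xi\rangle^r)$ for $r\in\R$; since the symbol $\langle\xi\rangle^r$ is independent of $x$, $\Lambda^r$ is the Fourier multiplier by $\langle\xi\rangle^r$ (the Bessel potential $\langle D\rangle^r$) and, directly from the definition of the Sobolev scale, it is an isomorphism $H^t(\R^n)\to H^{t-r}(\R^n)$ for every $t\in\R$, with inverse $\Lambda^{-r}$. Hence $\mathrm{Op}^\w(a)\colon H^{s+m}\to H^s$ is bounded if and only if $B:=\Lambda^{s}\circ\mathrm{Op}^\w(a)\circ\Lambda^{-s-m}\colon L^2(\R^n)\to L^2(\R^n)$ is bounded. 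By two applications of the Weyl composition formula (Theorem \ref{thm.weyl.comp}) one has $B=\mathrm{Op}^\w(b)$ with $b=\langle\xi\rangle^{s}\#\,a\,\#\,\langle\xi\rangle^{-s-m}$, which lies in $S^{\,s+m+(-s-m)}(\R^n)=S^0(\R^n)$, and finitely many seminorms of $b$ are controlled by finitely many seminorms of $a$. It therefore suffices to show that $\mathrm{Op}^\w(b)$ is bounded on $L^2(\R^n)$ for every $b\in S^0(\R^n)$.

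\emph{Step 2 ($L^2$-boundedness for $b\in S^0$).} From $|\partial_\xi^\alpha\partial_x^\beta b|\le C_{\alpha\beta}\langle\xi\rangle^{-|\alpha|}\le C_{\alpha\beta}$ we see that all derivatives of $b$ are bounded. Fix $\chi\in C_c^\infty(\R^{2n})$ supported in the unit cube with $\sum_{\mu\in\mathbb{Z}^{2n}}\chi(\,\cdot-\mu)\equiv1$, and put $b_\mu(x,\xi):=b(x,\xi)\,\chi\big((x,\xi)-\mu\big)$, so that $b=\sum_\mu b_\mu$, each $b_\mu$ is supported in a translate of the unit cube, and the seminorms of $b_\mu$ are bounded uniformly in $\mu$. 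Writing $\mu=(\mu_1,\mu_2)\in\mathbb{Z}^n\times\mathbb{Z}^n$, the Schwartz kernel of $T_\mu:=\mathrm{Op}^\w(b_\mu)$,
\[
K_\mu(x,y)=(2\pi)^{-n}\int_{\R^n}e^{i(x-y)\cdot\xi}\,b_\mu\big((x+y)/2,\xi\big)\,d\xi,
\]
is supported in $\{|x+y-2\mu_1|\lesssim1\}$ and, integrating by parts in $\xi$ (the $\xi$-support has size $O(1)$ and all $\xi$-derivatives of $b_\mu$ are bounded uniformly in $\mu$), satisfies $|K_\mu(x,y)|\le C_M\langle x-y\rangle^{-M}$ for every $M$, with $C_M$ independent of $\mu$; Schur's test then yields $\sup_\mu\|T_\mu\|_{L^2\to L^2}<\infty$. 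Since the phase-space supports of $b_\mu$ and $b_\nu$ lie at distance comparable to $|\mu-\nu|$, a non-stationary-phase analysis of the integrals defining the composed symbols (or kernels) of $T_\mu^\ast T_\nu$ and $T_\mu T_\nu^\ast$ gives the almost-orthogonality bounds
\[
\|T_\mu^\ast T_\nu\|_{L^2\to L^2}+\|T_\mu T_\nu^\ast\|_{L^2\to L^2}\le C_M\,\langle\mu-\nu\rangle^{-M}\qquad\text{for every }M.
\]
Taking $M>4n$ and invoking the Cotlar--Stein lemma we obtain $\|\mathrm{Op}^\w(b)\|_{L^2\to L^2}\le\sup_\mu\sum_\nu\big(C_M\langle\mu-\nu\rangle^{-M}\big)^{1/2}<\infty$, which completes the argument. (This is the classical Calder\'on--Vaillancourt theorem; full details may be found in \cite{Ho3,Le10}.)

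\emph{Main obstacle.} The only genuinely technical step is the almost-orthogonality estimate in Step 2, i.e.\ the rapid off-diagonal decay of $T_\mu^\ast T_\nu$ and $T_\mu T_\nu^\ast$ uniformly in $\mu,\nu$; everything else — the conjugation in Step 1, the uniform Schur bound for the diagonal blocks, and the final summation — is routine. In the context of this paper one could of course simply cite \cite{Ho3,Le10} and dispense with Steps 1--2 altogether.
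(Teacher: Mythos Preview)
The paper does not prove this theorem: it is stated in Section~\ref{sec.preliminaries} as a preliminary fact, with the reader referred to \cite{Ho3,Le10} for proofs (as you yourself note in your final remark). There is therefore no ``paper's own proof'' to compare against.

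Your argument is correct and is the standard route: conjugate by Bessel potentials to reduce to order zero, then invoke Calder\'on--Vaillancourt via a Cotlar--Stein almost-orthogonality decomposition. One small comment: in Step~2 you claim the almost-orthogonality bounds for $T_\mu^\ast T_\nu$ and $T_\mu T_\nu^\ast$ without writing out the non-stationary-phase computation, which is indeed the only genuinely technical point (as you acknowledge). For a self-contained write-up you would need to make this explicit, but since the paper treats the result as background and cites the references you mention, your level of detail is already more than what the paper provides.
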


\begin{theorem}[Sharp G\aa rding and Fefferman-Phong inequality]\label{thm.SGFP}
    Let $a\in S^m(\R^n)$, $m\in \R$. Then,
    \begin{itemize}
        \item if $\re(a)(x,\xi)\geq 0$ for all $(x,\xi)\in\R^n \times \R^n$, then there exists $C>0$ such that
        \[ \re (Op^\w(a)u,u)_0\geq -C\|u\|^2_{\frac{m-1}{2}},\quad \forall u\in\mathscr{S}(\R^n) \quad (\textit{Sharp G\aa rding Inequality});\]
       \item if $a\in S^m(\R^n)$ is real valued, and $a(x,\xi)\geq 0$ for all $(x,\xi)\in\R^n \times \R^n$, then there exists $C>0$ such that
        \[ \re (Op^\w(a)u,u)_0\geq -C\|u\|^2_{\frac{m-2}{2}},\quad \forall u\in\mathscr{S}(\R^n) \quad (\textit{Fefferman-Phong Inequality}).\]
    \end{itemize}
\end{theorem}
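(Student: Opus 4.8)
The plan is to reduce both inequalities to real nonnegative symbols, to dispatch the sharp G\aa rding inequality by a Friedrichs symmetrization (equivalently, Wick quantization) argument, and then to treat the Fefferman--Phong inequality --- the genuinely hard one --- via the Fefferman--Phong decomposition algorithm. First I would reduce to real symbols: writing $a=\re(a)+i\,\im(a)$ with $\im(a)$ real, Theorem~\ref{thm.wey.adjoint} makes both $\Op^\w(\re a)$ and $\Op^\w(\im a)$ formally self-adjoint, so $(\Op^\w(\re a)u,u)$ and $(\Op^\w(\im a)u,u)$ are real and
\[
\re\,(\Op^\w(a)u,u)=(\Op^\w(\re a)\,u,u).
\]
Hence part~(i) amounts to bounding $(\Op^\w(b)u,u)$ from below with $b=\re(a)\in S^m(\R^n)$ real and $b\ge 0$, and part~(ii) likewise with $b=a$. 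Since $\|u\|_{\frac{m-2}{2}}\le\|u\|_{\frac{m-1}{2}}$, the Fefferman--Phong bound for real nonnegative symbols already implies the sharp G\aa rding bound after this reduction, so only (ii) requires a new idea; nonetheless I would present (i) first, being far more elementary.

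For (i) I would use the Wick (anti-Wick) quantization. Fix a normalized real Gaussian $\varphi$ and form coherent states $\varphi_Y$, $Y=(y,\eta)$, localized in a box of size $\langle\eta\rangle^{-1/2}$ in $x$ and $\langle\eta\rangle^{1/2}$ in $\xi$ --- the critical scale for the class $S^m(\R^n)$ --- and set $\Op^{\mathrm{Wick}}(b)u=(2\pi)^{-n}\int_{\R^{2n}} b(Y)(u,\varphi_Y)\varphi_Y\,dY/c_Y$, with $c_Y$ a normalization so that $\Op^{\mathrm{Wick}}(1)=\mathrm{Id}$. Then $b\ge 0$ gives $(\Op^{\mathrm{Wick}}(b)u,u)=(2\pi)^{-n}\int b(Y)|(u,\varphi_Y)|^2\,dY/c_Y\ge 0$ for free, while the calculus identity $\Op^{\mathrm{Wick}}(b)=\Op^\w(b)+\Op^\w(r)$ with $r\in S^{m-1}(\R^n)$ --- coming from the Taylor expansion of the Gaussian average of $b$ at the critical scale, whose leading error is a box-Laplacian of $b$ lying in $S^{m-1}(\R^n)$, the remainder being lower order --- yields
\[
(\Op^\w(b)u,u)=(\Op^{\mathrm{Wick}}(b)u,u)-(\Op^\w(r)u,u)\ge -\abs{(\Op^\w(r)u,u)}\ge -C\|u\|_{\frac{m-1}{2}}^2,
\]
using the $H^{\frac{m-1}{2}}\to H^{-\frac{m-1}{2}}$ boundedness of $\Op^\w(r)$. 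Alternatively one can argue symbolically, with a partition of unity $\sum_k\chi_k^2=1$ adapted to the same boxes, each block $\chi_k b\chi_k$ being nonnegative and the aggregate differing from $b$ by a symbol of order $m-1$ (after one elementary iteration to upgrade a half-order gain to a full order).

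For (ii) --- the genuinely hard inequality --- the plan is to follow the Fefferman--Phong algorithm. One performs a stopping-time / Calder\'on--Zygmund-type decomposition of phase space into boxes adapted to the critical metric, arranged so that on each box $Q$ either (a) the nonnegative symbol $b$ together with enough of its derivatives is controlled from above by the box-size $\mu_Q$ --- a ``good'' box, on which, after rescaling $Q$ to the unit cube, one extracts a $C^{1,1}$ square root $\sqrt{b}$ of $b$ modulo an $O(\mu_Q)$ error and concludes $\Op^\w(\chi_Q b\chi_Q)\ge -C\mu_Q$ by applying the sharp G\aa rding inequality already proven in (i) to $(\sqrt{b})^2$ --- or (b) $b$ is genuinely large on $Q$, in which case positivity is simply discarded and $\chi_Q b\chi_Q$ estimated crudely, the point being that the combinatorial bound on the number of bad boxes keeps $\sum_{\mathrm{bad}\ Q}\mu_Q$ finite. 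Summing the block estimates together with the almost-orthogonality of the $\chi_Q$ gives $(\Op^\w(b)u,u)\ge -C\sum_Q\mu_Q\|\chi_Q u\|^2\ge -C\|u\|_{\frac{m-2}{2}}^2$. In practice I would organize all of this through Lerner's Wick calculus for admissible metrics combined with a Fefferman--Phong partition of the symbol (see~\cite{Le10}), or via the treatments of Bony or Tataru, rather than reproduce the full argument.

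The hard part will be (ii): controlling the geometry and combinatorics of the Calder\'on--Zygmund decomposition --- in particular the bound on the number of bad boxes --- and coping with the fact that $\sqrt{b}$ is only $C^{1,1}$, which rules out a naive symbolic quantization and forces the box-by-box square-root extraction. By contrast, the reduction to real symbols, the sharp G\aa rding step, and the assembly of the block bounds are all routine consequences of the Weyl pseudodifferential calculus recalled in Section~\ref{sec.preliminaries}.
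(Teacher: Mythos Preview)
Your proposal is a correct and standard outline of how these inequalities are proved: the reduction to real symbols via Theorem~\ref{thm.wey.adjoint} is valid, the Wick/anti-Wick positivity argument for sharp G\aa rding is the usual one (and the alternative Friedrichs symmetrization you mention works equally well), and your description of the Fefferman--Phong stopping-time decomposition with $C^{1,1}$ square-root extraction matches the original argument of~\cite{FP78} and its modern presentations in~\cite{Le10}.

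That said, you should be aware that the paper itself does \emph{not} prove this theorem: immediately after the statement it simply refers the reader to~\cite{Ho3}, \cite{HoUM} and~\cite{FP78}. So there is no ``paper's own proof'' to compare against --- this is a classical result quoted as background. Your sketch goes well beyond what the paper supplies, and is consistent with the references it cites.

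One small caveat on your write-up: in the Fefferman--Phong step you say that on good boxes one applies sharp G\aa rding to $(\sqrt{b})^2$; strictly speaking the gain of two orders does not come from applying sharp G\aa rding to a square (which would still lose one order) but rather from the fact that after localization and rescaling the residual symbol is genuinely of order $m-2$, or equivalently from an induction on dimension combined with the $C^{1,1}$ regularity of $\sqrt{b}$. This is a point of presentation rather than a gap, but it is exactly where the subtlety lies, as you yourself flag at the end.
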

For the proof of Theorem \ref{thm.SGFP} see \cite{Ho3}, \cite{HoUM} and \cite{FP78}.

Now, we define the class of classical pseudo-differential operators which includes differential operators.

\begin{definition}
    A function $a \in C^\infty(\R^n \times (\R^n\setminus \lbrace 0 \rbrace)$ is called \textit{positively homogeneous of order $m \in \R$} if for all $\lambda>0$ 
    \[
    a(x,\lambda\xi)=\lambda^ma(x,\xi), \quad \forall (x,\xi) \in \R^n \times (\R^n \setminus \lbrace 0 \rbrace).
    \]
\end{definition}
\begin{definition}\label{homsym}
    \textit{The space of homogeneous symbols of order $m$}, denoted $S^m_{\mathrm{hom}}(\R^n)$, is the space of all functions $a \in C^\infty(\R^n \times (\R^n\setminus \{0\}))$, positively homogeneous of order $m$, such that, for all $\alpha,\beta \in \mathbb{N}_0^n$, 
\[
|\partial_x^\beta\partial_\xi^\alpha a(x,\xi)|\leq C_{\alpha,\beta}\abs{ \xi}^{m-|\alpha|},\quad  \forall (x,\xi)\in \R^n \times (\R^n \setminus \lbrace 0 \rbrace),
\]
for some constant $C_{\alpha,\beta}>0$.
\end{definition}

\begin{definition}\label{classical}
A symbol $a\in S^m(\R^n)$ is called \textit{a classical symbol of order $m \in \R$} if, for all $j \in \mathbb{N}_0$, there exists a homogeneous symbol $a_{m-j}\in S^{m-j}_{\mathrm{hom}}(\R^n) $ such that
\begin{equation}\label{assum}
a \sim \sum_j (1-\chi)a_{m-j},
\end{equation}
that is, for all $N \in \mathbb{N}_0$,
\[
a-\sum_{j=0}^{N}(1-\chi)a_{m-j} \in S^{m-N-1}(\R^n),
\]
where, $\chi=\chi(\xi)$ is a fixed cut-off function which is identically $1$ near 0, that is,  $\chi \in C_c^\infty(\R_\xi^n)$, $0 \leq \chi \leq 1$ and, for instance, $\chi \equiv 1$ when $\abs{\xi}\leq 1/2$ and $\chi \equiv 0$ when $\abs{\xi}\geq 1$. If $a \in S^m(\R^n)$ is a classical symbol we write $a \in S^m_{\mathrm{cl}}(\R^n)$. Moreover, when $a \in S^m_{\mathrm{cl}}(\R^n)$ we say that  $A=\Op^\w(a)$ is a \textit{classical pseudo-differential operator}, and write $A=Op^\w(a) \in \Psi_{\mathrm{cl}}^m(\R^n)$.
\end{definition}
\begin{remark}\label{nocutoff}
    The use of the cut-off function $\chi$ in the previous definition is necessary. For a function to be a symbol we need the function to be $C^\infty$ while, in general, a homogeneous function is not. However, since we are interested in the behavior of such functions for $\abs{\xi} \geq 1$, we sometimes omit the cut-off function and, instead of \eqref{assum}, we write
    \[
    a \sim \sum_j a_{m-j}.
    \]
\end{remark}

Our next aim is to define the principal symbol of a pseudo-differential operator, which will lead to the notion of elliptic symbol and elliptic operator. 
To define the principal symbol  we need to recall that for all $m \in \mathbb{R}$ there exists a map
\[
\sigma_m:\Psi^{m}(\R^n)\rightarrow S^m(\R^n)/S^{m-1}(\R^n)
\]
such that 
\[
0 \rightarrow \Psi^{m-1}\rightarrow\Psi^{m}\xrightarrow{\sigma_m} S^m(\R^n)/S^{m-1}(\R^n)\rightarrow 0
\]
is a short exact sequence. Therefore,  the image $\sigma_m(A)=[a]$, where $a$ is the Weyl symbol of $A$ (see for instance H\"ormander \cite{Ho3}, p. 86).

\begin{definition}
Let $A \in \Psi^m(\R^n)$. \textit{The principal symbol of $A$} is defined as $\sigma_m(A) \in S^m(\R^n)/S^{m-1}(\R^n)$.
\end{definition}
\begin{remark}\label{classprinc}
    If $A=\Op^\w(a) \in \Psi^m_{\mathrm{cl}}(\R^n)$ is a classical pseudo-differential operator, and
    \[
    a \sim \sum_{j}a_{m-j},
    \]
    (here we have used the convention in Remark \ref{nocutoff}), then we identify the principal symbol of $A$ with the leading part $a_m\in S^m(\R^n)$ in the asymptotic expansion.
\end{remark}

\begin{definition}
     A symbol $a \in S^m(\R^n)$ is said to be \textit{(uniformly) elliptic} if there exist some constants $c,C>0$ such that 
\[
\abs{\xi}\geq C \Longrightarrow \abs{a(x,\xi)} \geq c \langle \xi\rangle^m. 
\]
\end{definition}
\begin{definition}\label{def.ellipt.op}
Let $A \in \Psi^m(\R^n)$. The operator $A$ is said to be \textit{(uniformly) elliptic} if its principal symbol $\sigma_m(A)$ is elliptic.
\end{definition}
Since we will refer to symbols in different quantizations, we want to stress that Definition \ref{def.ellipt.op} does not depend on the choice of the quantization.

We conclude this section with a few more definitions that will appear throughout the paper.

\begin{definition}
    \textit{The Hamilton vector field} of $a\in C^\infty(\R^n \times \R^n)$ is defined as the unique vector field $H_{a}\in T(\R^n \times \R^n)$ such that
    \[
    d a(V)=\sigma(V,H_{a}), \quad \forall V \in T(\R^n \times \R^n),
    \]
where $\sigma$ is the standard symplectic form on $\R^n \times \R^n$.
\end{definition} 
The standard symplectic form $\sigma$, in coordinates
$(x_1,...,x_n,\xi_1,...,\xi_n)$ on $\R^n \times \R^n$, is given by $\sigma=\sum_j d\xi_j \wedge dx_j$, therefore the Hamilton vector field of $a$  takes the form
$H_{a}=\sum_{j=1}^n \partial_{\xi_j} a\cdot \partial_{x_j}-\partial_{x_j} a\cdot \partial_{\xi_j}$. Hence, for every  symbol $p$
$$H_{a}p(x,\xi)=\{a,p\}(x,\xi),$$
where $\{\cdot,\cdot\}$ is the so-called \textit{Poisson bracket} defined as
$$\{a_1,a_2\}(x,\xi):= \nabla_\xi a_1(x,\xi)\cdot \nabla_x a_2(x,\xi)-\nabla_x a_1(x,\xi)\cdot \nabla_\xi a_2(x,\xi), \quad (x,\xi) \in \R^n \times \R^n.$$

\begin{definition}\label{biccurve}
    Let $a \in C^\infty(\R^n \times \R^n)$ and $H_a$ be the Hamilton vector field of $a$. We call \textit{bicharacteristic curves} of $a$ the integral curves of $H_a$.
\end{definition}
\begin{definition}\label{prtype}
    Let $A\in \Psi_{\mathrm{cl}}^m(\R^n)$ be a classical pseudo-differential operator of order $m$ with principal symbol $a_m$, and let $V=\xi \cdot \partial_\xi \in T(\R^n \times \R^n)$ be the radial vector field. The operator $A$ is said to be of \textit{real principal type} if the Hamilton vector field $H_{a_m}$ never vanishes on $\lbrace (x,\xi) \in \R^n \times (\R^n \setminus \lbrace 0 \rbrace); \ a_m(x,\xi)=0 \rbrace$ and does not have the radial direction there. In other words, there is no point $(x_0,\xi_0) \in \lbrace (x,\xi) \in \R^n \times (\R^n \setminus \lbrace 0 \rbrace); \ a_m(x,\xi)=0 \rbrace$ such that $H_{a_m}|_{(x_0,\xi_0)}=cV$, for some $c \in \R$.
\end{definition}
\section{Admissible operators}\label{sec.adm}
The purpose of this section is to investigate the so-called \textit{admissible operators} (see Definition \ref{def.admissible}), that is, operators having symbols to which Lemma \ref{newDoilem} below is applicable. 
As we will see in Section \ref{sec.smoothing.estimates}, in order to prove the smoothing effect for the evolution operator $D_t-A$, a fundamental role is played by the admissibility of the real part of the operator $A$.
This fact motivates our interest in these operators, especially in establishing sufficient conditions in order for an operator to be admissibile. The aforementioned sufficient conditions are suggested by the type of operators we are interested in, that is, operators having (possibly) a \textit{real-valued symbol} $a \in S^m(\R^n)$ satisfying the following two conditions:
\begin{itemize}
\item [(i)] There exists  $C>0$ such that 
\begin{equation}\label{ellderivatives}
\frac{1}{C}\abs{\xi}^{m-1}\leq \abs{\nabla_\xi a}\leq C\abs{\xi}^{m-1}, \quad \forall (x,\xi)\in \R^{n}\times \R^n.
\end{equation}
\item[(ii)] For all $\alpha,\beta \in \mathbb{N}_0^n$, $\abs{\beta}\geq 1$, and for $\varepsilon>0$ sufficiently small
\begin{equation}\label{smallcoeffa2a3}
\abs{\partial_x^\beta \partial_\xi^\alpha a} \lesssim \varepsilon\lambda(\abs{x})\abs{\xi}^{m-\abs{\alpha}}, \quad \forall (x,\xi)\in \R^{n}\times \R^n,
\end{equation}
where $\lambda \in L^1([0,+\infty[)\cap C^0([0,+\infty[)$ is a strictly positive and non-increasing function.
\end{itemize}

When the symbol is complex-valued, then \eqref{ellderivatives} and \eqref{smallcoeffa2a3} are required on the real part, while the imaginary part will be assumed suitably small.

To make sense of \textit{admissible} operators, we first focus on Lemma \ref{newDoilem} in Subsection \ref{sub.fundlemma}. Then, in Subsection \ref{subs.adm}, we define the admissible operators and prove that \eqref{ellderivatives} and \eqref{smallcoeffa2a3} are sufficient conditions to have admissibility.

\subsection{A new version of Doi's lemma}
\label{sub.fundlemma}
As already mentioned, here
 we give a fundamental result to prove the smoothing effect of operators of the form \eqref{eqP} when the latter satisfy suitable assumptions, that is Lemma \ref{newDoilem}.  This will be a new version of the so-called Doi's lemma (Lemma 2.3 in \cite{D96}), targeted at operators of order $m\geq 2$ rather than of order $m=2$. Even if we will focus on the smoothing effect when $A$ in \eqref{eqP} is of order two or three, hence when $a\in S^m(\R^n), m=2,3$, the result of this section is given for  symbols $a\in S^m(\R^n)$ with $m\geq 2$.
 The very first version of Lemma \ref{newDoilem} for elliptic operators of order two is due to Doi in \cite{D96}, who used it to get the homogeneous smoothing estimates and the local well-posedness of the LIVP for variable coefficient Schr\"odinger operators. Doi's lemma was then used in \cite{KPRV05} to deal with the NLIVP, specifically to prove inhomogeneous smoothing estimates. In \cite{KPRV05} the authors also derived a version of Doi's lemma for second order ultrahyperbolic operators, which was used to get smoothing estimates for ultrahyperbolic Schr\"odinger operators.  Our version of Doi's lemma includes both the two aforementioned known cases.

\begin{lemma}\label{newDoilem}
Let $a \in S^m(\R^n)$, with $m\geq 2$, be real valued, and let $\lambda \in L^1([0,+\infty[)\cap C^0([0,+\infty[)$ be a positive non-increasing function. Assume also that there exists a function $q\in C^\infty(\R^n \times \R^n; \R)$ satisfying the following two conditions.

(i) For each $\alpha,\beta \in \mathbb{N}_0^n$,  there exists a constant $C_{\alpha,\beta}>0$ such that  
\begin{align}
    \abs{ \partial_x^\beta \partial_\xi^\alpha q(x,\xi)}\leq 
   \left\{ \begin{array}{ll}
    C_{\alpha\beta} \langle x\rangle \langle \xi \rangle^{-\abs{\alpha}},     & \text{if}\,\,|\beta|=0,  \\ 
     C_{\alpha\beta}  \langle \xi \rangle^{-\abs{\alpha}},    &  \text{if}\,\,|\beta|\geq 1 , 
    \end{array}\right.
\end{align}
for all $(x,\xi) \in \R^n \times \R^n$.

(ii) There exist two constants $C_1,C_2>0$  such that
\begin{align}
    H_{a}q(x,\xi)\geq C_1\abs{\xi}^{m-1}-C_2,
\end{align}
for all $(x,\xi) \in \R^n \times \R^n$.

Then, for all $C>0$ there exists a real-valued symbol $p \in S^0(\R^n)$ and a constant $C'>0$ such that 
\begin{equation}\label{admp}
    H_{a}p(x,\xi)\geq C\lambda(\abs{x})\abs{\xi}^{m-1}-C',\quad \mathrm{for} \ \mathrm{all} \ (x,\xi)  \in \R^n \times \R^n .
\end{equation}
\end{lemma}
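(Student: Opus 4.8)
The plan is to adapt Doi's argument \cite{D96} (and its ultrahyperbolic refinement \cite{KPRV05}) so as to cover symbols of order $m\ge 2$, the genuinely new work being the bookkeeping for $m=3$.

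\emph{Reductions.} Since the right-hand side of \eqref{admp} is $\le\lambda(0)\,|\xi|^{m-1}$, that inequality holds with $p\equiv 0$ on any set $\{|\xi|\le R\}$; so it is enough to arrange $H_ap(x,\xi)\ge\lambda(|x|)\langle\xi\rangle^{m-1}$ for $|\xi|$ large, where $\langle\xi\rangle\sim|\xi|$, and to throw the bounded-frequency discrepancy into the constant $C$. Also, because $\langle x\rangle\le 2|x|+2$, the function $\bar\lambda(r):=\lambda\big((r/2-1)_+\big)$ is again positive, non-increasing, continuous and integrable on $[0,+\infty[$, and satisfies $\bar\lambda(\langle x\rangle)\ge\lambda(|x|)$; hence it suffices to produce $p$ with $H_ap\ge\bar\lambda(\langle x\rangle)\langle\xi\rangle^{m-1}-C$. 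The hypothesis $\lambda\in L^1$ will be used through $\Lambda(r):=\int_r^{+\infty}\bar\lambda(\rho)\,d\rho$, which is non-increasing, bounded by $\|\bar\lambda\|_{L^1}$, and (after a harmless mollification) belongs to $C_b^\infty(\R)$ with $\Lambda'=-\bar\lambda$; this boundedness is exactly what will force $p$ to be bounded.

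\emph{Construction of $p$.} The core step is to solve $H_ap=\bar\lambda(\langle x\rangle)\langle\xi\rangle^{m-1}$ up to lower-order errors by transporting the weight along the Hamilton flow of $a$. Write $t\mapsto(X(t;x,\xi),\Xi(t;x,\xi))$ for the bicharacteristic of $a$ through $(x,\xi)$, fix a cut-off $\psi$ with $0\le\psi\le 1$, $\psi\equiv 0$ on $\{|\xi|\le R_0\}$, $\psi\equiv 1$ on $\{|\xi|\ge 2R_0\}$ (so $\psi\in S^0(\R^n)$), with $R_0$ large enough that $C_1|\xi|^{m-1}-C_2\ge\tfrac12 C_1|\xi|^{m-1}\ge C_2$ on $\{|\xi|\ge R_0\}$, and set
\[
p(x,\xi):=-\int_0^{+\infty}\psi\big(\Xi(t;x,\xi)\big)\,\bar\lambda\big(\langle X(t;x,\xi)\rangle\big)\,\langle\Xi(t;x,\xi)\rangle^{m-1}\,dt .
\]
By the group law $X(t;X(s;x,\xi),\Xi(s;x,\xi))=X(t+s;x,\xi)$, $\Xi(t;X(s),\Xi(s))=\Xi(t+s)$, one has $p(X(s),\Xi(s))=-\int_s^{+\infty}\psi(\Xi(u))\bar\lambda(\langle X(u)\rangle)\langle\Xi(u)\rangle^{m-1}\,du$, so differentiating at $s=0$ gives
\[
H_ap(x,\xi)=\psi(\xi)\,\bar\lambda(\langle x\rangle)\,\langle\xi\rangle^{m-1}.
\]
On $\{|\xi|\ge 2R_0\}$ this equals $\bar\lambda(\langle x\rangle)\langle\xi\rangle^{m-1}\ge\lambda(|x|)|\xi|^{m-1}$, and on $\{|\xi|<2R_0\}$ it is $\ge 0\ge\lambda(|x|)|\xi|^{m-1}-\lambda(0)\langle 2R_0\rangle^{m-1}$; thus \eqref{admp} holds with $C=\lambda(0)\langle 2R_0\rangle^{m-1}$. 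Since $\bar\lambda$ and the flow are real, $p$ is real. It remains to show the integral converges and that $p\in S^0(\R^n)$.

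\emph{Convergence, boundedness, and the symbol bounds.} Here conditions (i)--(ii) are decisive. On the support of the integrand $|\Xi(t)|\ge R_0$, so there
\[
\tfrac{d}{dt}q(X(t),\Xi(t))=H_aq(X(t),\Xi(t))\ge\tfrac12 C_1|\Xi(t)|^{m-1}>0 ,
\]
while $|q|\le C_0\langle x\rangle$ by (i); this is the usual escape-function mechanism, forcing the bicharacteristic out to spatial infinity along the portion of the trajectory carrying the integrand, so that $\bar\lambda(\langle X(t)\rangle)$ decays there. The useful quantitative form is $\psi(\Xi(t))\langle\Xi(t)\rangle^{m-1}\lesssim\tfrac{d}{dt}q(X(t),\Xi(t))$ together with $\langle X(t)\rangle\gtrsim\max(|q(X(t),\Xi(t))|,1)$, which bounds the integrand by $\bar\lambda$ of a quantity increasing along the trajectory; since $\bar\lambda\in L^1$, the $t$-integral converges uniformly in $(x,\xi)$, which shows $p$ is well defined and bounds $\|p\|_{L^\infty}$. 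For the symbol estimates one differentiates under the integral sign: $\partial_x^\beta\partial_\xi^\alpha p$ is a time integral of $\psi$- and $\bar\lambda$-derivatives contracted with the derivatives of the flow maps $X,\Xi$ in the initial data, and those flow derivatives satisfy symbol-type bounds by Gronwall estimates, $a\in S^m$ having all derivatives bounded in the symbol class. Propagating the growth/decay bounds of (i)--(ii) through these identities yields $|\partial_x^\beta\partial_\xi^\alpha p(x,\xi)|\lesssim\langle\xi\rangle^{-|\alpha|}$, i.e.\ $p\in S^0(\R^n)$. (Carrying out the estimate over a dyadic partition of unity in $\xi$, on each shell of which $\langle\xi\rangle$ is essentially constant and the flow uniformly tame, is a convenient way to make all constants uniform.)

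\emph{Main obstacle.} The weight of the proof lies in this last step — upgrading boundedness of $p$ to the full $S^0$ bounds. The delicate point is that the $\xi$-derivatives of the flow (as already foreshadowed by the $\langle x\rangle$ factor allowed in (i)) grow in $\langle X(t)\rangle$, and one must check that this is exactly compensated by the decay of $\bar\lambda(\langle X(t)\rangle)$ along the trajectory — which is precisely where $\lambda\in L^1$ is indispensable, and also the step that would require stronger a priori control of the flow than is available for $m\ge 4$. Specializing the construction recovers the classical statements: for $m=2$ with $a_2$ elliptic it is Doi's lemma \cite{D96}, and for $m=2$ with $a_2$ ultrahyperbolic the version of \cite{KPRV05}.
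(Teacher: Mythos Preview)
Your approach is fundamentally different from the paper's and, as written, contains a genuine gap. The paper never touches the Hamilton flow: it builds $p$ algebraically, defining cut-offs $\psi_0,\psi_\pm$ according to the value of $q(x,\xi)/\langle x\rangle$ and setting
\[
p=\frac{q}{\langle x\rangle}\,\psi_0+\bigl(f(|q|)+2\varepsilon\bigr)(\psi_++\psi_-),
\]
where $f$ is a bounded antiderivative of a function dominating $\lambda$ (supplied by Doi's Lemma~3.1). The lower bound on $H_ap$ then follows by a direct pointwise computation. This works for every $m\ge 2$ under only the hypotheses stated; no control of bicharacteristics is needed.

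Your flow-based construction, by contrast, requires inputs that the lemma does not provide. First, for a general real $a\in S^m(\R^n)$ with $m\ge 2$ the Hamilton flow need not be globally defined: $\dot\Xi=-\nabla_xa(X,\Xi)=O(\langle\Xi\rangle^m)$ can blow up in finite time, so the integral defining $p$ may not make sense. Second, even granting global existence, the step you yourself flag as the ``main obstacle'' --- the $S^0$ bounds on $\partial_x^\beta\partial_\xi^\alpha p$ --- is not merely delicate bookkeeping: derivatives of the flow in the initial data obey Gronwall-type bounds that are exponential in $t$ for generic $a\in S^m$, and nothing in hypotheses (i)--(ii) tames them. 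The appeal to ``symbol-type bounds by Gronwall estimates'' is where the proof breaks. Finally, your closing remark conflates two distinct issues: the lemma itself is valid for all $m\ge 2$ (and the paper proves it in that generality); the restriction to $m\le 3$ appears only later, in the smoothing estimates, and comes from the Fefferman--Phong inequality, not from any failure of this lemma.
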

\proof
Let us first take $K\geq1$ such that $\abs{q(x,\xi)} \leq K \langle x \rangle$ for $(x,\xi) \in \R^n\times \R^n$, extend $\lambda(t)=\lambda(0)$ for $t \leq 0$, and define $\tilde{\lambda}(t):=\lambda(K^{-1}t-10)$ (note that $\tilde{\lambda}(t)\leq \lambda(t)$ for all $t\leq 0$). 

By Lemma 3.1 in \cite{D96} we can find a nonnegative function $f \in C^\infty([0,+\infty[)$ such that, for all $t \geq 0$, $\tilde{\lambda}(t)\leq f'(t)$  and
\begin{equation}\label{f}
\abs{f^{(m)}(t)}\leq C_m\Bigl(\tilde{\lambda}(0)+\int_0^t \tilde{\lambda}(s)ds\Bigr) \, (1+t)^{-m}\leq C_m\Bigl(\lambda(0)+\int_0^t \lambda(s)ds\Bigr) \, (1+t)^{-m}, 
\end{equation}
for some new suitable constant $C_m>0$.
Moreover, by the properties of $f$ we have 
\begin{equation}\label{f2}
f'(\abs{q}) \geq \lambda(K^{-1}\abs{q}-10) \geq \lambda (\langle x \rangle -10) \geq \lambda(\abs{x}), \quad \forall (x,\xi) \in \R^n\times \R^n.
\end{equation}

Now, we take $\varepsilon$ sufficiently small to be fixed later, and $\phi \in C^\infty(\R)$ such that $\phi(t)=0$, if $t\leq 1$, $\phi(t)=1$ if $t \geq 2$, and $\phi'(t) \geq 0$ on $\R$. We set $\phi_+(t):=\phi(t/\varepsilon)$, $\phi_-(t):=\phi(-t/\varepsilon)$ and $\phi_0:=1-\phi_+-\phi_-$, and define $\psi_+,\psi_-,\psi_0 \in S^0(\R^n)$ as
\[
\psi_0(x,\xi):=\phi_0(q(x,\xi)/\langle x \rangle), \quad \psi_+(x,\xi):=\phi_+(q(x,\xi)/\langle x \rangle) \quad \mathrm{and} \quad \psi_-(x,\xi):=\phi_-(q(x,\xi)/\langle x \rangle). 
\]
Since $\lambda \in L^1([0,+\infty[)$, by \eqref{f} and hypothesis $(i)$ on $q$, for some $C_{\alpha,\beta}>0$
\[
\abs{\partial_x^\beta \partial_\xi^{\alpha}f(\abs{q(x,\xi)})}\leq C_{\alpha,\beta}\langle \xi \rangle^{-\abs{\alpha}}
\quad \text{on} \,\,\, \mathrm{supp} \, \psi_+ \cup \mathrm{supp} \, \psi_-.\] 

Next, we define $p \in S^0(\R^n)$ as
\begin{equation}\label{Hp}
p(x,\xi):=\frac{q(x,\xi)}{\langle x \rangle}\psi_0(x,\xi)+(f(\abs{q(x,\xi)})+2\varepsilon)(\psi_+(x,\xi)-\psi_-(x,\xi)),
\end{equation}
and estimate $H_{a}p$ from below. 

First observe that, by hypotheses (i) and (ii) on $q$, and by choosing $\varepsilon>0$ sufficiently small, we have 
\begin{equation}\label{eq.2010}
H_{a}\Bigl(\frac{q}{\langle x \rangle}\Bigr)=\frac{H_{a}q}{\langle x \rangle}-\frac{q}{\langle x \rangle}\frac{x \cdot \nabla_\xi a}{\langle x \rangle^2}\geq C_1' \frac {|\xi|^{m-1}}{\langle x \rangle}-C_2' \quad \text{on}\,\, \mathrm{supp} \, \psi_0,
\end{equation}
 for some $C_1',C_2'>0$. Therefore, by the properties of $f$ and the inclusions  $\mathrm{supp} \, \phi'_+, \mathrm{supp} \, \phi'_- \subseteq \mathrm{supp}\,\phi_0$, we have
\begin{align}
H_{a}p&=H_{a}\Bigl(\frac{q}{\langle x \rangle}\Bigr)\psi_0+  f'(\abs{q})(H_{a}q)(\psi_+ +\psi_-)+\Bigl(f(\abs{q})+2\varepsilon-\frac{\abs{q}}{\langle x \rangle}\Bigr)\Bigl(\phi_+'(\frac{q}{\langle x \rangle})-\phi_-'(\frac{q
    }{\langle x \rangle})\Bigr)H_{a}\Bigl(\frac{q}{\langle x \rangle}\Bigr) \nonumber\\
    &\geq H_{a}\Bigl(\frac{q}{\langle x \rangle}\Bigr)\psi_0+  f'(\abs{q})(H_{a}q)(\psi_+ +\psi_-) -C_3,\label{Hp.1}
    \end{align}
for some $C_3>0$.

To estimate the first term on the RHS (right-hand side) of \eqref{Hp.1} we use again \eqref{eq.2010}, while for the second term on the RHS of \eqref{Hp.1} we use hypothesis (ii) and \eqref{f2}. 
 Putting everything together we obtain 
 \[
H_{a}p\geq C_4\Big(\langle x \rangle^{-1} \psi_0+\lambda(\abs{x})(\psi_++\psi_-)\Big)\abs{\xi}^{m-1}-C_5,
\]
for some new constants $C_4,C_5>0$.

 Finally, since $\lambda(t)=O(t^{-1})$ as $t \rightarrow +\infty$, we have $\langle x\rangle^{-1}\gtrsim \lambda(\abs{x})$, which yields 
\[
H_{a}p\geq C_6 \lambda(\abs{x})\abs{\xi}^{m-1}-C_5
\]
for some $C_6>0$. In conclusion, possibly rescaling the symbol $p$, we showed that for all $C>0$ there exists a constant $C'>0$ such that 
\[
H_{a}p\geq C\lambda(\abs{x})\abs{\xi}^{m-1}-C', \quad \forall x, \xi \in \R^n.
\] 
\endproof
\begin{remark}
    Note that when $m=2$ our conditions in Lemma \ref{newDoilem} are related with (A6) in \cite{D96}, but with a slightly different choice of $q$. As already mentioned, such a $q$, or more precisely the symbol $p \in S^0(\R^n)$ constructed through $q$ in Lemma \ref{newDoilem}, will play a central role to get the smoothing estimates in Section \ref{sec.smoothing.estimates}.
\end{remark}
\endproof

\textcolor{blue}{
\textbf{Fact:}
Our condition will be: there exists $C_1,C_2>0$ such that 
\begin{align}
    H_{a_3}q+H_{a_2}q+\re a_1\geq C_1|\xi|^2    -C_2,\quad \forall (x,\xi)\in \R^n\times\R^n.
\end{align}
assuming the coefficients of $a_2,a_1$ very small wrt the coefficients of $a_3$.}

\textcolor{red}{Let us assume now that there exists a function $q\in C^\infty(\R^n \times \R^n; \R)$ satisfying the following two conditions.}

\textcolor{red}{(i) For each $\alpha,\beta \in \mathbb{N}_0^n$ there exists a constant $C_{\alpha,\beta}>0$ such that  
\begin{align}
    \abs{\partial_\xi^\alpha \partial_x^\beta q(x,\xi)}\leq C_{\alpha\beta} \langle x \rangle \langle \xi \rangle^{-\abs{\alpha}},
\end{align}
for all $(x,\xi) \in \R^n \times \R^n$.}

\subsection{Admissible operators and sufficient conditions for admissibility}\label{subs.adm}
We are finally ready to give a characterization of the operators to which Lemma \ref{newDoilem} is applicable. 
According to the following definition, such operators will be called {\it admissible}. 

\begin{definition}\label{def.admissible}
    Let $a\in S_{}^m(\R^n)$ be a pseudo-differential symbol of order $m\geq 2$. We say that $a$ is {\it admissible} if there exists $q\in C^\infty(\R^n\times \R^n)$ such that

(i) For each $\alpha,\beta \in \mathbb{N}_0^n$,  there exists a constant $C_{\alpha,\beta}>0$ such that  
\begin{align}
    \abs{ \partial_x^\beta \partial_\xi^\alpha q(x,\xi)}\leq 
   \left\{ \begin{array}{ll}
    C_{\alpha\beta} \langle x\rangle \langle \xi \rangle^{-\abs{\alpha}},     & \text{if}\,\,|\beta|=0,  \\ 
     C_{\alpha\beta}  \langle \xi \rangle^{-\abs{\alpha}},    &  \text{if}\,\,|\beta|\geq 1 , 
    \end{array}\right.
\end{align}
for all $(x,\xi) \in \R^n \times \R^n$.

(ii) There exist $C_1,C_2>0$ such that
\begin{align}
    H_{a}q(x,\xi)\geq C_1\abs{\xi}^{m-1}-C_2,
\end{align}
for all $(x,\xi) \in \R^n \times \R^n$.

    We will call a function $q$ satisfying (i) and (ii) above a {\it Gårding weight} for $a$ and the constant $C_1$ the corresponding \textit{Gårding constant}.
    Moreover, given a pseudo-differential operator $A$, we say that $A$ is \textit{admissible} if its symbol $a$ is admissible.
\end{definition}
In Proposition \ref{propsuffadmiss} below we will prove that \eqref{ellderivatives} and \eqref{smallcoeffa2a3} are sufficient conditions for a pseudo-differential operator to be admissible.
  These conditions will allow us to consider different situations and to prove that several operators of interest are admissible. In particular, the operators we have in mind are those used to define dispersive equations. The reader can notice that typical constant coefficient operators $A$ on $\R_x^n$ of order $m=2,3,$ used to define dispersive equations $D_t-A$, fall into the cases covered by Proposition \ref{propsuffadmiss}.

\begin{remark}
Condition \eqref{smallcoeffa2a3} corresponds to a smallness assumption on the symbol, which, in the case of partial differential operators, amounts to a sort of smallness of the operator's coefficients, specifically of their derivatives. This requirement has to do with the behavior of the bicharacteristics of the principal symbol, as can be seen when $a$ is of order two and is either elliptic (see \cite{D96,D00,CKS}) or ultrahyperbolic (see \cite{KPRV05}). 
Of course, if the operator is a Fourier multiplier or, equivalently, its symbol is independent of the space variable $x$, then the smallness condition is trivially satisfied. 
\end{remark}
\begin{remark}
    We wish to point out that operators satisfying condition \eqref{ellderivatives} (in the classical case) are examples of operators of  \textit{real principal type} (see Definition \ref{prtype}). Indeed, when $A \in \Psi^m_{\mathrm{cl}}(\R^n)$ has a real principal symbol $a_m$ that satisfies \eqref{ellderivatives}, it is  easy to check that $H_{a_m}$ never vanishes on $\lbrace (x,\xi) \in \R^n \times (\R^n \setminus \lbrace 0 \rbrace); \ a_m(x,\xi)=0 \rbrace$ and does not have radial direction. 
    These operators play a key role in the study of propagation of singularities and in solvability problems (see, for instance, \cite{Le10}).
\end{remark}

\begin{remark}\label{rmk.admiss.const.coeff}
    If $A$ is an operator with constant coefficients, then \eqref{smallcoeffa2a3} is trivially satisfied, so \eqref{ellderivatives} guarantees the admissibility of $A$. Moreover, in such a case, Theorem \ref{thm.LIVP} holds if the symbol of $A$ is real and admissible. In turn, for operators with real symbol, the admissibility is a sufficient condition to have the smoothing effect.
\end{remark}

\begin{proposition}\label{propsuffadmiss}
If $a \in S^m(\R^n)$, $m\geq 2$, is a \textit{real valued} symbol such that \eqref{ellderivatives} and \eqref{smallcoeffa2a3} are satisfied, then $a$ is admissible, i.e. there exists a function $q\in C^\infty(\R^n \times \R^n; \R)$ such that the following two conditions hold:

(i) For each $\alpha,\beta \in \mathbb{N}_0^n$ there exists a constant $C_{\alpha,\beta}>0$ such that  
    \begin{align}\label{prop.q}
    \abs{ \partial_x^\beta \partial_\xi^\alpha q(x,\xi)}\leq 
   \left\{ \begin{array}{ll}
    C_{\alpha\beta} \langle x\rangle \langle \xi \rangle^{-\abs{\alpha}},     & \text{if}\,\,|\beta|=0,  \\ 
     C_{\alpha\beta}  \langle \xi \rangle^{-\abs{\alpha}},    &  \text{if}\,\,|\beta|\geq 1 , 
    \end{array}\right.
\end{align}
for all $(x,\xi) \in \R^n \times \R^n$.

(ii) There exist $C_1, C_2>0$ such that
\begin{align}
    H_{a}q(x,\xi)\geq C_1\abs{\xi}^{m-1}-C_2,
\end{align}
for all $(x,\xi) \in \R^n \times \R^n$.
\end{proposition}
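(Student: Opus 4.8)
The plan is to exhibit an explicit Gårding weight $q$ for $a$ and verify the two required properties. The natural candidate, suggested by the commented-out Lemma in the excerpt, is
\[
q(x,\xi) := \frac{x \cdot \nabla_\xi a(x,\xi)}{\langle \xi \rangle} \cdot \frac{1}{\langle\xi\rangle^{m-2}} \quad \text{or, more simply,} \quad q(x,\xi) := \frac{x\cdot\nabla_\xi a(x,\xi)}{|\nabla_\xi a(x,\xi)|^{2}}\,|\xi|^{m-1},
\]
but the cleanest choice that cooperates with the symbol bounds is
\[
q(x,\xi) := \frac{x\cdot\nabla_\xi a(x,\xi)}{\langle\xi\rangle^{m-1}}.
\]
First I would check property (i): since $a\in S^m$, each $\partial_\xi^\gamma a$ gains a power of $\langle\xi\rangle$ per $\xi$-derivative, and $\partial_x$ derivatives of $a$ cost nothing in $\xi$; combined with the factor $\langle\xi\rangle^{-(m-1)}$ and the single power of $x$ from the explicit $x$-prefactor, a routine Leibniz-rule computation gives $|\partial_\xi^\alpha\partial_x^\beta q|\lesssim \langle x\rangle^{1-|\beta|}\langle\xi\rangle^{-|\alpha|}$. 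This is bookkeeping and not the difficult part.

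The heart of the matter is property (ii): bounding $H_a q = \nabla_\xi a\cdot\nabla_x q - \nabla_x a\cdot\nabla_\xi q$ from below. For the main term, $\nabla_\xi a\cdot\nabla_x q$, the dominant contribution comes from differentiating the explicit factor $x$, producing $\langle\xi\rangle^{-(m-1)}\nabla_\xi a\cdot\nabla_\xi a = \langle\xi\rangle^{-(m-1)}|\nabla_\xi a|^2$, which by the lower bound in \eqref{ellderivatives} is $\gtrsim \langle\xi\rangle^{-(m-1)}|\xi|^{2(m-1)} \gtrsim |\xi|^{m-1} - C$. The remaining pieces of $\nabla_\xi a\cdot\nabla_x q$ involve $\nabla_\xi a\cdot\nabla_x(\nabla_\xi a)$, hence $\nabla_x\partial_\xi^\alpha a$ terms; by the smallness condition \eqref{smallcoeffa2a3} these are bounded by $\varepsilon\langle x\rangle^{-1}|\xi|^{m-1}\cdot(\text{stuff of size }\langle x\rangle\langle\xi\rangle^{m-1-(m-1)})$, uniformly $\lesssim \varepsilon|\xi|^{m-1}$. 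The cross term $\nabla_x a\cdot\nabla_\xi q$ is likewise controlled: $\nabla_x a$ is of size $\varepsilon\langle x\rangle^{-1}|\xi|^m$ by \eqref{smallcoeffa2a3} with $|\alpha|=0$ (note $|\xi|^m \le \langle\xi\rangle|\xi|^{m-1}$ after harmless modification, or one rescales $q$ by an extra $\langle\xi\rangle^{-1}$), and $\nabla_\xi q$ is of size $\langle x\rangle\langle\xi\rangle^{-1}$, so the product is $\lesssim \varepsilon|\xi|^{m-1}$. Choosing $\varepsilon$ small enough that all the error terms absorb at most half of the main term $c|\xi|^{m-1}$ yields $H_a q \ge C_1|\xi|^{m-1} - C_2$.

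The main obstacle I anticipate is purely technical rather than conceptual: one must be careful that the homogeneity degrees match, since \eqref{ellderivatives} and \eqref{smallcoeffa2a3} are phrased with $|\xi|^{m-1}$ and $|\xi|^{m-|\alpha|}$ (powers of $|\xi|$, not $\langle\xi\rangle$), which is only problematic near $\xi=0$ — but there everything is bounded and gets absorbed into the constant $-C_2$, so one works on $|\xi|\ge 1$ and patches. A secondary subtlety is the mismatch between $|\xi|^m$ appearing in the cross term and the target $|\xi|^{m-1}$; this is handled either by inserting an extra $\langle\xi\rangle^{-1}$ in the definition of $q$ (which does not spoil property (i), only improves it) or by noting that the cross term already carries the smallness factor $\varepsilon$ and can be dominated once one is willing to shrink $\varepsilon$ and enlarge $C_2$. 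Once the definition of $q$ is pinned down so these bookkeeping issues are clean, the proof is a direct estimate; I would also remark that this $q$ is, up to lower-order corrections, exactly Doi's weight $x\cdot\nabla_\xi a/|\xi|$ specialized from the order-two elliptic case, which explains why conditions \eqref{ellderivatives}--\eqref{smallcoeffa2a3} are the right generalization.
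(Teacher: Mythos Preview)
Your proposal is correct and takes essentially the same approach as the paper: the paper's proof also chooses $q(x,\xi)=\text{const}\cdot\langle\xi\rangle^{-(m-1)}\,x\cdot\nabla_\xi a(x,\xi)$, identifies the dominant contribution $\langle\xi\rangle^{-(m-1)}|\nabla_\xi a|^2\gtrsim|\xi|^{m-1}$ from differentiating the explicit $x$-factor, and absorbs the two remaining pieces (the $x_j\partial_{\xi_k}a\,\partial_{x_k}\partial_{\xi_j}a$ term and the $\nabla_x a\cdot\nabla_\xi q$ term) using the $\varepsilon$-smallness of \eqref{smallcoeffa2a3}, exactly as you outline. The bookkeeping concerns you flag (behaviour near $\xi=0$, and the $|\xi|^m$ versus $|\xi|^{m-1}$ mismatch in the cross term) are handled in the paper precisely as you anticipate, by absorbing into the additive constant $-C_2$.
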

\proof
To prove the result it suffices to consider $q \in C^\infty(\R^n \times \R^n)$ defined as 
\begin{equation}\label{explicitq}
q(x,\xi):=2C_1C^2\langle \xi \rangle ^{-(m-1)}\sum_{j=1}^n x_j\partial_{\xi_j}a(x,\xi),
\end{equation}
where $C$ is the constant in \eqref{ellderivatives}.
 Condition $(i)$ in the statement is trivially satisfied by $q$ as in \eqref{explicitq}, therefore we focus on showing that $(ii)$ holds too with the same function $q$.

By $\eqref{ellderivatives}$ and $\eqref{smallcoeffa2a3}$ 
\begin{align}
H_{a} q(x,\xi)&=\langle \nabla_\xi a,\nabla_x q \rangle - \langle \nabla_x a,\nabla_\xi q \rangle \\
&=2C_1C^2\abs{\nabla_\xi a(x,\xi)}^2\langle \xi \rangle ^{-(m-1)}+2C_1C^2\langle \xi \rangle ^{-(m-1)}\sum_{j,k=1}^n x_j\partial_{\xi_k}a(x,\xi)\partial_{\xi_j}\partial_{x_k}a(x,\xi) - \langle\nabla_x a,\nabla_\xi q \rangle \\
&\geq2C_1C^2\abs{\nabla_\xi a(x,\xi)}^{2}\langle \xi \rangle ^{-(m-1)} -2 C_1C^2\langle \xi \rangle ^{-(m-1)}\sum_{j,k=1}^n \abs{x_j\partial_{\xi_k}a(x,\xi)\partial_{\xi_j}\partial_{x_k}a(x,\xi)} - \abs{\nabla_x a}\abs{\nabla_\xi q} \\
& \geq 2C_1\abs{\xi}^{2(m-1)}\langle \xi \rangle ^{-(m-1)}  -\varepsilon C'\langle \xi \rangle^{m-1}\\
&\geq (2C_1-\varepsilon C'')\abs{\xi}^{m-1}-C_2, \label{ellderimplyadmiss}
\end{align}
for all $(x,\xi)\in \R^n\times \R^n$ and for some $C',C'',C_2>0$. Finally, for $\varepsilon$ sufficiently small
\[
H_{a} q(x,\xi) \geq C_1 \abs{\xi}^{m-1}-C_2\quad \forall(x,\xi)\in\R^{n}\times \R^n,
\]
which shows the result.
\endproof

\begin{remark}\label{rmk.garding.constant}
    Note that, if $q$ is a G\aa rding weight  for $a$ with corresponding G\aa rding constant $C_1>0$, then, for all $C_2>0$, $q'=\frac{C_2}{C_1}q$  is a G\aa rding weight  for $a$ with associated G\aa rding constant $C_2$.
\end{remark}

The following proposition provides a relation between $a$ and its Gårding weight $q$.

\begin{proposition}\label{prop.admi.princ.symb}
    Let $a\in S^m(\R^n)$ be a real-valued symbol and $m\geq 2$. If $a$ is admissible with Gårding weight $q$, then
    \begin{align}
        \nabla_x a\cdot \nabla_\xi q\lesssim \langle \xi\rangle^{m-1}, \quad \forall(x,\xi)\in \R^n\times \R^n.
    \end{align}
\end{proposition}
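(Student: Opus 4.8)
The plan is to extract the bound directly from the identity defining $H_a q$ together with the two defining properties of the Gårding weight. Recall that $H_a q = \nabla_\xi a\cdot\nabla_x q - \nabla_x a\cdot\nabla_\xi q$, hence
\[
\nabla_x a\cdot\nabla_\xi q = \nabla_\xi a\cdot\nabla_x q - H_a q .
\]
The point is that the quantity on the left, which a priori could grow like $\langle x\rangle\langle\xi\rangle^{m-1}$ (since $|\nabla_x a|\lesssim\langle\xi\rangle^m$ and, by (i) of Definition \ref{def.admissible}, $|\nabla_\xi q|\lesssim\langle x\rangle\langle\xi\rangle^{-1}$), is rewritten as a difference of two terms each of which is controlled from the correct side.

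First I would bound $\nabla_\xi a\cdot\nabla_x q$. Since $a\in S^m(\R^n)$ we have $|\nabla_\xi a|\lesssim\langle\xi\rangle^{m-1}$, and property (i) of the Gårding weight with $\alpha=0$, $|\beta|=1$ gives $|\nabla_x q|\lesssim\langle x\rangle^{1-1}=1$ uniformly in $(x,\xi)$ — here it is essential that each $x$-derivative of $q$ gains a power of $\langle x\rangle$. Therefore $|\nabla_\xi a\cdot\nabla_x q|\lesssim\langle\xi\rangle^{m-1}$. Next I would use property (ii), i.e. admissibility: $H_a q(x,\xi)\geq C_1|\xi|^{m-1}-C_2\geq -C_2$, so $-H_a q(x,\xi)\leq C_2$ for all $(x,\xi)$, and since $m\geq 2$ forces $\langle\xi\rangle^{m-1}\geq 1$ this constant is in particular $\lesssim\langle\xi\rangle^{m-1}$. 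Plugging both estimates into the displayed identity yields $\nabla_x a\cdot\nabla_\xi q\lesssim\langle\xi\rangle^{m-1}$, as claimed.

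There is no genuine obstacle here: the whole content of the statement is that the bound is \emph{one-sided}, and the cancellation of the potentially dangerous $\langle x\rangle$-growth in $\nabla_x a\cdot\nabla_\xi q$ is furnished precisely by the lower bound on $H_a q$ in the definition of admissibility — a two-sided control of $H_a q$ is neither available nor needed. The only care required is to invoke property (i) of $q$ with the correct multi-index (to obtain $|\nabla_x q|\lesssim 1$) and to observe that $\langle\xi\rangle^{m-1}$ dominates constants.
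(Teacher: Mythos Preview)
Your proof is correct and uses the same two ingredients as the paper's argument: the bound $|\nabla_\xi a\cdot\nabla_x q|\lesssim\langle\xi\rangle^{m-1}$ coming from $a\in S^m$ together with property (i) of $q$ (with $|\beta|=1$), and the lower bound $H_a q\ge C_1|\xi|^{m-1}-C_2$ from admissibility. The only difference is presentational: the paper phrases the same computation as a proof by contradiction (assume the quantity exceeds $C_0\langle\xi\rangle^{m-1}$ for arbitrarily large $C_0$ and derive a violation of admissibility), whereas you rearrange $H_a q=\nabla_\xi a\cdot\nabla_x q-\nabla_x a\cdot\nabla_\xi q$ and argue directly, which is arguably cleaner.
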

\proof
Suppose, by contradiction, that for all $C_0>0$ there esists $(x_0,\xi_0)\in \R^n \times \R^n$ such that 
\[
 \nabla_x a(x_0,\xi_0)\cdot \nabla_\xi q(x_0,\xi_0)>C_0\langle \xi_0\rangle^{m-1}.
\] 
Since $a$ is a symbol of order $m\geq 2$ and $q$ satisfies hypothesis \eqref{prop.q}, for some $\bar{C},\bar{D} > 0$ we have
\[
\nabla_\xi a \cdot \nabla_x q \leq \abs{\nabla_\xi a \cdot \nabla_x q} \leq \abs{\nabla_\xi a}\abs{\nabla_x q} \leq \bar{C} \abs{\xi}^{m-1}+\bar{
D},\quad \forall (x,\xi)\in \R^n \times \R^n,
\]
 and
\[
H_aq(x_0,\xi_0)=\nabla_\xi a (x_0,\xi_0) \cdot \nabla_x q(x_0,\xi_0)-\nabla_x a(x_0,\xi_0)\cdot \nabla_\xi q(x_0,\xi_0) \leq (\bar{C}-C_0)\abs{\xi_0}^{m-1}+\bar{D}.
\]

On the other hand,  the admissibility of $a$ gives
\[
H_aq(x_0,\xi_0)\geq C\abs{\xi_0}^{m-1}-D
\]
for some $C,D>0$. Therefore, by choosing $C_0>0$ sufficiently large, we reach the contradiction. 
\endproof

   In the next result, we establish a connection between the admissibility of the leading part of the symbol and that of the whole symbol.

\begin{proposition}\label{prop.NC2}
    Let $a\in S_{}^m(\R^n)$ be a symbol of order $m\geq 2$ of the form $a=a_m+a_{m-1}$, with $a_j\in S^j(\R^n)$ real-valued for $j=m-1,m$, and with $a_{m-1}$ satisfying
    \begin{align}
       |\nabla_x\partial^\beta_\xi a_{m-1}| \lesssim \langle x\rangle^{-1}\langle \xi\rangle^{m-1-|\beta|},\quad \forall (x,\xi)\in \R^n \times \R^n.
    \end{align}
    If $a$ is admissible with Gårding weight $q\in C^\infty(\R^n \times \R^n)$, then $a_m$ is admissible with respect to the same Gårding weight $q$.
\end{proposition}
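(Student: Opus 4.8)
The plan is to use the fact that the Hamilton vector field map $a\mapsto H_a$ is linear, so that, writing $a=a_m+a_{m-1}$, we have $H_aq=H_{a_m}q+H_{a_{m-1}}q$. Since condition (i) in Definition \ref{def.admissible} is a property of $q$ alone and holds by hypothesis, it automatically holds for the pair $(a_m,q)$; only condition (ii) needs to be verified for $a_m$. To do so I would show that the lower-order contribution $H_{a_{m-1}}q$ is of order $\langle\xi\rangle^{m-2}$, hence negligible with respect to $\abs{\xi}^{m-1}$ at high frequency, so that the bound $H_aq\ge C_1\abs{\xi}^{m-1}-C_2$ transfers to $H_{a_m}q$.

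First I would estimate $H_{a_{m-1}}q=\nabla_\xi a_{m-1}\cdot\nabla_x q-\nabla_x a_{m-1}\cdot\nabla_\xi q$ term by term. For the first term, since $a_{m-1}\in S^{m-1}(\R^n)$ we have $\abs{\nabla_\xi a_{m-1}}\lesssim\langle\xi\rangle^{m-2}$, while \eqref{prop.q} with $\abs{\beta}=1$, $\alpha=0$ gives $\abs{\nabla_x q}\lesssim 1$; hence $\abs{\nabla_\xi a_{m-1}\cdot\nabla_x q}\lesssim\langle\xi\rangle^{m-2}$. For the second term, the hypothesis on $a_{m-1}$ with $\beta=0$ gives $\abs{\nabla_x a_{m-1}}\lesssim\langle x\rangle^{-1}\langle\xi\rangle^{m-1}$, while \eqref{prop.q} with $\abs{\alpha}=1$, $\beta=0$ gives $\abs{\nabla_\xi q}\lesssim\langle x\rangle\langle\xi\rangle^{-1}$; multiplying, the powers of $\langle x\rangle$ cancel and $\abs{\nabla_x a_{m-1}\cdot\nabla_\xi q}\lesssim\langle\xi\rangle^{m-2}$. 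Altogether $\abs{H_{a_{m-1}}q}\le C_0\langle\xi\rangle^{m-2}$ for some $C_0>0$.

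Then, using that $a$ is admissible with Gårding weight $q$, I would write
\[
H_{a_m}q=H_aq-H_{a_{m-1}}q\ge C_1\abs{\xi}^{m-1}-C_2-C_0\langle\xi\rangle^{m-2}.
\]
Since $\langle\xi\rangle^{m-2}\lesssim 1+\abs{\xi}^{m-2}$ and $\abs{\xi}^{m-2}=o(\abs{\xi}^{m-1})$ as $\abs{\xi}\to\infty$, there is $R>0$ with $C_0\langle\xi\rangle^{m-2}\le\tfrac{C_1}{2}\abs{\xi}^{m-1}$ for $\abs{\xi}\ge R$, whereas for $\abs{\xi}\le R$ the right-hand side is bounded below by a constant. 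Hence $H_{a_m}q\ge\tfrac{C_1}{2}\abs{\xi}^{m-1}-C_2'$ for a suitable $C_2'>0$, which is condition (ii) for $a_m$ with Gårding constant $C_1/2$. Together with condition (i), noted above, this shows that $a_m$ is admissible with the same Gårding weight $q$.

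The argument is essentially routine once the decomposition $H_aq=H_{a_m}q+H_{a_{m-1}}q$ is in place; the only points requiring a little care are the cancellation of the $\langle x\rangle$ weights in the term $\nabla_x a_{m-1}\cdot\nabla_\xi q$ — which is precisely what makes the decay hypothesis on $\nabla_x\partial_\xi^\beta a_{m-1}$ the natural one — and the passage from the gain $\langle\xi\rangle^{m-2}$ to something dominated by $\abs{\xi}^{m-1}$ at high frequency. I do not expect any genuine obstacle here.
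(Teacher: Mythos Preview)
Your proof is correct and follows essentially the same approach as the paper: decompose $H_aq=H_{a_m}q+H_{a_{m-1}}q$, use the decay hypothesis on $\nabla_x a_{m-1}$ together with the growth bounds on $q$ to get $|H_{a_{m-1}}q|\lesssim\langle\xi\rangle^{m-2}$, and absorb this lower-order term into the Gårding estimate for $a$. Your presentation is slightly more explicit in tracking the cancellation of the $\langle x\rangle$ weights, but the argument is the same.
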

\proof

Let $q$ be a Gårding weight for $a$ and $C$ a Gårding constant relative to $q$. 
Let also $D>0$ be the constant such that
$$H_{a}q\geq C|\xi|^{m-1}-D,\quad \forall (x,\xi)\in \R^n\times
 \R^n.$$
Due to the properties of the symbol $a_{m-1}$, there exist $C',D'>0$ such that $H_{a_{m-1}}q\lesssim \langle \xi\rangle^{m-2}\leq C' |\xi|^{m-2}+ D'$.
This property and the admissibility of $a$ finally give
\begin{align}
    H_{a_m}q=H_aq-H_{a_{m-1}}q\geq C|\xi|^{m-1}- C'|\xi|^{m-2}-(D+D')\geq C''|\xi|^{m-1}-D'',
\end{align}
for some $C'',D''>0$, with $C''<C$, which concludes the proof.

 \endproof
 \begin{remark}
    Proposition \ref{prop.NC2} shows that the admissibility of a classical real-valued symbol is strictly related to the admissibility of its principal part. However, the latter is not a sufficient condition for the admissibility. In contrast, it is, instead, obviously a necessary condition for the admissibility of classical symbols.
\end{remark}

Next, we will show that if $a\in S^m_{}(\R^n)$ is such that $a=a_m+a_{m-1}$, with $a_j\in S^j(\R^n)$ real valued for all $j=m-1,m$, and if the symbol $a_m$ is admissible, then $a$ is also admissible provided that suitable smallness conditions are satisfied. This is proved in the following result.

\begin{proposition}\label{prop.admiss.a}
    Let $a\in S_{}^m(\R^n)$ be a symbol of order $m\geq 2$ of the form $a=a_m+a_{m-1}$, with $a_j\in S^j(\R^n)$ real valued for all $j=m-1,m$. If 
    \begin{itemize}
        \item[(i)] $a_m$ is admissible with Gårding weight $q\in C^\infty(\R^n \times \R^n)$;
        \item [(ii)] $a_{m-1}$ satisfies  $$|\nabla_x\partial^\beta_\xi a_{m-1}(x,\xi)|\lesssim \langle x\rangle^{-1} \langle \xi\rangle^{m-1-|\beta|}, \quad \forall (x,\xi)\in \R^n \times \R^n,$$
    \end{itemize}
    then $a$ is admissible with respect to the same Gårding weight $q$.
\end{proposition}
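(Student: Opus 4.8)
The plan is to mirror the argument used for Proposition \ref{prop.NC2}, of which the present statement is essentially the converse. Since condition (i) in Definition \ref{def.admissible} is a requirement on the weight $q$ alone (independent of the symbol), and $q$ is kept fixed, it holds verbatim for $a$. Hence the only thing to check is condition (ii), namely the existence of $C_1,C_2>0$ with $H_a q(x,\xi)\geq C_1\abs{\xi}^{m-1}-C_2$ for all $(x,\xi)\in\R^n\times\R^n$.

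First I would use the bilinearity of the Poisson bracket in its first slot to write $H_a q = H_{a_m}q + H_{a_{m-1}}q$. By hypothesis (i), $a_m$ is admissible with Gårding weight $q$, so there are constants $C,D>0$ with $H_{a_m}q\geq C\abs{\xi}^{m-1}-D$. It then remains to show that $H_{a_{m-1}}q = \nabla_\xi a_{m-1}\cdot\nabla_x q - \nabla_x a_{m-1}\cdot\nabla_\xi q$ is a lower-order perturbation. From the symbol estimates for $q$ in Definition \ref{def.admissible}(i) one gets $\abs{\nabla_x q}\lesssim 1$ (take $\abs{\beta}=1$, $\abs{\alpha}=0$) and $\abs{\nabla_\xi q}\lesssim\langle x\rangle\langle\xi\rangle^{-1}$ (take $\abs{\beta}=0$, $\abs{\alpha}=1$). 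Combining these with $a_{m-1}\in S^{m-1}(\R^n)$, which yields $\abs{\nabla_\xi a_{m-1}}\lesssim\langle\xi\rangle^{m-2}$, and with the decay hypothesis (ii) on $a_{m-1}$ taken at $\beta=0$, which yields $\abs{\nabla_x a_{m-1}}\lesssim\langle x\rangle^{-1}\langle\xi\rangle^{m-1}$, both terms are bounded by $\langle\xi\rangle^{m-2}$; the $\langle x\rangle^{\pm 1}$ factors cancel in the second product. Thus $\abs{H_{a_{m-1}}q}\leq C'\abs{\xi}^{m-2}+D'$ for suitable $C',D'>0$.

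Finally I would combine the two bounds to get $H_a q\geq C\abs{\xi}^{m-1}-C'\abs{\xi}^{m-2}-(D+D')$ and absorb the lower-order term via Young's inequality, $C'\abs{\xi}^{m-2}\leq\tfrac{C}{2}\abs{\xi}^{m-1}+C''$, so that $H_a q\geq\tfrac{C}{2}\abs{\xi}^{m-1}-C_2$ with $C_2:=D+D'+C''$. This exhibits $q$ as a Gårding weight for $a$ with Gårding constant $C/2$, proving admissibility of $a$.

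There is no genuine obstacle in this argument; it is a direct estimate. The only point deserving a word of care is the absorption step: it uses that $\langle\xi\rangle^{m-2}$ is dominated by $\abs{\xi}^{m-1}$ up to an additive constant, which is exactly where the standing hypothesis $m\geq 2$ is needed (for $m=2$ one simply bounds $\langle\xi\rangle^{0}=1$ against $\tfrac{C}{2}\abs{\xi}+C''$).
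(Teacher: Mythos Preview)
Your proof is correct and follows essentially the same approach as the paper: split $H_a q = H_{a_m}q + H_{a_{m-1}}q$, use admissibility of $a_m$ for the first term, bound $|H_{a_{m-1}}q|\lesssim\langle\xi\rangle^{m-2}$ via the symbol estimates on $q$ together with hypothesis (ii), and absorb the lower-order term. You are slightly more explicit than the paper in spelling out the absorption step and noting where $m\geq 2$ enters, but there is no substantive difference.
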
   
\proof
Recall that, by definition, a Gårding weight $q$ satisfies the following:

\noindent for each $\alpha,\beta \in \mathbb{N}_0^n$ there exists a constant $C_{\alpha,\beta}>0$ such that  
    \begin{align}
    \abs{ \partial_x^\beta \partial_\xi^\alpha q(x,\xi)}\leq 
   \left\{ \begin{array}{ll}
    C_{\alpha\beta} \langle x\rangle \langle \xi \rangle^{-\abs{\alpha}},     & \text{if}\,\,|\beta|=0,  \\ 
     C_{\alpha\beta}  \langle \xi \rangle^{-\abs{\alpha}},    &  \text{if}\,\,|\beta|\geq 1 , 
    \end{array}\right.
\end{align}
for all $(x,\xi) \in \R^n \times \R^n$.
Hence, by hypothesis (ii),
\begin{align}
    |H_{a_{m-1}}q|&=|\nabla_\xi a_{m-1}\cdot\nabla_xq-\nabla_x a_{m-1}\cdot\nabla_\xi q|\\
    &\leq C_2\langle \xi\rangle^{m-2},\quad \forall (x,\xi)\in \R^n \times \R^n,
\end{align}
for some $C_2>0$.
Then, since $a_m$ is admissible by (i),  we have
\begin{align}
    H_aq=H_{a_m}q+H_{a_{m-1}}q\geq C|\xi|^{m-1}-C_2 |\xi|^{m-2}-D-D'\geq C''|\xi|^{m-1}-D'', \quad \forall (x,\xi)\in \R^n \times \R^n,
\end{align}
for some $C',D',C'',D''>0$ ($C''< C$), which shows the admissibility of $a$ and concludes the proof.
\endproof

\begin{remark}
    Proposition \ref{prop.admiss.a} shows that \textit{small} perturbations of admissibile symbols are still admissible. Here \textit{small} refers to the asymptotic smallness in space of lower order perturbations. Of course, one can also consider sufficiently small perturbations of order $m$.
\end{remark}
\subsection{Examples of admissible operators and dispersive operators with variable coefficients}\label{subs.example}
In this section, we provide several examples of operators satisfying \eqref{ellderivatives} and \eqref{smallcoeffa2a3}, which are, in particular, admissible operators and are used to define dispersive equations. Later, in Section \ref{sec.smoothing.estimates}, specifically in Theorem \ref{thm.LIVP}, we will see that the admissibility of the real part of an operator $A$ on $\R^n_x$ is determinant to prove the smoothing effect of the corresponding dispersive operator $D_t-A$ on $\R_t\times\R^n_x$ whenever $A$ is or order $m=2,3$. 
\begin{example}[Operators defining constant coefficient dispersive equations] The easiest example of admissible operators 
   on $\R^n$ is given by those of the form $A=Op^\mathrm{w}(a)$ with $a(\xi)=|\xi|^m$, $m\geq 2$.  They trivially satisfy both \eqref{ellderivatives} and \eqref{smallcoeffa2a3}, therefore they are admissible by Proposition \ref{propsuffadmiss}. It is well known that dispersive operators $D_t-A$ enjoy smoothing estimates. In particular, when $m=2,3$, thanks to admissibility, we reobtain the classical smoothing effect of $D_t-A$  by Theorem \ref{thm.LIVP} (see also Remark \ref{rmk.admiss.const.coeff} and Remark \ref{rmk.conf.thm.LIN}).
  \end{example}  

\begin{example}[Operators defining constant coefficients KdV-type operators]\label{ex.1}
    \label{toymodels}
    Other examples of admissible operators with constant coefficients on $\R^n$ are given by
    \begin{align}
        A_1(D)&=D^3,\quad \text{when}\,\, n=1,\\
    A_2(D)&=D_1(D_1^2+D_2^2),\quad \text{when}\,\, n=2,\\
        A_n(D)&=D_1\sum_{j=1}^n D_j^2,\quad\text{when}\,\, n\geq 3.
    \end{align}
    Once again, the admissibility of $A_1,A_2$ is due to the validity of \eqref{ellderivatives} and \eqref{smallcoeffa2a3} (note that $A_1$ is included in Example \ref{ex.1}).
    These operators define the (linear) KdV operator $D_t-A_1(D_x)$ on $\R_t\times \R_x$, and the ZK operator $D_t-A_2(D_x)$ on $\R_t\times \R_x^2$.
    Since $A_1$ and $A_2$ are admissible by Proposition \ref{propsuffadmiss}, we get the classical smoothing estimates for $D_t-A_j(D_x), j=1,2,$ by Theorem \ref{thm.LIVP} below (see also Remark \ref{rmk.admiss.const.coeff} and Remark \ref{rmk.conf.thm.LIN}).  For $A_n$, which can be viewed as the higher-dimensional generalization of $A_1$ and $A_2$, the same considerations lead to admissibility and the presence of the smoothing effect.
    \end{example}

    \begin{example}  [Operators defining variable-coefficients KdV-type operators] 
    More generally,  Proposition \ref{propsuffadmiss} successfully applies to show the admissibility of operators on $\R^{n}$ (specifically to the real part of the symbol) of order three of the form of $A_3$ below, that are used to define the KdV-type operators on $\R^{n+1}$ (see \cite{F24})
    \begin{equation} \label{kdv-type}  
A_3+X_0=X_1\sum_{j=1}^nX_j^\ast X_j+X_0,
    \end{equation}
    where $iX_k$, $k=0,...,n$, form an elliptic system of smooth real vector fields satisfying some \textit{smallness} and ellipticity conditions.  Note that these operators can have variable coefficients, so the smoothing effect of the possibly dispersive operator $A_3+X_0$ (consider $X_0=D_t$) is not guaranteed by classical results in the constant coefficients case. Consequently, to be able to say something about the smoothing effect, if we assume \eqref{smallcoeffa2a3} for $A_3=Op^\w(a_3+a_2)$, by Theorem \ref{thm.LIVP} it suffices to have the admissibility of $\re(a_3+a_2)$ and the smallness of $\im(a_2)$, which are provided by the following proposition.
\end{example}

    \begin{proposition}\label{prop.kdv-type}
        Let $A(x,D)$ a partial differential operator of the same form as $A_3$ in \eqref{kdv-type} such that
    $$iX_j=i\sum_{i=1}^na_{ij}(x)D_i, \quad j=0,\ldots,n,$$  form an elliptic system of smooth real vector fields. Let also $a=a_3+a_2\in S^3(\R^n \times \R^n)$ be the Weyl symbol of $A$, where $a_j\in S^j(\R^n \times \R^n)$, $j=2,3$, and $a_3$ is real valued. If
    \begin{align}
        a_{ij}\in C^\infty_b(\R^n)\,\, \text{and}\,\,|\partial_x^{\alpha} a_{ij}(x)|\leq \frac{\varepsilon}{\langle x\rangle^{1+|\alpha|}},\quad \forall x\in \R^n, \forall i,j=1,\ldots,n, \label{cond.kdv.small.}
    \end{align}
    for $\varepsilon$ sufficiently small, there exist $k\in\{1,\ldots,n\}$ and $C>0$ such that
    \begin{align}\label{cond.Xk}
\abs{\partial_{\xi_k}X_1(x,\xi)}\geq C, \quad \forall (x,\xi)\in\R^n \times \R^n,
    \end{align}
    and 
    \begin{align}\label{cond.akj}
        \sup_{x\in\R^n}|a_{kj}(x)|^2< 3C/(n-1), \forall j=2,\ldots,n,
    \end{align}
    then $\re (a)$ is admissible. Moreover, for $\varepsilon$ sufficiently small, there exists a positive constant $c_0<1$ such that
    \begin{align}\label{cond.kdv.im.prop}
        |\im(a_2)(x,\xi)|\leq c_0\langle x\rangle^{-2}|\xi|^2, \quad \forall (x,\xi)\in \R^n \times \R^n.
    \end{align} 
    \end{proposition}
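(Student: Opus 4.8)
The strategy is to verify that the principal symbol $a_3$ fulfils \eqref{ellderivatives} and \eqref{smallcoeffa2a3} — so that $a_3$ is admissible by Proposition \ref{propsuffadmiss} — and then to transfer this admissibility to $\re(a)=a_3+\re(a_2)$ via Proposition \ref{prop.admiss.a}; the bound \eqref{cond.kdv.im.prop} on $\im(a_2)$ is obtained afterwards directly from the Weyl calculus. Write $X_j(x,\xi):=\sum_{i=1}^n a_{ij}(x)\xi_i$ for the real linear form attached to the field $X_j$ (so that, consistently with \eqref{cond.Xk}, $\partial_{\xi_k}X_1=a_{k1}$). Up to a sign which plays no role below, the (real-valued) principal symbol of $A=X_1\sum_j X_j^\ast X_j$ is
\[
a_3(x,\xi)=X_1(x,\xi)\sum_{j=1}^n X_j(x,\xi)^2,
\]
homogeneous of degree $3$ in $\xi$; ellipticity of the system $\{iX_j\}$ yields $c_1>0$ with $\sum_{j=1}^n X_j(x,\xi)^2\ge c_1|\xi|^2$ for all $(x,\xi)$. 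The upper bound in \eqref{ellderivatives} and the smallness \eqref{smallcoeffa2a3} for $a_3$ are immediate from the product rule together with the boundedness of the $a_{ij}$ and the decay $|\partial_x^\alpha a_{ij}(x)|\lesssim\varepsilon\langle x\rangle^{-1-|\alpha|}$ in \eqref{cond.kdv.small.} (one even gains a power $\langle x\rangle^{-1}$, since a single $\partial_x$ hits one coefficient while the others stay bounded).

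\textbf{The lower bound in \eqref{ellderivatives}.} Differentiating and using $\partial_{\xi_k}X_j=a_{kj}$,
\[
\partial_{\xi_k}a_3=a_{k1}\sum_{j=1}^n X_j^2+2X_1\sum_{j=1}^n X_j a_{kj}=a_{k1}\Bigl(3X_1^2+\sum_{j\ge2}X_j^2\Bigr)+2X_1\sum_{j\ge2}X_j a_{kj}.
\]
Bounding below with $|a_{k1}|\ge C$ from \eqref{cond.Xk}, the elementary inequality $2|X_1||X_j|\le\lambda X_1^2+\lambda^{-1}X_j^2$ (with $\lambda>0$ a free parameter), and the smallness \eqref{cond.akj} of $\sup_x|a_{kj}|$ for $j\ge2$, one obtains
\[
|\partial_{\xi_k}a_3|\ge\Bigl(3C-\lambda\sum_{j\ge2}|a_{kj}|\Bigr)X_1^2+\sum_{j\ge2}\bigl(C-\lambda^{-1}|a_{kj}|\bigr)X_j^2,
\]
and \eqref{cond.akj} is precisely the quantitative condition that lets one pick $\lambda$ making both families of coefficients bounded below by some $\eta>0$; then $|\partial_{\xi_k}a_3|\ge\eta\sum_{j=1}^n X_j^2\ge\eta c_1|\xi|^2$, whence $|\nabla_\xi a_3|\ge|\partial_{\xi_k}a_3|\gtrsim|\xi|^2$. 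Together with the previous paragraph this gives \eqref{ellderivatives} for $a_3$, so $a_3$ is admissible by Proposition \ref{propsuffadmiss}.

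\textbf{Admissibility of $\re(a)$ and the imaginary part.} Every term of $a_2$ below the principal symbol $a_3$ arises from a sub-principal/Poisson-bracket correction in the Weyl composition of $X_1$ with $\sum_j X_j^\ast X_j$ (Theorem \ref{thm.weyl.comp}), hence carries at least one factor $\partial_x^\gamma a_{ij}$ with $|\gamma|\ge1$; by \eqref{cond.kdv.small.} the coefficients of $a_2$ therefore decay at least like $\langle x\rangle^{-2}$, so in particular $|\nabla_x\partial_\xi^\beta\re(a_2)(x,\xi)|\lesssim\langle x\rangle^{-1}\langle\xi\rangle^{2-|\beta|}$. Proposition \ref{prop.admiss.a} (applied with $a_m=a_3$, $a_{m-1}=\re(a_2)$) then yields that $\re(a)=a_3+\re(a_2)$ is admissible, with the same G\aa rding weight as $a_3$. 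As for $\im(a_2)$: since each $X_j^\ast X_j$ is self-adjoint its Weyl symbol is real, so the imaginary contributions to $a_2$ come only from the lower-order part of the Weyl symbol of $X_1$ and from the Poisson-bracket terms, all of which contain a $\partial_x a_{ij}$; hence $|\im(a_2)(x,\xi)|\lesssim\varepsilon\langle x\rangle^{-2}|\xi|^2$, and for $\varepsilon$ small enough this is \eqref{cond.kdv.im.prop} with a constant $c_0=O(\varepsilon)<1$.

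\textbf{Main obstacle.} The delicate point is the lower bound in \eqref{ellderivatives}: it is a two-sided, uniform-in-$x$ estimate on $|\nabla_\xi a_3|$, and since $a_3$ degenerates along many directions of $\xi$, it can hold only if one partial $\partial_{\xi_k}X_1$ is non-degenerate (condition \eqref{cond.Xk}) while the competing entries $a_{kj}$, $j\ge2$, are quantitatively small (condition \eqref{cond.akj}), so that the cross term $2X_1\sum_{j\ge2}X_j a_{kj}$ cannot cancel the good term $a_{k1}\bigl(3X_1^2+\sum_{j\ge2}X_j^2\bigr)$; closing the constants is exactly the content of \eqref{cond.akj}.
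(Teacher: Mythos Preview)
Your proposal is correct and follows essentially the same route as the paper: verify \eqref{ellderivatives} and \eqref{smallcoeffa2a3} for $a_3$ (the lower bound via $|\partial_{\xi_k}a_3|$ and a Young-inequality absorption of the cross term $2X_1\sum_{j\ge2}a_{kj}X_j$), invoke Proposition~\ref{propsuffadmiss}, then pass to $\re(a)$ by Proposition~\ref{prop.admiss.a} after observing that every sub-principal term in the Weyl composition carries at least one factor $\partial_x a_{ij}$. The only differences are cosmetic: the paper applies Young's inequality in the form $2|X_1\cdot a_{kj}X_j|\le\delta^{-1}X_1^2+\delta a_{kj}^2X_j^2$ (so the parameter range is $(n-1)/(3C)<\delta<C/\sup|a_{kj}|^2$), whereas you factor out $|a_{kj}|$ first; and the paper carries out the Weyl composition explicitly, identifying $\re(a_2)$ with a real symbol $r_1\in S^1$ and writing $\im(a_2)$ as the concrete combination $-\tfrac12\sum_j\bigl(\{X_1,X_j^2\}+2\im(d_{X_1})X_j^2+4\im(d_{X_j})X_1X_j\bigr)$, while you argue more abstractly that the sub-principal and imaginary contributions all involve derivatives of the coefficients. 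Both arguments yield the same conclusions.
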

\proof
The proof is based on the use of Proposition \ref{propsuffadmiss} and Proposition \ref{prop.NC2}. We will first prove that the principal symbol $a_3$ of $A$ is admissible. Afterwards, we will show that $\re(a)$, viewed as a perturbation  of $a_3$, is admissible too, and that \eqref{cond.kdv.im.prop} holds.

Observe that, due to \eqref{cond.kdv.small.}, we have that \eqref{smallcoeffa2a3} is trivially satisfied by $a_3$. Hence, if we prove that $|\nabla_\xi a_3(x,\xi)|\gtrsim |\xi|^2$ for all $(x,\xi)\in\R^n \times \R^n$, which amounts to \eqref{ellderivatives}, then by Proposition \ref{propsuffadmiss} we can conclude the admissibility of $a_3$ as desired. Because of that, we now focus on showing the above property of $\nabla_\xi a_3$.

We denote by $X_j(x,\xi)$ and $X_j^\w(x,\xi)$ the Kohn-Nirenberg and the Weyl symbol of the homogeneous first order operator $X_j(x,D)$, respectively. In particular, by the change of quantization formula (see, for instance, \cite{Fo89} Theorem 2.4.1. and \cite{FRO} Theorem 4.12) we have
\begin{align}
    X_j(x,\xi)=\sum_{i=1}^na_{ij}(x)\xi_i\quad \text{and}\quad X_j^\w(x,\xi)=X_j(x,\xi)+d_{X_j}, \quad \text{where}\quad d_{X_j}:=\frac 1 2\sum_{i=1}^nD_ia_{ij}(x),
\end{align}
therefore the principal symbol $a_3$ of $A$, which is the same in both quantizations, is given by
\begin{align}
a_3(x,\xi)=X_1(x,\xi)\sum_{j=1}^nX_j(x,\xi)^2.
\end{align}
\indent Now, by using \eqref{cond.Xk} and the smallness hypotheses on the coefficients of the vector fields $X_j$, and choosing $\delta>0$ such that $(n-1)/(3C)<\delta<C /\sup_x\{|a_{kj}(x)|^2, j=2,\ldots ,n\}$ (which is possible due to \eqref{cond.akj}), we obtain
\begin{align}
    |\nabla_\xi a_3(x,\xi)|&\geq |\partial_{\xi_k}a_3|\\
    &=\abs{3\partial_{\xi_k}X_1(x,\xi)X_1(x,\xi)^2 +\partial_{\xi_k}X_1(x,\xi)\sum_{j=2}^nX_j(x,\xi)^2+2\sum_{j=2}^nX_1(x,\xi)(\partial_{\xi_k}X_j)(x,\xi)X_j(x,\xi)}\\
    &\geq (3C-\frac{n-1}{\delta})X_1(x,\xi)^2+ \sum_{j=2}^n(C-\delta|a_{kj}(x)|^2)X_j(x,\xi)^2\\
    & \gtrsim \sum_{j=1}^nX_j(x,\xi)^2\gtrsim |\xi|^2,\quad \forall(x,\xi)\in \R^n \times \R^n,\label{eq.grad}
\end{align}
which gives \eqref{ellderivatives} and hence the admissibility of $a_3$. 

Next, to prove the admissibility of $\re(a)$, it suffices to check the behavior of $\mathrm{Re}(a_2)$, where $a_2\in S^2(\R^n)$ is the symbol of order two satisfying
$$a=a_3+a_2,$$
where $a$ is the Weyl symbol of $A$.

Below, we shall write $X_j^\w, X_j$, in place of $X_j^\w(x,\xi), X_j(x,\xi)$, when no confusion arises.
Then, dropping the dependence on $(x,\xi)$, the Weyl composition formula gives that
\begin{align}
   a&= \sum_{j=1}^nX_1^\w\# X_j^\w\#X_j^\w\\
   &=X_1^\w\#\sum_{j=1}^n({X_j^\w}^2-\frac{i}{2}\{X_j^\w,X_j^\w\}+r_0)\\
   &=X_1^\w\#\left(\sum_{j=1}(X_j^2+2d_{X_j}X_j)+r_0+d_{X_j}^2\right)\\
   &= \sum_{j=1}\left(X_1\#X_j^2+ d_{X_1}X_j^2+
   2d_{X_j}X_1X_j-\frac{i}{2}2\{X_1,d_{X_j}X_j\}+2d_{X_1}d_{X_j}X_j-\frac i 2 \{d_{X_j},X_j^2\}+X_1 \#(r_0+d_{X_j}^2)\right)+r'_0\\
   &= a_3-\frac{i}{2}\sum_{j=1}^n(\{X_1,X_j^2\}+2id_{X_1}X_j^2+
   i4d_{X_j}X_1X_j)+r_1+r'_0\\
   &=a_3+a_2,
\end{align}
where $r_0\in S^0(\R^n)$ is given by
\begin{align}
r_0&=2^{-2}\sum_{\substack{|\alpha|+|\beta|=2}}\frac{(-i)^{|\alpha|+|\beta!}}{\alpha!\beta!}(-1)^{|\beta|}(\partial_\xi^\alpha\partial_x^\beta X_j^\w)(\partial_\xi^\beta\partial_x^\alpha X_j^\w)\\
&=2^{-2}\sum_{\substack{|\alpha|=1,|\beta|=1}}(\partial_\xi^\alpha\partial_x^\beta X_j^\w)(\partial_\xi^\beta\partial_x^\alpha X_j^\w),
\end{align}
  $r_1\in S^1_{1,0}(\R^n)$ is real valued, and $r_0'\in S^0_{1,0}(\R^n)$ is  a symbol of order 0 possibly different at any appearance. Note that both $r_1$ and $r_0'$ are symbols of differential operators having coefficients depending on the derivatives of the vector field's coefficients, that is, depending on $\partial^\alpha_xa_{ij}, i,j=1,\ldots,n,$ and $\alpha\in \mathbb{N}^n$. 

Now, since $d_{X_j}$, for all $j=1,\ldots,n$, is either 0 or purely imaginary, using \eqref{cond.kdv.small.} we get, for all $(x,\xi)\in\R^n \times \R^n$,
\begin{align}
    \abs{\nabla_x\partial_\xi^\beta \re( a_2)}=\abs{\nabla_x\partial_\xi^\beta r_1}\lesssim \langle x\rangle^{-1}\langle \xi\rangle^{1-|\beta|},\quad \forall \beta \in \mathbb{N}^n, \label{eq.a2}
\end{align}
and 
\begin{align}
    \abs{\mathrm{Im}(a_2)}=\left|-\frac{1}{2}\sum_{j=1}^n(\{X_1,X_j^2\}+2\im (d_{X_1})X_j^2+
  \im( d_{X_j})X_1X_j)\right|.\label{eq.ima2.}
\end{align}
We use \eqref{eq.a2} and \eqref{eq.ima2.} to prove the admissibility of $\re (a)$ and \eqref{cond.kdv.im.prop}, respectively.

Inequality \eqref{eq.a2} together with \eqref{eq.grad} give the admissibility of $\re(a)$ by Proposition \ref{prop.admiss.a}.

Identity \eqref{eq.ima2.} and the hypotheses satisfied by $a_{ij}$ give \eqref{cond.kdv.im.prop}.
To see this, it suffices to expand the Poisson brackets $\{X_1,X_j\}$ in \eqref{eq.ima2.} and note that $\im(a_2)$ is a combination of symbols  whose coefficients are space-derivatives of the vector field's coefficients $a_{ij}$.
As for the other terms in $\im(a_2)$, they also include derivatives of the $a_{ij}$'s, since $\im (d_{X_j})=-\sum_{i=1}^n\partial_i a_{ij}(x)$, for all $j=1,\ldots,n$. Therefore, for $\varepsilon$ as in \eqref{cond.kdv.small.} sufficiently small, we reach
\begin{align}
    \abs{\mathrm{Im}(a_2)}\leq c_0 \langle x\rangle ^{-2}|\xi|^2,\quad \forall(x,\xi)\in \R^n \times \R^n,\label{eq.im}
    \end{align} 
for some $c_0>0$, which proves \eqref{cond.kdv.im.prop} and concludes the proof. 
\endproof
    
\begin{remark}
   Let us stress that \eqref{cond.kdv.im.prop} in Proposition \ref{prop.kdv-type} is not necessary for the admissibility of $\re(a)$, but is required to obtain the smoothing estimates in Theorem \ref{thm.LIVP}.
\end{remark}

\begin{example}[Operators defining ultrahyperbolic Schr\"odinger equations]
    Other interesting examples of admissible operators are given by ultrahyperbolic differential operators of order two, which were used in \cite{KPRV05} to define ultrahyperbolic Schr\"odinger operators. A toy model of such operators is $\Delta_1-\Delta_2$, where, for $k\leq n$, $\Delta_1$ is the (positive) Laplacian in the first group of $k$ variables and $\Delta_2$ is the (positive) Laplacian in the second group of $n-k$ variables, that is the operator whose principal part's coefficients are determined by the matrix
    \[
    \begin{pmatrix}
        \mathrm{I_k} & 0 \\
        0 & -\mathrm{I_{n-k}}
    \end{pmatrix}.
    \]
    More generally, ultrahyperbolic differential operators of order two with variable coefficients are defined as 
    \[
    A=\sum_{i,j=1}^n -\partial_{x_i} a_{ij}(x)\partial_{x_j},
    \]
    where the coefficient matrix of the principal part $\mathsf{A}_2(x)=(a_{ij}(x))_{i,j=1, \dots, n}$, for all $x \in \R^n$, is real, symmetric and satisfies the non-degeneracy condition
    \begin{equation}\label{nondegeneracy}
    \exists \nu>0\quad \text{s.t.}\quad \nu^{-1}\abs{\xi} \leq \abs{\mathsf{A}_2(x) \cdot \xi}\leq \nu \abs{\xi}, \quad \forall x,\xi \in \mathbb{R}^n.
    \end{equation}
     Note that the symbol $a$ of $A$, which belongs to $S^2(\R^n)$, has the form 
\[
a(x,\xi)=a_2(x,\xi)+a_1(x,\xi)=-\sum_{i,j=1}^n a_{ij}(x) \xi_i \xi_j-\sum_{k=1}^n\Bigl(\sum_{\ell=1}^n\partial_{x_\ell}a_{\ell k}(x)\Bigr) \xi_k,
\]
    where $a_j\in S^j(\R^n)$ for $j=1,2$. In addition, we assume the following smallness condition: there exists $\varepsilon>0$ sufficiently small such that, for all $ \alpha,\beta \in \mathbb{N}_0^n$, $\abs{\beta}\geq 1$, 
\begin{equation}\label{small.ultrahyp}
\abs{\partial_x^\beta \partial_\xi^\alpha a_j(x,\xi)} \leq C_{\alpha\beta} \varepsilon \langle x \rangle^{-2}\abs{\xi}^{j-\abs{\alpha}}, \quad \forall (x,\xi) \in \R^n \times \R^n, \quad \mathrm{for} \ j=1,2.
\end{equation}
Since $\partial_{\xi_j}a_2(x,\xi)=2 \sum_{i=1}^na_{ij}(x)\xi_i=2(\mathsf{A}_2(x) \cdot \xi)_j$, for all $j=1,\dots, n$, by \eqref{nondegeneracy} we have 
\[
2\nu^{-1} \abs{\xi}\leq \abs{\nabla a_2(x,\xi)} \leq 2 \nu \abs{\xi}, \quad \forall \xi \in \mathbb{R}^n,
\]
which, in turn, shows that $a_2$ is admissible by Proposition \ref{propsuffadmiss}. Finally, since $a_1$  satisfies the smallness condition above, we get that $a$, viewed as a perturbation of $a_2$, is admissible by Proposition \ref{prop.admiss.a}.

By the results in Section \ref{sec.smoothing.estimates}, the admissibility of $A$ and the smallness assumption $\eqref{small.ultrahyp}$ imply the smoothing estimates for the ultrahyperbolic Schr\"odinger operator $D_t-A$ (see Remark \ref{rmk.conf.thm.LIN}). Specifically, we retrieve the results established in \cite{KPRV05} simply by verifying the admissibility of the ultrahyperbolic operator $A$, which is guaranteed by the smallness assumptions. 
\end{example}
\begin{example}[Operators defining  other constant coefficient linear KdV-type equations]\label{Kdvtype2}
Let us now consider on $\R^n$ the operator
\[
A=\sum_{\ell=1}^n D_{x_\ell} \Delta 
\]
with symbol
\[
a(x,\xi)=\sum_{\ell=1}^n \xi_\ell \abs{\xi}^2.
\] 
We will show that the admissibility of $A$ does not depend on the dimension of the ambient space, which, instead, may occur for other operators of order three even when the coefficients are constant. This happens,  for instance, for operators of the form 
$$ A'=\sum_{\ell=1}^{n-k} D_{x_\ell} \Delta$$
when $n>3$ and $1\leq k\leq n-2$.

Since $A$ is an operator with constant coefficients, we have that \eqref{smallcoeffa2a3} is trivially satisfied. To prove that condition \eqref{ellderivatives} holds, we write 
\[
a(x,\xi)=\sum_{\ell=1}^n \xi_\ell \, \Bigl(\sum_{k=1}^n \xi_k^2\Bigr)=\sum_{\ell=1}^n\xi_\ell^3+\sum_{\ell=1}^n\xi_\ell \sum_{k\neq \ell}\xi_k^2,
\]
which yields
\begin{align}
\partial_{\xi_1}a&= 3\xi_1^2+\sum_{k=2}^n\xi_k^2+2\xi_1\sum_{k\neq1}\xi_k=2\xi_1^2+\sum_{j=1}^n\xi_j^2+2\xi_1\sum_{k>1}\xi_k\\
\partial_{\xi_2}a&= 3\xi_2^2+\sum_{k\neq 2}\xi_k^2+2\xi_2\sum_{k\neq 2}^n\xi_k= 2\xi_2^2+\sum_{j=1}^n\xi_j^2+2\xi_2\sum_{k>2}\xi_k+2\xi_1\xi_2\\
\partial_{\xi_3}a&= 3\xi_3^2+\sum_{k\neq 3}\xi_k^2+2\xi_3\sum_{k\neq 3}^n\xi_k= 2\xi_3^2+\sum_{j=1}^n\xi_j^2+2\xi_3\sum_{k>3}\xi_k+2\xi_1\xi_3+2\xi_2\xi_3\\
&\ldots \\
&\ldots \\
\partial_{\xi_{n-1}}a&=3\xi_{n-1}^3+\sum_{k\neq n}\xi_k^2+2\xi_{n-1}\sum_{k\neq n-1}\xi_k=2\xi_{n-1}^2+\sum_{j=1}^n\xi_j^2+2\xi_{n-1}\sum_{k>n-1}\xi_k+2\xi_1\xi_{n-1}+\ldots+2\xi_{n-2}\xi_{n-1}\\
\partial_{\xi_n}a&=3\xi_n^3+\sum_{k\neq n}\xi_k^2+2\xi_n\sum_{k\neq n}\xi_k=2\xi_n^2+\sum_{j=1}^n\xi_j^2+2\xi_1\xi_n+\ldots+2\xi_{n-2}\xi_n+2\xi_{n-1}\xi_n.
 \end{align}
Recall that, by the multinomial formula,
\begin{align}
    \left(\sum_{j=1}^n \xi_j \right)^2=\sum_{j=1}^n\xi_j^2+2\xi_1\sum_{k>1}\xi_k+2\xi_2\sum_{k>2}\xi_k+\ldots+2\xi_{n-1}\xi_n,
\end{align}
therefore
\begin{align}
    \sum_{j=1}^n\partial_{\xi_j}a&=(n+2)\sum_{j=1}^n\xi_{j}^2+4\xi_1\sum_{k> 1}^n\xi_k+4\xi_2\sum_{k> 2}\xi_k+\ldots +4\xi_{n-1}\sum_{k> n-1}\xi_k\\
&=n\sum_{j=1}^n\xi_{j}^2+2(\sum_{j=1}^n\xi_j)^2\gtrsim |\xi|^2,
\end{align}
and
\begin{align}
    |\xi|^2\lesssim \sum_{j=1}^n(\partial_{\xi_j}a)\leq \sum_{j=1}^n|\partial_{\xi_j}a|\lesssim |\nabla_\xi a|,
\end{align}
which proves \eqref{ellderivatives} and the admissibility of $A$ by Proposition \ref{propsuffadmiss}. 
Finally, note that the operator $D_t-A$ describes a (linear) KdV-type operator and that the classical smoothing estimates for $D_t-A$ (see, for instance, \cite{CS89}, \cite{RS12} or its generalization \cite{FR24}, and references therein) can be recovered by using the admissibility of $A$ and
Theorem \ref{thm.LIVP} (see Remark \ref{rmk.admiss.const.coeff} and Remark \ref{rmk.conf.thm.LIN}).
\end{example}

\section{Smoothing estimates}\label{sec.smoothing.estimates}
In this section we prove the local smoothing effect for evolution equations with possibly variable coefficients defined through a class of operators of order $m=2,3,$ having admissible real part. These operators include, for instance, those discussed in Subsection \ref{subs.example}. 
More precisely, we focus on the study of the IVP
\begin{equation}\label{LIVP}
\begin{cases}
\partial_t u -iAu=f \quad \quad \mathrm{on} \quad \R^n \times ]0,T[, \quad T>0, \\
u(0,\cdot)=u_0 \quad \quad \mathrm{on} \quad \R^n,
\end{cases}
\end{equation}
 where $u_0 \in H^s(\R^n)$, $f$ has suitable regularity (see Theorem \ref{thm.LIVP}), and $A=Op^\w(a)\in \Psi^m(\R^n)$ is a pseudo-differential operator of order $m=2,3,$ satisfying the following conditions:
\begin{itemize}
    \item The Weyl symbol $a \in S^m(\mathbb{R}^n)$ of $A$ is of the form 
\begin{align}
    a (x,\xi)= a_m(x,\xi)+a_{m-1}(x,\xi),\label{cond.a}
\end{align}
where $a_j \in S^j(\R^n)$ for all $j=m-1, m$, and $a_m$ is real valued; 
\item The real part $\re(a) \in S^m(\R^n)$
satisfies \eqref{ellderivatives} and \eqref{smallcoeffa2a3};
\item The imaginary part $\im(a)= \im (a_{m-1}) \in S^{m-1}(\R^n)$ satisfies 
\begin{align}  
|\im(a_{m-1})(x,\xi)|\leq c_0\lambda(\abs{x})|\xi| ^{m-1},\quad \forall (x,\xi)\in \R^n \times \R^n, \label{cond.ima}
\end{align}
for some $c_0>0$ and for $\lambda(\abs{x})$ as in \eqref{smallcoeffa2a3}.
   \end{itemize}

Note that conditions \eqref{ellderivatives} and \eqref{smallcoeffa2a3} ensure that $\re(a)=a_m+\re(a_{m-1})$ is admissible with Gårding weight $q$ and Gårding constant $C$, for some $C>0$ and some $q\in  C^\infty(\R^n \times \R^n)$ satisfying, for all $\alpha,\beta\in\mathbb{N}_0^n$, 
   \begin{align}\label{qSG}
    \abs{ \partial_x^\beta \partial_\xi^\alpha q(x,\xi)}\leq 
   \left\{ \begin{array}{ll}
    C_{\alpha\beta} \langle x\rangle \langle \xi \rangle^{-\abs{\alpha}},     & \text{if}\,\,|\beta|=0,  \\ 
     C_{\alpha\beta}  \langle \xi \rangle^{-\abs{\alpha}},    &  \text{if}\,\,|\beta|\geq 1 , 
    \end{array}\right.
\end{align}
for every $(x,\xi)\in\R^n\times\R^n$.

We emphasize that we are not assuming $a_m, a_{m-1}$ to be homogeneous symbols here. The key feature of the decomposition of $a$ is that the pseudo-differential operator of order $m$ in $A$, that is $Op^\w(a_m)$, has a\textit{real-valued} admissible symbol.

We also stress that, although the case $m=2$ is already known in the literature (see \cite{CKS, D96,D00, KPRV05}),  we  provide a new method of proof in this case.  Our method relies on considering operators for which the real part of symbol, $\re(a)$, satisfies \eqref{ellderivatives} and \eqref{smallcoeffa2a3} (see Remark \ref{rmk.m=2}) and such that \eqref{cond.a} and \eqref{cond.ima} hold. In other words, some of these conditions replace the nontrapping condition in \cite{CKS, D96,D00, KPRV05}) (smallness conditions can be found in these works as well). As for the case $m=3$, it is completely new for operators with variable coefficients, while other variable coefficient cases have been proved in \cite{KS} via Fourier methods that cannot be applied in our framework. 

One may wonder why we did not consider the general case $m\geq 2$ here. The reason is that there is a non-trivial difference between the cases $m=2,3,$ and $m>3$. This distinction is due to the fact that when $m>3$ we are only able to prove smoothing estimates with a big "error term", which means that the estimates include an additional unpleasant term with respect to the standard ones. In contrast, when $m=2,3$, we arrive at the expected standard smoothing estimates. 
A brief discussion of this topic can be found in Remark \ref{rmk.Gard-FP} below.

To prove Theorem \ref{thm.LIVP} on the smoothing effect of $D_t-A$, with $A$ satisfying the aforementioned properties, we need Lemma \ref{lem.smoothing} which provides (i)-(iii) of Theorem \ref{thm.LIVP}.

\begin{lemma}\label{lem.smoothing}
    Let $s\in\R$, $m=2,3$, and let $\lambda$ and $A$ be as in Theorem \ref{thm.LIVP}. Then, there exists a positive constant $C_1=C_1(A,n)$ such that, for all $u\in C([0,T];H^{s+m}(\R^n))\cap C^1([0,T];H^{s}(\R^n))  $,
    \begin{itemize}
    \item [(i)] 
    \begin{align}
    \sup_{0\leq t\leq T}\|u\|_s\lesssim  e^{C_1T} \left(\|u(0,\cdot)\|_s+\int_0^T \|(\partial_t-iA)u\|_s dt\right),
\end{align}
\item [(i')] 
    \begin{align}
    \sup_{0\leq t\leq T}\|u\|_s\lesssim  e^{C_1T} \left(\|u(T,\cdot)\|_s+\int_0^T \|(\partial_t-iA)^*u\|_s dt\right),
\end{align}
\item [(ii)] 
\begin{align}
    \sup_{0\leq t \leq T}\|u\|^2_s+\int_0^T(\lambda(|x|) \Lambda^{s+\frac{m-1}{2}}u(t,\cdot),\Lambda^{s+\frac{m-1}{2}}u(t,\cdot))_0\,dt\lesssim e^{C_1T} \left(\|u(0,\cdot)\|_s^2+\int_0^T \|(\partial_t-iA)u\|_s^2 dt \right),
\end{align}
\item [(ii')] 
\begin{align}
    &\sup_{0\leq t \leq T}\|u\|^2_s+\int_0^T(\lambda(|x|) \Lambda^{s+\frac{m-1}{2}}u(t,\cdot),\Lambda^{s+\frac{m-1}{2}}u(t,\cdot))_0\,dt\lesssim e^{C_1T} \left(\|u(T,\cdot)\|_s^2+\int_0^T \|(\partial_t-iA)^*u\|_s^2 dt\right),
\end{align}
\item [(iii)]  
\begin{align}
    &\sup_{0 \leq t \leq T}\|u\|^2_s+\int_0^T\lambda(|x|) (\Lambda^{s+\frac{m-1}{2}}u(t,\cdot),\Lambda^{s+\frac{m-1}{2}}u(t,\cdot))_0\,dt\\
    &\lesssim e^{C_1T} \left(\|u(0,\cdot)\|_s^2+\int_0^T \lambda(|x|)^{-1}\big(\Lambda^{s-\frac{m-1}{2}}(\partial_t-iA)u,\Lambda^{s-\frac{m-1}{2}}(\partial_t-iA)u\big)_0 dt\right),
\end{align}
\item [(iii')]  
\begin{align}
    &\sup_{0\leq t\leq T}\|u\|^2_s+\int_0^T\lambda(|x|) (\Lambda^{s+\frac{m-1}{2}}u(t,\cdot),\Lambda^{s+\frac{m-1}{2}}u(t,\cdot))_0\,dt\\
    &\lesssim e^{C_1T} \left(\|u(T,\cdot)\|_s^2+\int_0^T \lambda(|x|)^{-1}\big(\Lambda^{s-\frac{m-1}{2}}(\partial_t-iA)^*u,\Lambda^{s-\frac{m-1}{2}}(\partial_t-iA)u\big)_0dt\right).
\end{align}
\end{itemize}

\end{lemma}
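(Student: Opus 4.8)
\emph{Plan of proof.} The inequalities (i)--(iii) and their primed versions all follow from one energy identity, so let me set up the latter. First reduce to $s=0$: if $v=\Lambda^s u$ then $(\partial_t-iA_s)v=\Lambda^s(\partial_t-iA)u$ with $A_s=\Lambda^sA\Lambda^{-s}=\mathrm{Op}^\w(\tilde a)$, and by Theorem \ref{thm.weyl.comp} one has $\tilde a=a+\tilde r$ with $\tilde r\in S^{m-1}(\R^n)$; using \eqref{smallcoeffa2a3} and Proposition \ref{prop.admiss.a} one checks that $\re(\tilde a)$ is still admissible \emph{with the same G\aa rding weight $q$} satisfying \eqref{qSG}, while $\im(\tilde a)$ still obeys a bound of the type \eqref{cond.ima} modulo harmless lower order. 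Since the quantities on the left of (i)--(iii) transform accordingly (the commutators of $\Lambda^s$ with $e^{\mathcal P}$, with $\Lambda^{(m-1)/2}$ and with $\lambda(|x|)$ being of lower order), it suffices to argue for $s=0$. Apply Lemma \ref{newDoilem} to the real admissible symbol $\re(a)$ to get a real $p\in S^0(\R^n)$ with
\begin{equation}
H_{\re(a)}p(x,\xi)\ge \lambda(|x|)|\xi|^{m-1}-C,\qquad (x,\xi)\in\R^n\times\R^n,
\end{equation}
set $\mathcal P:=\mathrm{Op}^\w(p)$ (bounded and self-adjoint on every $H^\sigma(\R^n)$) and let $e^{\mathcal P}$ be the associated bounded invertible operator, a pseudo-differential operator of order $0$ with principal symbol $e^p$. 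The \emph{conjugated energy} is $E(t):=\|e^{\mathcal P}u(t)\|_0^2$, which is equivalent to $\|u(t)\|_0^2$ uniformly in $t\in[0,T]$.

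Now differentiate. Writing $iA=i\,\mathrm{Op}^\w(\re a)-\mathrm{Op}^\w(\im a_{m-1})$ with both summands self-adjoint, and putting $v=e^{\mathcal P}u$, so that $\partial_t v=i\mathcal Bv+e^{\mathcal P}f$ with $\mathcal B=e^{\mathcal P}\,\mathrm{Op}^\w(\re a)\,e^{-\mathcal P}+i\,e^{\mathcal P}\,\mathrm{Op}^\w(\im a_{m-1})\,e^{-\mathcal P}$, one gets
\[
\tfrac{d}{dt}E=-\tfrac{1}{i}\big((\mathcal B-\mathcal B^*)v,v\big)_0+2\re(e^{\mathcal P}f,v)_0 .
\]
By the Weyl composition formula the skew-adjoint part $\tfrac{1}{2i}(\mathcal B-\mathcal B^*)$ equals $\mathrm{Op}^\w\!\big(H_{\re(a)}p+\im(a_{m-1})\big)$ \emph{modulo $\Psi^{m-3}(\R^n)$}: the skew-symmetry of $\#$ (only the odd Moyal terms survive in a commutator, and the analogous cancellation of the order-$(m-2)$ term in the anticommutator) is what produces the order gain, and crucially $H_{\re(a)}p$ and $\im(a_{m-1})$ appear with the \emph{same} multiplicity. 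Since $|\im(a_{m-1})|\le c_0\lambda(|x|)|\xi|^{m-1}$ with $c_0<1$, the real symbol $H_{\re(a)}p+\im(a_{m-1})-(1-c_0)\lambda(|x|)|\xi|^{m-1}+C$ is $\ge 0$, so the Fefferman--Phong inequality (Theorem \ref{thm.SGFP}) yields
\[
\tfrac{d}{dt}E+2(1-c_0)\big(\mathrm{Op}^\w(\lambda(|x|)|\xi|^{m-1})v,v\big)_0\le C_1E+2\re(e^{\mathcal P}f,v)_0 ,
\]
where the remainders from the composition formula ($\Psi^{m-3}$), from Fefferman--Phong ($\|v\|^2_{(m-3)/2}$) and from the $s$-reduction have been absorbed into $C_1E$. \textbf{This is exactly where $m\le 3$ enters}: then $m-3\le 0$ and $(m-3)/2\le 0$, so every such remainder is $\lesssim\|v\|_0^2\simeq E$; for $m>3$ one is left with a genuine positive-order error, which is why the statement is confined to $m=2,3$.

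It remains to close the estimate. For (i), bound $|2\re(e^{\mathcal P}f,v)_0|\le C\|f\|_0\sqrt E$ and drop the (essentially nonnegative) smoothing term, getting $\tfrac{d}{dt}\sqrt E\le \tfrac{C_1}{2}\sqrt E+C\|f\|_0$ and hence (i) after undoing the $s$-reduction. For (ii), bound the forcing by $\|f\|_0^2+E$, apply Gr\"onwall to $E$, then reinsert the now-bounded smoothing term; to identify it, note that $\mathrm{Op}^\w(\lambda(|x|)|\xi|^{m-1})=\Lambda^{(m-1)/2}\lambda(|x|)\Lambda^{(m-1)/2}$ modulo operators whose symbols carry an extra power of $\langle x\rangle^{-1}$, which are reabsorbed in the smoothing term by a weighted interpolation. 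For (iii), before Cauchy--Schwarz distribute $\Lambda^{\pm(m-1)/2}$ and the factors $\lambda(|x|)^{\mp1/2}$ in the pairing $(e^{\mathcal P}f,v)_0$ (using the weighted calculus for $\mathrm{Op}^\w(\lambda(|x|)^{\pm1/2})$): the $v$-side gives $\eta\,(\mathrm{Op}^\w(\lambda|\xi|^{m-1})v,v)_0$, absorbed for $\eta$ small, and the $f$-side gives $\lambda(|x|)^{-1}(\Lambda^{-\frac{m-1}{2}}(\partial_t-iA)u,\Lambda^{-\frac{m-1}{2}}(\partial_t-iA)u)_0$. Finally (i$'$)--(iii$'$) follow from (i)--(iii): $(\partial_t-iA)^*=-\partial_t+iA^*$, and the time reversal $t\mapsto T-t$ turns this into $\partial_\tau w=i(-A^*)w+\tilde f$, where $-A^*=\mathrm{Op}^\w(-\bar a)$ has real part $-\re(a)$, admissible with G\aa rding weight $-q$ and the same constants, and imaginary part $\im(a_{m-1})$, so \eqref{cond.ima} still holds and (i)--(iii) apply verbatim to $w$.

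\textbf{Main obstacle.} The delicate point is to make the smallness of $\im(a_{m-1})$ interact correctly: it is precisely the choice of the \emph{conjugated} energy $\|e^{\mathcal P}u\|_0^2$ (rather than a weighted energy $(\mathrm{Op}^\w(k)u,u)_0$, where the Doi term and $\im(a_{m-1})$ would enter with mismatched multiplicities) that makes the positive term $H_{\re(a)}p$ appear with the same factor $2$ as $\im(a_{m-1})$, so that $c_0<1$ — and nothing stronger — suffices. Coupled with this, one has to keep every error term of order $\le 0$, which forces both the use of the Weyl quantization (for the extra cancellation in $[\,\cdot\,,\cdot\,]$ and $\{\cdot,\cdot\}$) and of Fefferman--Phong rather than sharp G\aa rding, and pins $m$ to $2,3$.
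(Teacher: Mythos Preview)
Your argument is correct and follows essentially the same route as the paper: construct $p\in S^0$ via Lemma~\ref{newDoilem}, differentiate a conjugated $L^2$-energy, identify the leading contribution as $\mathrm{Op}^\w(H_{\re a}p+\im a_{m-1})$ modulo $\Psi^{m-3}$, apply Fefferman--Phong (sharp G\aa rding when $m=2$), and close by Gr\"onwall. The paper packages this slightly differently in two respects. First, it does \emph{not} reduce to $s=0$: it works directly with $N(u)^2=\|E\Lambda^s u\|_0^2+\|u\|_{s-2}^2$, and the term corresponding to your $s$-reduction appears as the commutator piece $(II_2)=2\re(E[\Lambda^s,iA]u,E\Lambda^su)_0$, whose symbol $b$ is shown, via \eqref{smallcoeffa2a3}, to carry the factor $\varepsilon$ that allows its absorption alongside $\im a_{m-1}$; your remark that ``$\im(\tilde a)$ still obeys \eqref{cond.ima} modulo harmless lower order'' is really this $\varepsilon$-smallness statement, and it is worth saying so explicitly. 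Second, the paper takes $E=\mathrm{Op}^\w(e^p)$ (the Weyl quantization of the \emph{symbol} $e^p$) rather than the operator exponential $e^{\mathcal P}$; this avoids having to justify that $e^{\mathcal P}\in\Psi^0$ with principal symbol $e^p$, which is true but not entirely free. The auxiliary $\|u\|_{s-2}^2$ in $N(u)$ serves to make the equivalence $N(u)\simeq\|u\|_s$ immediate via the parametrix identity $\widetilde E E=I+R_{-2}$. Your ``main obstacle'' paragraph --- that the conjugated energy, as opposed to a weighted one, is what makes $H_{\re a}p$ and $\im a_{m-1}$ enter with the \emph{same} factor so that $c_0<1$ suffices --- is a genuinely useful observation that the paper does not make explicit.
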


\begin{proof}

Here we focus on proving (i), (ii), and (iii) only, since (i'), (ii'), and (iii') can be derived analogously (see the end of the proof for more details about these cases).

We define, for $s \in \R$, the operators 
\[
\Lambda^s:=Op^{\w}(\langle \xi\rangle^s) \in \Psi^s(\R^n) \quad \mathrm{and} \quad E:=Op^{\w}(e^p) \in \Psi^0(\R^n), 
\]
where $p \in S^0(\R^n)$ is  such that 
$$ H_{\re(a)}p(x,\xi)\geq \lambda(|x|)|\xi|^{m-1}-D,\quad \forall(x,\xi)\in \R^n\times \R^n,  $$
for some $D>0$. Note that such a $p$ exists due to the admissibility of $\re(a)$ and Lemma \ref{newDoilem}.
We also introduce the quantity
\begin{align}
    N(u):=(\|E\Lambda^s u\|^2_0+\|u\|_{s-2}^2)^{1/2},
\end{align}
which defines a norm equivalent to the standard $H^s$-Sobolev norm. To see this it suffices to observe that 
\begin{align}
 \widetilde{E} E =I+R_{-2}, \label{equiv.norm2}  
\end{align}
where $R_{-2}\in \Psi^{-2}(\R^n)$ and 
\begin{align}
 \widetilde{E}:=Op^\w(e^{-p}) \in \Psi^{0}(\R^n).\label{equiv.norm3}
\end{align}
By \eqref{equiv.norm2}, \eqref{equiv.norm3}, and the boundedness properties of pseudo-differential operators, we have
\begin{align}
\|v\|^2_s&=\|\Lambda^sv\|_0^2\leq \|\widetilde{E} E\Lambda^s v-R_{-2}\Lambda^s v\|^2_0+\|v\|^2_{s-2}\\
&\leq C(\| E\Lambda^s v\|_0^2+\|v\|_{s-2}^2 )\leq  C'\|v\|_{s}^2,
\end{align}
which shows the equivalence of $N(\cdot)$ and $\|\cdot\|_s$.

Now, we set $f:=(\partial_t-iA)u$ and estimate $\partial_t N(u)^2= \partial_t \, \norm{E\Lambda^s u}^2_0+\partial_t \, \norm{ u}^2_{s-2}$. To begin with, we calculate $\partial_t \, \norm{ u}^2_{s-2}$. Using the equivalence of $N(\cdot)$ and $\|\cdot\|_s$ and the boundedness property of pseudo-differential operators, we obtain
\begin{align}
    \partial_t\norm{u}_{s-2}^2&=2\re(\Lambda^{s-2}\partial_tu,\Lambda^{s-2}u)_0\\
    &=2\re(\Lambda^{s-2}iAu, \Lambda^{s-2}u)+2\re(\Lambda^{s-2}f,\Lambda^{s-2}u)_0\\
    &\leq C_0 N(u)^2+2\re(\Lambda^{s-2}f,\Lambda^{s-2}u)_0, \label{estim.norm1}
\end{align}
where $C_0$ is a suitable positive constant.
Next, we compute $\partial_t\norm{E\Lambda^su}_0^2$ and get
\begin{align}
\partial_t \, \norm{E\Lambda^s u}^2_0 &=\partial_t (E\Lambda^s u, E\Lambda^s u)_0 \\
&=2 \, \re(E\Lambda^siAu,E\Lambda^su)_0+2\re(E\Lambda^s f,E\Lambda^s u )_0 \\
&=2 \, \re (iAE\Lambda^su,E\Lambda^su)_0 +2 \, \re([E\Lambda^s,iA]u,E\Lambda^su)_0+2\re(E\Lambda^s f,E\Lambda^s u )_0 \\
&=(I)+(II)+2\re(E\Lambda^s f,E\Lambda^s u )_0. \label{eqn19033}
\end{align}

For (I) we have
\[
\begin{split}
(I)&=2 \, \re (Op^\w(ia)E\Lambda^su,E\Lambda^su)_0 \\
&=2 \, \re (Op^\w(ia_m+ia_{m-1})E\Lambda^su,E\Lambda^su)_0 \\
&=-2 \, \re (Op^\w(\im(a_{m-1}))E\Lambda^su,E\Lambda^su)_0,\\
\end{split}
\]
where the last equality follows from the fact that $Op^\w(a_m+\re(a_{m-1}))$ is self-adjoint (since the symbol is real-valued).

For (II) we have
\[
(II)=2 \, \re ([E,iA]\Lambda^su,E\Lambda^su)_0+2 \, \re  (E[\Lambda^s,iA]u,E\Lambda^su)_0=(II_1)+(II_2).
\]
 By the Weyl composition formula (see \eqref{Weyl.comp.formula}), since $a=a_m+a_{m-1}$ and $a_m$ is real valued, the following identities hold
\[
\begin{split}
[E,iA]&=Op^\w(-\bracket{p,a}e^p)+Op^\w(r_{m-3}), \quad r_{m-3} \in S^{m-3}(\mathbb{R}^n), \\
-\bracket{p,a}e^p&=-(H_{a_m}p+H_{a_{m-1}}p)e^p,
\end{split}
\]
which yield 
\begin{align}(II_1)=-2\re(Op^\w(H_{a_m}p+H_{a_{m-1}}p)E\Lambda^su,E\Lambda^su)_0+2\re(Op^\w(r_{m-3})E\Lambda^su,E\Lambda^su)_0.
\end{align}
As for $(II_2)$, we use the identity 
\[
(II_2)= 2\re(E[\Lambda^s,iA](\Lambda^{-s}\widetilde{E}E\Lambda^s+R'_{-2}) u, E\Lambda^s u)_0,
\]
where $R'_{-2}\in \Psi^{-2}(\R^n)$. We then call $b=b(x,\xi)$ the Weyl symbol of the operator $E[\Lambda^s,iA]\Lambda^{-s}\widetilde{E}$, and have, due to condition  \eqref{smallcoeffa2a3}, that
\[
\abs{b(x,\xi)}\lesssim \varepsilon \lambda(\abs{x}) \langle \xi\rangle^2\lesssim \varepsilon \lambda(\abs{x}) |\xi|^{m-1}+\varepsilon c,
\]
for some $c>0$.

Therefore, since $E[\Lambda^s,iA]R'_{-2}\in \Psi^{s}(\R^n)$, 
\begin{align}
    (I)+(II)&=-2 \re (Op^\w(H_{a_m}p+H_{a_{m-1}}p)E\Lambda^su,E\Lambda^su)_0-2 \, \re (Op^\w(\im (a_{m-1}))E\Lambda^su,E\Lambda^su)_0+\\
    &\quad + 2\re(Op^\w(r_{m-3})E\Lambda^su,E\Lambda^su)_0 + 2\re(Op^\w (b) E\Lambda^s u, E\Lambda^s u)_0+\re (E[\Lambda^s,iA]R_{-2} u, E\Lambda^s u)_0\\
   &\leq\!\! -2 \re (Op^\w(H_{a_m+\re(a_{m-1})}p+\!\im(a_{m-1})-\!b)E\Lambda^su,E\Lambda^su)_0\\
   &\quad \!\!\!-2\underbrace{\re(Op^\w(iH_{\im(a_{m-1})}q)E\Lambda^s u, E\Lambda^s u)_0}_{=0}+ C\|u\|_s^2.
\end{align}

Note that, by the admissibility  of $\re(a)$, the choice of $p$,  the hypothesis on $\im(a_{m-1})$,
and  the properties of the symbol $b$, we have, possibly rescaling $p$, that for $C>0$ sufficiently large
$$H_{a_m+\re(a_{m-1})}p+\im(a_{m-1})-b\geq (C-c_0-\varepsilon)\lambda(|x|)|\xi|^{m-1} -(D+\varepsilon c)\geq C'(C-c_0-\varepsilon)\lambda(|x|)\langle\xi\rangle^{m-1} -(D'+\varepsilon c),$$
where $c_0>0$ and $0<C'<1$. By taking the suitable smallness condition, that is for $\varepsilon$ sufficiently small, we get, for two new positive constants that we keep denoting $C$ and $D$,
$$H_{a_m+\re(a_{m-1})}p+\im (a_{m-1})-b-C\lambda(|x|)\langle \xi\rangle ^{m-1} \geq-D.$$
Then, we apply  the Fefferman-Phong inequality when $m=3$ and the sharp G\aa rding inequality when $m=2$ on $Op^\w(H_{a_m+\re(a_{m-1})}p+\im (a_{m-1})-b-C\lambda(|x|)\langle \xi\rangle ^{m-1}+D)$, and obtain
\begin{align}
  & -2 \, \re (Op^\w(H_{a_m+\re(a_{m-1})}p+\im (a_{m-1})-b)E\Lambda^su,E\Lambda^su)_0\leq
       -C(Op^\w(\lambda(|x|)\langle \xi\rangle^{m-1}) E\Lambda^s u,E\Lambda^s u)+C\norm{u}_{s}^{2}.\label{eq.FPSG}
\end{align}
Since $m=2,3$, we have
\begin{align}
    \Lambda^{\frac{m-1}{2}}\lambda(|x|) \Lambda^{\frac{m-1}{2}}&=
    Op^\w(\langle \xi\rangle^\frac{m-1}{2}\#\lambda(|x|) \#\langle \xi\rangle^\frac{m-1}{2})\\
    &=Op^\w\Bigl(\lambda(|x|)\langle \xi\rangle^{m-1} -\cancel{\frac i 2 \left\{\langle \xi\rangle^{\frac{m-1}{2}},\lambda(\abs{x})\langle \xi\rangle^{\frac{m-1}{2}}\right\}}-\cancel{\frac{i}{2}\left\{\lambda(\abs{x}),\langle \xi\rangle^{\frac{m-1}{2}}\right\}\langle\xi\rangle^{\frac{m-1}{2}} }+r_0 \Bigr)\\
&=Op^\w(\lambda(|x|)\langle \xi\rangle^{m-1}+r_0),
\end{align}
for some $r_0\in S^0(\R^n)$. Moreover, since  for every $P\in \Psi^0(\R^n)$ the operator $[\lambda(|x|)^{1/2}\Lambda^{\frac{m-1}{2}}, P]\in \Psi^0(\R^n)$  is  bounded on $L^2(\R^n)$, we have
\begin{align}
    (\Lambda^{\frac{m-1}{2}}\lambda(|x|) \Lambda^{\frac{m-1}{2}} v,v)_0&=\|\lambda(|x|)^{1/2} \Lambda^{\frac{m-1}{2}}v\|_0^2=\|\lambda(|x|)^{1/2} \Lambda^{\frac{m-1}{2}}(\widetilde{E} E-R_{-2})v\|_0^2\\
    &\lesssim\|\lambda(|x|)^{1/2} \Lambda^{\frac{m-1}{2}} Ev\|_0^2+O(\| v\|_0^2),
\end{align}
where $\widetilde{E}:=Op^\w(e^{-p})\in \Psi^0(\R^n)$ is such that $\widetilde{E} E=I+R_{-2}$ with $R_{-2}\in \Psi^{-2}(\R^n)$.
Therefore, from \eqref{eq.FPSG} and the previous considerations, we get
\begin{align} 
(I)+(II)&=-2 \, \re (Op^\w(H_{a_m+\re(a_{m-1})}p+\im (a_{m-1})-b)E\Lambda^su,E\Lambda^su)_0+O(\norm{u}_s^2)\\
&\leq -C(\lambda(|x|) \Lambda^{\frac{m-1}{2}}E\Lambda^su,\Lambda^{\frac{m-1}{2}}E\Lambda^su)+O(\norm{u}_{s}^2) \\
&= -C \| \lambda(\abs{x})^{1/2}\Lambda^{\frac{m-1}{2}}E \Lambda^s u\|_0^2+O(\|u\|^2_s)\\
&\leq -C \|\lambda(\abs{x})^{1/2}\Lambda^{\frac{m-1}{2}} \Lambda^s u\|_0^2+O(\|u\|^2_s), \label{eqn19032}
\end{align}
where the constant $C$ in the last line is a new suitable positive constant.

Now, since $N(\cdot)$ is equivalent to $\|\cdot\|_s$, then, denoting $C_1$ a positive constant possibly different at any appearance, from \eqref{estim.norm1},\eqref{eqn19033} and \eqref{eqn19032} we have
\begin{align}
    \partial_{t}N(u)^2&\leq  -C(\lambda(|x|) \Lambda^{s+\frac{m-1}{2}}u,\Lambda^{s+\frac{m-1}{2}}u)_0+C_1\norm{u}_s^2+C_0N(u)^2+2\re(\Lambda^{s-2}f,\Lambda^{s-2}u)_0+\re( E\Lambda^s f,E\Lambda^su)_0 \\
    &\leq  -C(\lambda(|x|) \Lambda^{s+\frac{m-1}{2}}u,\Lambda^{s+\frac{m-1}{2}}u)_0 +C_1N(u)^2 +2\re(\Lambda^{s-2}f,\Lambda^{s-2}u)_0+\re( E\Lambda^s f,E\Lambda^su)_0\label{eq06061102}\\
     &\leq-C(\lambda(|x|) \Lambda^{s+\frac{m-1}{2}}u,\Lambda^{s+\frac{m-1}{2}}u)_0+C_1N(u)^2+C_2N(f)N(u). \label{eq0702}
\end{align} 
The last inequality implies that
\begin{align}
    2N(u)\partial_tN(u)
\leq C_1N(u)^2+C_2N(u)N(f),
\end{align}
which is equivalent to
\begin{align}
    \partial_{t}(e^{-C_1t}N(u))\leq e^{-C_1t} C_2N(f).
\end{align}
Therefore, integrating in time and taking the supremum in $[0,T]$ we get (i).

To prove (ii), we go back to \eqref{eq0702} and we rewrite it as
\begin{align}
    \partial_{t}(e^{-C_1t}N(u)^2)\leq e^{-C_1t} \Big(-C(\lambda(|x|) \Lambda^{s+\frac{m-1}{2}}u,\Lambda^{s+\frac{m-1}{2}}u)_0+C_2N(f)\Big).
\end{align}
Then, integrating in time and using once more the equivalence of Sobolev norms, we get
\begin{align}
\sup_{0\leq t \leq T}\|u\|^2_s+\int_0^T(\lambda(|x|) \Lambda^{s+\frac{m-1}{2}}u(t,\cdot),\Lambda^{s+\frac{m-1}{2}}u(t,\cdot))_0\,dt\lesssim e^{C_1T} \left(\|u(0,\cdot)\|_s^2+ \int_0^T \|f(t)\|_s^2 dt\right),
\end{align}
which proves part (ii) of the result.

To prove (iii) we use \eqref{eq06061102} and estimate $\re(E\Lambda^s f,E\Lambda^s u)$ as follows
\begin{align}
 \re(E\Lambda^s f,E\Lambda^s u)&=\re(\Lambda^{\frac{m-1}{2}} E \Lambda^{s-\frac{m-1}{2}}f,E\Lambda^su)_0+\re([E,\Lambda^{\frac{m-1}{2}}]\Lambda^{s-\frac{m-1}{2}}f,E\Lambda^su)_0\\
 &=\re(E \Lambda^{s-\frac{m-1}{2}}f,E\Lambda^{s+\frac{m-1}{2}}u)_0+\re(E\Lambda^{s-\frac{m-1}{2}}f,[\Lambda^{\frac{m-1}{2}},E]\Lambda^su)_0\\
 &\quad +\re([E,\Lambda^{\frac{m-1}{2}}]\Lambda^{s-\frac{m-1}{2}}f,E\Lambda^su)_0\\
 &=\re(\lambda(\abs{x})^{-\frac{1}{2}}E \Lambda^{s-\frac{m-1}{2}}f,\lambda(\abs{x})^{\frac{1}{2}}E\Lambda^{s+\frac{m-1}{2}}u)_0\\
 &\quad+\re(\lambda(\abs{x})^{-\frac{1}{2}}E\Lambda^{s-\frac{m-1}{2}}f,\lambda(\abs{x})^{\frac{1}{2}}[\Lambda^{\frac{m-1}{2}},E]\Lambda^su)_0 + \\
 & \quad +\re(\lambda(\abs{x})^{-\frac{1}{2}}[E,\Lambda^{\frac{m-1}{2}}]\Lambda^{s-\frac{m-1}{2}}f,\lambda(\abs{x})^{\frac{1}{2}}E\Lambda^su)_0 \\
 &\leq C_\delta \norm{\lambda(\abs{x})^{-\frac{1}{2}}E \Lambda^{s-\frac{m-1}{2}}f}^2_0+\delta \norm{\lambda(\abs{x})^{\frac{1}{2}} E\Lambda^{s+\frac{m-1}{2}}u}_0^2+O(\norm{u}_s^2).\label{eq06061118}
\end{align}
Using \eqref{eq06061118} into \eqref{eq06061102} and the equivalence of $N(\cdot)$ and $\|\cdot\|_s$, and choosing $\delta$ sufficiently small, for $C, C_1$ being new positive constants we obtain
\begin{align}
    \partial_{t}N(u)^2
    \leq&  -C(\lambda(|x|) \Lambda^{s+\frac{m-1}{2}}u,\Lambda^{s+\frac{m-1}{2}}u) +C_1N(u)^2\\
    &+C_2\norm{\lambda(|x|)^{-\frac 1 2}E\Lambda^{s-\frac{m-1}{2}} f}_0^2+2\re(\Lambda^{s-2} f,\Lambda^{s-2} u). \label{eqn19033}
\end{align}
Moreover, since $s+\frac{m-1}{2}-4<s$, we have
\begin{align}
  2\re(\Lambda^{s-2} f,\Lambda^{s-2} u)=&2\re(\lambda(\abs{x})^{-\frac 1 2}\Lambda^{s-\frac{m-1}{2}} f,\lambda(\abs{x})^{\frac 1 2}\Lambda^{s+\frac{m-1}{2}-4} u)\lesssim \|\lambda(\abs{x})^{-\frac{1}{2}}  \Lambda^{s-\frac{m-1}{2}} f\|_0^2+N(u)^2,
\end{align}
consequently, for $C_1,C_2$ new suitable positive constants, we get
\begin{align}
    \partial_{t}N(u)^2
    \leq&  -C(\lambda(|x|) \Lambda^{s+\frac{m-1}{2}}u,\Lambda^{s+\frac{m-1}{2}}u) +C_1N(u)^2+C_2\norm{\lambda(|x|)^{-\frac 1 2}E\Lambda^{s-\frac{m-1}{2}} f}_0^2.\label{eqn19034}
\end{align}
From \eqref{eqn19034}, integrating in time and using once again the equivalence of $N(\cdot)$ and $\|\cdot\|_s$,  we finally reach
\begin{align}
&\sup_{0\leq t \leq T}\|u\|^2_s+\int_0^T(\lambda(|x|) \Lambda^{s+\frac{m-1}{2}}u(t,\cdot),\Lambda^{s+\frac{m-1}{2}}u(t,\cdot))_0\,dt\\
&\quad\lesssim e^{C_1T} \left(\|u(0,\cdot)\|_s^2 +\int_0^T (\lambda(|x|)^{-1}\Lambda^{s-\frac{m-1}{2}}f(t,\cdot),\Lambda^{s-\frac{m-1}{2}}f(t,\cdot))_0dt\right),
\end{align}
which concludes the proof (iii).

As for the proof of (i'), (ii') and (iii'), we set $f=(\partial_t-iA)^*=-\partial_t+iA^*$ and repeat all the calculations above with the suitable sign adjustment and with $u(t,x)$ replaced by $u(T-t,x)$. Due to the flipped signs, we have to ensure that Lemma \ref{newDoilem} still applies, that is, that $-\re(\bar{a})$ satisfies condition \eqref{ellderivatives} and \eqref{smallcoeffa2a3} (recall that we are working with Weyl symbols, hence $-A^*=\mathrm{Op}^\w(-\bar{a})$). Since this is always the case due to the fact that $\re(a)$ satisfies such conditions, we can conclude the proof as in the cases (i), (ii), (iii).
\end{proof}

We now use the a priori estimates of Lemma \ref{lem.smoothing} to prove Theorem \ref{thm.LIVP}.

\proof[Proof of Theorem \ref{thm.LIVP}]
\textit{Step 1: Uniqueness}.
To prove the uniqueness it suffices to note that, if $u_1,u_2 \in C([0,T]; H^s(\mathbb{R}^n))$ are solutions to \eqref{LIVP}, then $v:=u_1-u_2\in C([0,T]; H^s(\mathbb{R}^n))\cap C^1([0,T];H^{s-m}(\R^n))$ solves \eqref{LIVP} with $f=0$ and $u_0=0$. Thus, by Lemma \ref{lem.smoothing}-(i), we have $v=0$. 

\textit{Step 2: Existence and smoothing estimates}. To prove (i), (ii), and (iii) we start with the analysis of the case  $u_0 \in \mathscr{S}(\mathbb{R}^n)$ and $f \in \mathscr{S}(\mathbb{R}^{n+1})$.
Under these hypotheses, we take $P:=\partial_t-iA$ and define the subspace $W\subseteq L^1([0,T];H^{-s}(\R^n))$ as
\[
W= \lbrace \psi \in L^1([0,T];H^{-s}(\R^n)); \; \exists \, \varphi \in C_c^\infty([0,T[ \times \R^n), \; \mathrm{s.t.} \; \psi=P^\ast \varphi \rbrace.
\]
We consider the linear functional $\ell: W \rightarrow \mathbb{C}$ given by 
\[
\ell(\psi):= \int_0^T (f(t,\cdot),\varphi(t,\cdot))_0 \, dt+ (u_0,\varphi(0, \cdot))_0,
\]
where $\varphi$ satisfies $\psi=P^\ast \varphi$. 
Note that $\varphi$ is uniquely determined due to point (i)' of Lemma \ref{lem.smoothing} and the fact that $\varphi_1(T,\cdot)=\varphi_2(T,\cdot)=0$. 
To see that $\ell$ is continuous, we use Lemma \ref{lem.smoothing}-(i)' on $\varphi$ with $s$ replaced by $-s$, which gives
\[
\begin{split}
\abs{\ell (\psi)}&\leq \norm{f}_{L^1([0,T]);H_x^s)}\sup_{t \in [0,T]}\norm{\varphi(t,\cdot)}_{H_x^{-s}}+\norm{u_0}_{H^s_x}\norm{\varphi(0,\cdot)}_{H_x^{-s}} \\
&\leq C_2e^{C_1T}\Bigl(\norm{f}_{L^1([0,T]);H_x^s)}+\norm{u_0}_{H^s_x}\Bigr)\|\psi\|_{L^1([0,T]);H_x^{-s})}.
\end{split}
\]
By Hahn-Banach's theorem we can extend the functional to $L^1([0,T];H^{-s}(\R^n))$. Then, by Riesz's theorem we have the existence of $u \in (L^1([0,T];H^{-s}(\R^n)))^\ast=L^\infty([0,T]; H^s(\R^n))$ such that, for $\psi=P^\ast \varphi$,
\begin{equation}\label{eqfunctell}
\ell(\psi)=(u,P^\ast \varphi)_0=\int_0^T (f(t,\cdot),\varphi(t,\cdot))_0 \, dt+ (u_0,\varphi(0, \cdot))_0,
\end{equation}
and thus $Pu=f$ in $\mathscr{D'}(]0,T[ \times \mathbb{R}^n)$. Moreover, since $Pu=\partial_tu-iAu=f$ and $f \in \mathscr{S}(\R^{n+1})$, we have that $\partial_t u \in L^\infty([0,T]; H^{s-m}(\R^n))$ and so $u \in C([0,T];H^{s-m}(\R^n))$. Using the equation once more, we also get $u \in C^1([0,T]; H^{s-2m}(\R^n))$. Since by \eqref{eqfunctell} 
\[
\int_0^T (u(t,\cdot),(\partial_t-iA)^*\varphi(t,\cdot))_0 \, dt =\int_0^T (f(t,\cdot),\varphi(t,\cdot))_0 \, dt+ (u_0,\varphi(0, \cdot))_0,
\]
we have $u(0,\cdot)=u_0$. Finally,  using the previous argument with $s+2m$ in place of $s$, we obtain a solution of the $IVP$ $\eqref{LIVP}$ for which the estimates in Lemma \ref{lem.smoothing} hold under the hypotheses $u_0 \in \mathscr{S}(\R^n)$ and $f \in \mathscr{S}(\R^{1+n})$. 

Now, we use the result in the case $u_0 \in \mathscr{S}(\R^n)$ and $f \in \mathscr{S}(\R^{1+n})$ to prove the cases (i), (ii), and (iii).

\textit{Proof of (i).} Let $u_0 \in H^s(\R^n)$ and $f \in L^1([0,T];H^s(\R^n))$.  In this case, by density, there exist two sequences $(f_j)_{j \in \mathbb{N}} \subseteq \mathscr{S}(\R^{n+1})$ and $(v_j)_{j \in \mathbb{N}} \subseteq \mathscr{S}(\R^n)$ such that $f_j \rightarrow f$ and $v_j \rightarrow u_0$ as $j \rightarrow +\infty$. Therefore, due to what we have proved above in the case when the initial datum and the source terms are in $\mathscr{S}(\R^n)$, for all $j \in \mathbb{N}$ there exists a solution $u_j$ of $\eqref{LIVP}$ with initial datum $v_j$ and source term $f_j$ which satisfies Lemma \ref{lem.smoothing}-(i). Since $(u_j)_{j \in \mathbb{N}}$ is a Cauchy sequence, then $u:=\lim_{j \rightarrow +\infty} u_j$ solves \eqref{LIVP} with initial datum $u_0$ and source term $f$ and satisfies estimate (i) of Lemma \ref{lem.smoothing}. This proves point (i) of the theorem.

\textit{Proof of (ii)}. Let $u_0 \in H^s(\R^n)$ and $f \in L^2([0,T];H^s(\R^n))$. To prove this part we proceed as in the proof of (i), but considering $(f_j)_j \in \mathscr{S}(\R^{n+1})$ such that $f_j\rightarrow f$ in $L^2([0,T];H^s(\R^n))$ as $j \rightarrow +\infty$, and using (ii) of Lemma \ref{lem.smoothing} instead of (i). 

\textit{Proof of (iii)} Let $u_0 \in H^s(\R^n)$ and $\Lambda^{s-\frac{m-1}{2}}f\in L^2([0,T]\times \R^n;\lambda(\abs{x})^{-1} dx dt)$. By the boundedness theorem for pseudo-differential operators, it is possible to prove that there exists a sequence $(g_j)_{j \in \mathbb{N}}\in\mathscr{S}(\R^{1+n})$ such that $g_j \rightarrow \Lambda^{s-\frac{m-1}{2}}f$ in $L^2([0,T]\times \R^n;\lambda(\abs{x})^{-1} dx dt)$ as $j \rightarrow + \infty$. Then,  applying the same argument used in the proof of (i) with $f_j$ replaced by $\Lambda^{-s+\frac{m-1}{2}}g_j$, and using part (iii) of Lemma $\ref{lem.smoothing}$, we finally get the proof. 
\endproof

\begin{remark}\label{rmk.conf.thm.LIN}
Note that condition \eqref{ellderivatives} can be replaced by the possibly weaker condition $\re(a)$ admissible. 
\end{remark}

\begin{remark}[Smoothing estimates for operators of order  $m=2$]\label{rmk.m=2}
We wish to emphasize once more that our smoothing estimates hold for pseudo-differential operators $D_t-A$ with $A$ satisfying the hypotheses of Theorem \ref{thm.LIVP}, hence not necessarily differential. Moreover, when $m=2$ and $A$ satisfies such conditions, then the IVP \eqref{LIVP} includes the one for ultrahyperbolic variable coefficient Schr\"odinger operators treated in \cite{KPRV05}, and that for variable coefficient Schr\"odinger operators considered in \cite{D96},\cite{KPRV05} and \cite{CKS}. Therefore, our result includes those already known in the literature when $m=2$.
\end{remark}

\begin{remark}[Smoothing estimates for operators of order $m>3$]\label{rmk.Gard-FP}
    Given a pseudo-differential operator $A$ of order $m>3$ satisfying the hypotheses in Theorem \ref{thm.LIVP}, one hopes to prove the same smoothing effect for the solution of the corresponding IVP \eqref{LIVP}. However, when $m>3$ one faces some obstructions due to the absence of a priori estimates stronger than the Fefferman-Phong inequality, i.e. estimates that return $L^2$-error terms when applied to operators of order $m>3$. In turn, when $m>3$, the previous strategy gives smoothing estimates with a $H^{\frac{m-3}{2}}$- error, which, unfortunately, we cannot eliminate with our technique.  
\end{remark}

\section{Well-posedness for the NLIVP for KdV-type operators with variable coefficients}\label{sec.NLIVP}
In this section we focus on the well-posedness of the NLIVP 
\begin{equation}\label{NLIVP3}
\begin{cases}
\partial_t u -iAu-N(u,\bar{u},  D^\alpha u)=0 \quad \quad \mathrm{on} \quad \R^n \times ]0,T[, \quad T>0, \\
u(0,\cdot)=u_0 \quad \quad \mathrm{on} \quad \R^n,
\end{cases}
\end{equation}
where $u_0 \in \mathscr{S}(\R^n)$, $N=N(u,\bar{u}, D^\alpha u)$ is a polynomial with no linear or constant terms, and $A=Op^\w(a)$ is a pseudo-differential operator of order $3$ whose Weyl symbol $a\in S^3(\R^n)$ satisfies \eqref{cond.a} and \eqref{cond.ima}, and such that $\re(a)$ satisfies \eqref{ellderivatives} and \eqref{smallcoeffa2a3}.
  Moreover, we will consider nonlinear terms of the form
\begin{equation}\label{Npqalpha}
N(u,\bar{u},D^\alpha u)=u^p\bar{u}^q D^\alpha u,
\end{equation}
with $p,q\in \mathbb{N}_0$ such that $(p,q)\neq (0,0)$, and $\alpha \in \mathbb{N}_0^n$ such that $\abs{\alpha}\leq 2$. 
\begin{remark}
Even if we focus on $N(u,\bar{u}, D^\alpha u)$ as in \eqref{Npqalpha}, it is possible to consider nonlinearities defined as a finite linear combination of terms of the form \eqref{Npqalpha}. We chose not to focus on this more general case, as it does not introduce any additional difficulties in the proof; rather, it would merely require repeating the arguments used for a single term of the form \eqref{Npqalpha}. 
\end{remark}
\begin{remark}
Here we devote our analysis to the case $m=3$ instead of $m=2,3$.
However, by repeating the same classical techniques employed below for the case $m=3$, all the results continue to hold for operators of order two. Since there is no difference in the proofs when $m=2$ and $m=3$, and since the case $m=2$ has been extensively treated in \cite{KPRV05}, we focus on the open problem $m=3$.
\end{remark}

\begin{remark}
Since we work with operators of order three, in view of the \textit{inhomogeneous smoothing effect} it is possible to consider nonlinearities that involve derivatives of order two. This is due to the fact that
by Theorem \ref{thm.LIVP}  the solution of dispersive equations of order $m=3$, with possibly variable coefficients, gains  $m-1=2$ derivatives (in the suitable sense) with respect to the inhomogeneous term of the equation. This fact is well-known in the third-order constant coefficients case (see \cite{CS89,KPV93}) and when the third-order principal part has constant coefficients (see \cite{KS}), but it is new when the variable coefficients appear in the principal part. 
Note finally that nonlinearities of the form \eqref{Npqalpha} include the classical ones $N(u)=\pm u \abs{u}^{2k}$ and  $N(u,\bar{u},\nabla u, \nabla \bar{u})=\pm \nabla u \cdot  u^{2k}$, with $k\geq 1$, both when $m=2$ and $m=3$. These nonlinearities were treated in \cite{FS21} for Schr\"odinger equations with time-dependent coefficients. Even if we do not examine the space-time dependent coefficients case in this work, it can be approached by combining our techniques with those in \cite{FS21}. 
\end{remark}

\begin{remark}
    We remark that our general nonlinearity allows us to include in our class of problems  the \textit{k-generalized KdV equation} on $[0,T]\times \R$, that is
  \begin{align}   
 &\left\{ \begin{array}{ll}
        \partial_t u +\partial_x^3u+u^k\partial_x u=0, & \mathrm{on} \quad [0,T]\times \R \,\,\,\,(k\in \mathbb{Z}_+) \quad (\textit{k-generalized KdV}) \\
        u(0,\cdot)=u_0 &\mathrm{on} \quad \R,
    \end{array}\right.
    \end{align}
    where the admissible operator $A$ here is $A=D_{x}^3$, and the \textit{$k$-generalized ZK equation}, that is
 \begin{align}   
 &\left\{ \begin{array}{ll}
        \partial_t u +\partial_{x_1}\Delta_{x}u+u^k\partial_{x_1} u=0, & \mathrm{on} \quad [0,T]\times \R^2 \,\,\,\,(k\in \mathbb{Z}_+) \quad (\textit{k-generalized ZK})\\
        u(0,\cdot)=u_0 &\mathrm{on} \quad \R^2,
    \end{array}\right.
    \end{align}
 where the admissible operator $A$ here is $A=D_{x_1}\Delta_x$. When $k=1$ the equations above give the KdV and the ZK equation, respectively.
\end{remark}

In order prove the well-posedness of the NLIVP \eqref{NLIVP3}, we use the standard contraction argument.

\begin{proof}[Proof of Theorem \ref{thm.NLIVP}]
The first step is to reduce our problem
\[
\begin{cases}
\partial_t u -iAu-N(u,\bar{u},D^\alpha u)=0 \quad \quad \mathrm{on} \quad \R^n \times ]0,T[, \quad T>0, \\
u(0,\cdot)=u_0 \quad \quad \mathrm{on} \quad \R^n,
\end{cases}
\]
to an equivalent one. 
To do that, we rewrite $N$ as
\begin{align}
N=N(u,\bar{u},D^\alpha u)&=u^p\bar{u}^qD^\alpha u \\
&=(u^p\bar{u}^q-u_0^p\bar{u}_0^q)D^\alpha u + u_0^p\bar{u}_0^qD^\alpha u  \\
&=:\widetilde{N}(u,\bar{u},D^\alpha u)+u_0^p\bar{u}_0^qD^\alpha u,\label{eq.pf.1}
\end{align}
incorporate the last term on the RHS of \eqref{eq.pf.1} to the operator $A$, and call the resulting operator $\tilde{A}$. With the above notations, our NLIVP becomes equivalent to 
\begin{equation} \label{IVP3.1}
    \left\{
\begin{array}{ll}
 \partial_t u -i\tilde{A}u-\widetilde{N}(u,\bar{u},D^\alpha u)=0    &   \mathrm{on} \quad \R^n \times ]0,T[, \quad T>0,\\
 u(0,\cdot)=u_0    & \mathrm{on} \quad \R^n,
\end{array}
    \right.
\end{equation}
where $\tilde{A}$, due to the property $u_0 \in \mathscr{S}(\mathbb{R}^n)$, is still an operator of order three with an admissible real part and for which the real and imaginary parts of the symbol satisfy the smallness assumptions \eqref{smallcoeffa2a3} and \eqref{cond.ima}, respectively.
Therefore, by Theorem \ref{thm.LIVP} there exists a unique solution of the associated linear initial value problem (see also Remark \ref{rmk.conf.thm.LIN}).

Next, we want to solve the integral equation \begin{equation}
\label{eqintNLIVP2}
u=W(t)\, u_0+\int_0^t W(t-t')\tilde{N}(u,\bar{u},D^\alpha u) \, dt',
\end{equation}
where here $W(t)u_0$ is the solution at time $t$ of the linear homogeneous equation
    \begin{equation}\label{homLIVP} 
    \left\{
\begin{array}{ll}
 \partial_t u -i\tilde{A}u=0 \quad \quad & \mathrm{on} \quad \R^n \times ]0,T[, \quad T>0,\\
 u(0,\cdot)=u_0,  & \mathrm{on} \quad \R^n,
\end{array}
    \right.
\end{equation}
Of course $u$ solves \eqref{eqintNLIVP2} if and only if it solves \eqref{NLIVP3}.
To solve \eqref{eqintNLIVP2} and show that the solution has the desired properties, for $s \in \mathbb{R}$, $s\geq n+4N+5$,  $\lambda(\abs{x})=\langle x \rangle^{-N}$, $N \in \mathbb{N}$, $N>1$, and $T>0$ to be chosen, we define the Banach space  
\[
\begin{split}
X_T^s:=\Bigl \lbrace u:[0,T] \times \R^n \rightarrow \mathbb{C}; \ & \st\norm{u}_{H^s_x} < +\infty, \;  \Bigl(\int_0^T \int_{\R^n}\lambda(\abs{x})\abs{\Lambda^{s+1}u}^2 \, dx \, dt\Bigr)^{1/2} < +\infty, \\
& \st\norm{\lambda (\abs{x})^{-1}u}_{ H^{s-2N-5}_x}  < +\infty, \,  \st\norm{ \lambda (\abs{x})^{-1} \partial_t u}_{ H_x^{s-2N-5}} <+\infty \Bigr \rbrace,
\end{split}
\]
with norm 
\[
\norm{u}^2_{X_{T}^s}:=\st\norm{u}_{ H^s_x}^2+\int_0^T \int_{\R^n}\lambda(\abs{x})\abs{\Lambda^{s+1}u}^2 \, dx \, dt+\st\norm{\lambda (\abs{x})^{-1}u}_{ H^{s-2N-5}_x}^2+\st\norm{ \lambda (\abs{x})^{-1}\partial_t u}_{H_x^{s-2N-5}}.
\]
Also, we define the map $\Phi_{u_0}:X^s_T\longrightarrow L^\infty_tH^s$  as 
\[
\Phi_{u_0}(u)(t):=W(t)\, u_0+\int_0^t W(t-t')\tilde{N}(u,\bar{u},D^\alpha u) \, dt',
\]
and call $B_R\subset X_T^s$  the ball of radius $R$ centered at $0$ in $ X_T^s$. To prove the result, it suffices to use the fixed point theorem on the map $\Phi_{u_0}$ restricted to $B_R$, where $R$ will be fixed later. 
To apply the contraction argument, the first step is to prove that the map $\Phi_{u_0}$ sends $B_R$ into itself, that is
that $\norm{\Phi_{u_0}(u)}_{X^s_T}\leq R$ for all $u\in B_R$, while the second step will be to prove that the map is a contraction on $B_R$. This will allow us to conclude the first part of the theorem, while the continuity property in the second part of the theorem will be proved separately.
Since $v:=\Phi_{u_0}(u)$ solves the linear problem
\begin{equation} \label{IVP3.2}
    \left\{
\begin{array}{ll}
 \partial_t v -i\tilde{A}v=\widetilde{N}(u,\bar{u},D^\alpha u) \quad \quad & \mathrm{on} \quad \R^n \times ]0,T[, \quad T>0, \\
 v(0,\cdot)=u_0 & \mathrm{on} \quad \mathbb{R}^n,  
\end{array}
    \right.
\end{equation}
then to estimate $\norm{v}_{X^s_T}:=\norm{\Phi_{u_0}(u)}_{X^s_T}$ we can use the linear smoothing estimates in Theorem \ref{thm.LIVP}. 

We first prove that the map $\Phi_{u_0}$ sends $B_R$ into itself. Since
\[
\begin{split}
\norm{v}^2_{X_{T}^s}&:= \st\norm{v}_{H^s_x}^2+\int_0^T \int_{\R^n}\lambda(\abs{x})\abs{\Lambda^{s+1}v}^2 \, dx \, dt+\st\norm{\lambda (\abs{x})^{-1}v}_{ H^{s-2N-5}_x}^2+ \st\norm{\lambda(\abs{x})^{-1} \partial_t v}_{H_x^{s-2N-5}}^2 \\
&=(I)+(II)+(III)+(IV),
\end{split}
\]
we estimate  terms (I)-(IV) separately.

For $(I)+(II)$, by the smoothing effect in Theorem \ref{thm.LIVP}, we have
\[
(I)+(II) \lesssim \norm{u_0}_{ H^s_x}^2+\int_0^T \int_{\R^n}\lambda(\abs{x})^{-1}\abs{\Lambda^{s-1}\tilde{N}(u,\bar{u},D^\alpha u)}^2 \, dx \, dt.
\]
Since $s-1 \in 2\mathbb{N}$, then $\Lambda^{s-1}$ is a differential operator of order $s-1$ and by Leibniz rule
we have 
\[
\begin{split}
\Lambda^{s-1}\tilde{N}(u,\bar{u},D^\alpha u)&=\Lambda^{s-1}(u^p\bar{u}^q-u_0^p\bar{u}_0^q) (D^\alpha u) \\
&=(\Lambda^{s-1} D^\alpha u) (u^p\bar{u}^q -u_0^p\bar{u}_0^q) \\
& \quad + \sum_{\substack{\abs{\gamma_1}+\abs{\gamma_2}\leq s-1 \\ s/2-1/2 \leq \abs{\gamma_1} < s-1, \; \abs{\gamma_2}\leq s/2-1/2}}C_{\gamma_1,\gamma_2,s}(D^{\gamma_1} D^\alpha u)D^{\gamma_2}(u^p\bar{u}^q-u_0^p\bar{u}_0^q) \\ 
& \quad +\sum_{\substack{\abs{\gamma_1}+\abs{\gamma_2}\leq s-1 \\ \abs{\gamma_1} < s/2-1/2, \; \abs{\gamma_2}> s/2-1/2}}C_{\gamma_1,\gamma_2,s}(D^{\gamma_1} D^\alpha u)D^{\gamma_2}(u^p\bar{u}^q-u_0^p\bar{u}_0^q),
\end{split}
\]
which gives
\[
\begin{split}
&(II) \lesssim  \int_0^T \int_{\R^n}\lambda(\abs{x})^{-1}\abs{(\Lambda^{s-1} D^\alpha u) (u^p\bar{u}^q-u_0^p\bar{u}_0^q)}^2 \, dx \, dt \\
& + \sum_{\substack{\abs{\gamma_1}+\abs{\gamma_2}\leq s-1 \\ s/2-1/2 \leq \abs{\gamma_1} < s-1, \; \abs{\gamma_2}\leq s/2-1/2}}C_{\gamma_1,\gamma_2,s}\int_0^T \int_{\R^n}\lambda(\abs{x})^{-1}\abs{(D^{\gamma_1} D^\alpha u)(D^{\gamma_2}(u^p\bar{u}^q-u_0^p\bar{u}_0^q))}^2 \, dx \, dt \\
& + \sum_{\substack{\abs{\gamma_1}+\abs{\gamma_2}\leq s-1 \\ \abs{\gamma_1} < s/2-1/2, \; \abs{\gamma_2}> s/2-1/2}}C_{\gamma_1,\gamma_2,s}\int_0^T \int_{\R^n}\lambda(\abs{x})^{-1}\abs{(D^{\gamma_1} D^\alpha u)(D^{\gamma_2}(u^p\bar{u}^q-u_0^p\bar{u}_0^q))}^2 \, dx \, dt \\
&:=(II)_a+(II)_b+(II)_c.
\end{split}
\]
For $(II)_a$ we have 
\[
\begin{split}
(II)_a&=\int_0^T \int_{\R^n}\lambda(\abs{x}) \abs{\Lambda^{s-1}D^\alpha u}^2 \cdot \abs{\lambda(\abs{x})^{-1}(u^p\bar{u}^q-u_0^p\bar{u}_0^q)}^2 \, dx \, dt \\
&\lesssim T^2 \Bigr(\int_0^T \int_{\R^n} \lambda(\abs{x})\abs{\Lambda^{s-1}D^\alpha u}^2 dx \, dt \Bigl) \norm{\lambda(\abs{x})^{-1}\partial_tu^p\bar{u}^q }^2_{L_t^\infty L_x^\infty} \\
& \lesssim T^2 \Bigr(\int_0^T \int_{\R^n} \lambda(\abs{x})\abs{\Lambda^{s-1}D^\alpha u}^2 dx \, dt \Bigl) \norm{\lambda(\abs{x})^{-1}\partial_tu^p\bar{u}^q }^2_{L_t^\infty H_x^{n/2+\varepsilon}},
\end{split}
\]
which gives, by the algebraic properties of $H_x^{n/2+\varepsilon}(\mathbb{R}^n)$, 
\[
(II)_a\lesssim T^2 \Biggl(\int_0^T \int_{\R^n} \lambda(\abs{x})\abs{\Lambda^{s-1}D^\alpha u}^2 dx \, dt \Biggr) \st\norm{\lambda(\abs{x})^{-1}\partial_t u}^2_{H_x^{n/2+\varepsilon}} \st\norm{u}^{2(p+q-1)}_{H_x^{n/2+\varepsilon}}.
\]
Since $s-2N-5>n/2+\varepsilon+1$ for $\varepsilon>0$ small enough, we can conclude
\[
(II)_a\lesssim T^2\norm{u}_{X_T^s}^{2(p+q+1)}.
\]
For $(II)_b$ we have 
\begin{align}
(II)_b&=\sum_{\substack{\abs{\gamma_1}+\abs{\gamma_2}\leq s-1 \\ s/2-1/2 \leq \abs{\gamma_1} < s-1, \; \abs{\gamma_2}\leq s/2-1/2}}C_{\gamma_1,\gamma_2,s}\int_0^T\int_{\R^n}\lambda(\abs{x})^{-1}\abs{(D^{\gamma_1} D^\alpha u)(D^{\gamma_2}(u^p\bar{u}^q-u_0^p\bar{u}_0^q))}^2 \, dx \, dt \\
&\lesssim T^2\; \sum_{\substack{\abs{\gamma_1}+\abs{\gamma_2}\leq s-1 \\ s/2-1/2 \leq \abs{\gamma_1} < s-1, \; \abs{\gamma_2}\leq s/2-1/2}}\int_0^T \int_{\R^n}\abs{D^{\gamma_1}D^\alpha u}^2 \, dx \, dt \cdot \norm{\lambda(\abs{x})^{-1/2}D^{\gamma_2}\partial_t u^p\bar{u}^q}^2_{L^\infty_t L^\infty_x} \\
& \lesssim T^2 \; \sum_{\substack{\abs{\gamma_1}+\abs{\gamma_2}\leq s-1 \\ s/2-1/2 \leq \abs{\gamma_1} < s-1, \; \abs{\gamma_2}\leq s/2-1/2}}\int_0^T \int_{\R^n}\abs{D^{\gamma_1}D^\alpha u}^2 \, dx \, dt \cdot \norm{\lambda(\abs{x})^{-1}D^{\gamma_2}\partial_t u^p\bar{u}^q}^2_{L^\infty_t H_x^{n/2+\varepsilon}} \\
&\lesssim \; T^3 \sum_{\substack{\abs{\gamma_1}+\abs{\gamma_2}\leq s-1 \\ s/2-1/2 \leq \abs{\gamma_1} < s-1, \; \abs{\gamma_2}\leq s/2-1/2}} \norm{u}_{L^\infty_t H_x^{|\gamma_1|+2}}^2 \cdot \norm{\lambda(\abs{x})^{-1}D^{\gamma_2}\partial_tu^p\bar{u}^q}^2_{L^\infty_t H_x^{n/2+\varepsilon}}.\label{1411}
\end{align}
since, recall, $\abs{\alpha}\leq 2$.  To estimate the norm in \eqref{1411} we use Lemma \ref{lemmatec1} on $\tilde{u}:=\partial_t u^p\bar{u}^q$, so that, for $|\gamma_2|\leq s/2-1/2$, we have
\begin{align}
\norm{\lambda(\abs{x})^{-1}D^{\gamma_2}\tilde{u}}^2_{L^\infty_t H_x^{n/2+\varepsilon}} 
& \lesssim \norm{D^{\gamma_2}\lambda(\abs{x})^{-1}\tilde{u}}^2_{L^\infty_t H_x^{n/2+\varepsilon}}+ \sum_{j=1}^n \norm{Op(p_{\abs{\gamma_2}-1})x_j \langle x \rangle ^{2N-2}\tilde{u}}^2_{L^\infty_tH_x^{n/2+\varepsilon}} \\
    & + \sum_{\substack{\abs{\alpha+\beta} \leq N, \\ \abs{\alpha} \geq 2, \abs{\beta}\leq 2N-2}}\norm{Op(p_{\abs{\gamma_2}-\abs{\alpha}})x^\beta \tilde{u}}^2_{L^\infty_tH_x^{n/2+\varepsilon}} \\
    & \lesssim \norm{\lambda(\abs{x})^{-1}\tilde{u}}^2_{L^\infty_t H_x^{n/2+\varepsilon+\abs{\gamma_2}}}+ \sum_{j=1}^n \norm{x_j \langle x \rangle ^{2N-2}\tilde{u}}^2_{L^\infty_tH_x^{n/2+\varepsilon+\abs{\gamma_2}-1}} \\
    & + \sum_{\substack{\abs{\alpha+\beta} \leq N, \\ \abs{\alpha} \geq 2, \abs{\beta}\leq 2N-2}}\norm{x^\beta \tilde{u}}^2_{L^\infty_tH_x^{n/2+\varepsilon+\abs{\gamma_2}-\abs{\alpha}}} \\
    & \lesssim \norm{\lambda(\abs{x})^{-1}\tilde{u}}^2_{L^\infty_t H_x^{n/2+\varepsilon+\abs{\gamma_2}}}+ \sum_{j=1}^n \norm{\frac{x_j}{\langle x \rangle^2} \langle x \rangle^{2N}\tilde{u}}^2_{L^\infty_tH_x^{n/2+\varepsilon+\abs{\gamma_2}-1}} \\
    & + \sum_{\substack{\abs{\alpha+\beta} \leq N, \\ \abs{\alpha} \geq 2, \abs{\beta}\leq 2N-2}}\norm{\frac{x^\beta}{\langle x \rangle^{2N}}\langle x \rangle^{2N} \tilde{u}}^2_{L^\infty_tH_x^{n/2+\varepsilon+\abs{\gamma_2}-\abs{\alpha}}} \\
    &\lesssim \st\norm{\lambda(\abs{x})^{-1}\tilde{u}}^2_{H_x^{n/2+\varepsilon+\abs{\gamma_2}}}=\st\norm{\lambda(\abs{x})^{-1}\partial_t u^p\bar{u}^q}^2_{H_x^{n/2+\varepsilon+\abs{\gamma_2}}}
\end{align}
where $p_r$,  for $r \in \mathbb{R}$, stands for a symbol of order $r$. 
Since $n/2+s/2-1/2+\varepsilon\leq s-2N-5$ for $\varepsilon$ small enough, we can conclude the estimate
\[
(II)_b \lesssim T^3 \norm{u}_{X^s_T}^{2(p+q+1)}.
\]
Finally, by Sobolev's theorem and Lemma \ref{lemmatec1}, for $(II)_c$ we get
\[
\begin{split}
(II)_c&=\sum_{\substack{\abs{\gamma_1}+\abs{\gamma_2}\leq s-1 \\ \abs{\gamma_1} < s/2-1/2, \; \abs{\gamma_2}> s/2-1/2}}C_{\gamma_1,\gamma_2,s}\int_0^T \int_{\R^n}\lambda(\abs{x})^{-1}\abs{(D^{\gamma_1}D^\alpha u)(D^{\gamma_2}(u^p\bar{u}^q-u_0^p\bar{u}_0^q))}^2 \, dx \, dt \Bigr) \\
&\lesssim T\sum_{\substack{\abs{\gamma_1}+\abs{\gamma_2}\leq s-1 \\ \abs{\gamma_1} < s/2-1/2, \; \abs{\gamma_2}> s/2-1/2}}\|\lambda(\abs{x})^{-1}D^{\gamma_1}D^\alpha u\|_{L^\infty_tL^\infty_x}^2\norm{D^{\gamma_2}(u^p\bar{u}^q-u_0^p\bar{u}_0^q)}_{L^\infty_tL^2_x}^2\\
&\lesssim T \|\lambda(\abs{x})^{-1}u\|_{L^\infty_t H_x^{n/2+s/2-3/2+\varepsilon+2}}^2(\| u\|^{2(p+q)}_{L^\infty_tH_x^{s-1}}+\| u_0\|^{2(p+q)}_{H_x^{s-1}} )\\
&\lesssim T \|u\|_{X_T^s}^2(\| u\|^{2(p+q)}_{X_T^s}+\| u_0\|^{2(p+q)}_{H_x^{s}} )\\
&\lesssim T (1+\| u_0\|^{2(p+q)}_{H_x^{s}})(\| u\|^{2(p+q)}_{X_T^s} +\|u\|_{X_T^s}^2).
\end{split}
\]
Putting together all the estimates, for $T\leq 1$ and some $C>0$ we have
\[
(I)+(II) \leq C \left(\norm{u_0}^2_{H^s_x}+T (1+\| u_0\|^{2(p+q)}_{H_x^{s}})(\| u\|^{2(p+q)}_{X_T^s} +\|u\|_{X_T^s}^2)\right).
\]

Now, we proceed with the estimate of
\[
(III)+(IV)=\st\norm{\lambda (\abs{x})^{-1}v}_{ H^{s-2N-5}_x}^2+ \st\norm{\lambda(\abs{x})^{-1} \partial_t v}^2_{H_x^{s-2N-5}}.
\]
The idea is to use Lemma \ref{lemmatec2} as follows. Let us start  with $(III)$, for which we have
\[
\begin{split}
(III)&=\st\norm{\lambda (\abs{x})^{-1}v}_{ H^{s-2N-5}_x}^2 \leq \st\norm{\lambda (\abs{x})^{-1}v}_{ H^{s-2N-2}_x}^2:=(III)_2 \\
& \lesssim \st\norm{\lambda (\abs{x})^{-1}W(t)u_0}_{H^{s-2N-2}_x}^2+\st\norm{\int_0^t\lambda (\abs{x})^{-1}W(t-t') 
\tilde{N}(u,\bar{u},D^\alpha u) dt'}_{H^{s-2N-2}_x}^2. \\
\end{split}
\]
 Note that we have reduced the estimate for $(III)$ to an estimate for $(III)_2:=\|\lambda(\abs{x})^{-1}v\|_{L^\infty_tH^{s-2N-2}_x}$. This will be used in the estimate of $(IV)$ below.

Next, for $T\leq 1$, $s\geq n+4N+5$, and $\varepsilon<1$, using Lemma \ref{lemmatec1}, Lemma \ref{lemmatec2}, and Lemma 6.0.1. in \cite{FS21}, we obtain
\begin{align}
(III)_2
&\lesssim (1+T^{2N})^2\norm{\lambda(\abs{x})^{-1}u_0}^2_{H_x^{s-2}}+\Bigl(T\sup_{0\leq t'\leq t\leq  T}\norm{\lambda (\abs{x})^{-1}W(t-t')(u^p\bar{u}^q-u_0^p\bar{u}_0^q)D^\alpha u}_{H^{s-2N-2}_x}\Bigr)^2 \\
&\lesssim (1+T^{2N})^2\norm{\lambda(\abs{x})^{-1}u_0}^2_{H_x^{s-2}}+\Bigl(T(1+T^{2N})\st\norm{\lambda(\abs{x})^{-1}(u^p\bar{u}^q-u_0^p\bar{u}_0^q)D^\alpha u}_{H_x^{s-2}}\Bigr)^2\\
&\lesssim
(1+T^{2N})^2\norm{\lambda(\abs{x})^{-1}u_0}_{H_x^{s-2}}^2+
T^2(1+T^{2N})^2  \Bigl( \st\norm{ \lambda(\abs{x})^{-1}D^\alpha u}_{H_x^{n/2+\varepsilon}}\st\norm{u^p\bar{u}^q-u_0^p\bar{u}_0^q}_{H_x^{s-2}}\\
&\quad +  \st\norm{D^\alpha u}_{H_x^{s-2}}\st\norm{\lambda(\abs{x})^{-1}(u^p\bar{u}^q-u_0^p\bar{u}_0^q)}_{H_x^{n/2+\varepsilon}}\Bigr)^2\\
&\lesssim
(1+T^{2N})^2\norm{\lambda(\abs{x})^{-1}u_0}_{H_x^{s-2}}^2+
T^2(1+T^{2N})^2 \Bigl(  \st\|\lambda(\abs{x})^{-1}u\|_{ H_x^{n/2+2+\varepsilon}}^2\Big(\st\|u\|^{2(p+q)}_{ H_x^{s-2}}+\|u_0\|^{2(p+q)}_{ H_x^{s-2}}\Big)\\
&\quad + \st\|u\|_{H_x^s}^2 \Big( \st\| \lambda(\abs{x})^{-1} u \|^2_{H_x^{n/2+\varepsilon}}\st\|u\|_{ H^{n/2+\varepsilon}_x}^{2(p+q-1)}+ \| \lambda(\abs{x})^{-1} u_0\|^2_{H_x^{n/2+\varepsilon}}\|u_0\|_{ H^{n/2+\varepsilon}_x}^{2(p+q-1)}
\Bigr)\\
&\lesssim \norm{\lambda(\abs{x})^{-1}u_0}^2_{H_x^{s-2}}+T \Big(1+ \norm{u_0}^{2(p+q)}_{H_x^{s-2}}+ \| \lambda(\abs{x})^{-1} u_0\|^{2}_{H_x^{n/2+\varepsilon}}\|u_0\|_{ H^{n/2+\varepsilon}_x}^{2(p+q-1)}\Big)
\Bigg(\|u\|_{X^s_T}^{2(p+q+1)}+\|u\|_{X^s_T}^{2}\Bigg).
\end{align}

As for term $(IV)$, it can be estimated by using the estimate for term $(III)_2$ as follows.
Since $s\geq n+4N+5$, by Lemma \ref{lemmatec1} we have
\begin{align}
(IV)&=\st\norm{\lambda(\abs{x})^{-1} \partial_t v}^2_{H_x^{s-2N-5}}\\
    &\lesssim \st\|\lambda(\abs{x})^{-1}\tilde{A}v\|_{H_x^{s-2N-5}}^2+\st\|\lambda(\abs{x})^{-1}\tilde{N}(u,\bar{u},D^\alpha u)\|_{H_x^{s-2N-5}}^2\\
&\lesssim\st\norm{\lambda(\abs{x})^{-1}v}_{H_x^{s-2N-2}}^2+\st\|\lambda(\abs{x})^{-1}(D^\alpha u)(u^p\bar{u}^q-u_0^p\bar{u}_0^q)\|_{H_x^{s-2N-5}}^2\\
&\lesssim (III)_2+\st\|\lambda(\abs{x})^{-1}(D^\alpha u)(\int_0^t\partial_t u^p\bar{u}^qdt')\|_{H_x^{s-2N-5}}^2.
\end{align}
Using the algebra property of $H_x^{s-2N-5}(\R^n)$ ($s-2N-5>n/2$),  along with Lemma \ref{lemmatec1} and Minkowski's inequality, we get
\begin{align}
(IV) &\lesssim (III)_2 + \st\norm{D^\alpha u}_{H_x^{s-2N-5}}^2 \st\norm{\lambda(\abs{x})^{-1}(\int_0^t\partial_t u^p\bar{u}^qdt')}^2_{H_x^{s-2N-5}} \\
&\lesssim  (III)_2 +T^2\st\norm{u}_{H_x^{s-2N-3}}^2 \st\norm{\lambda(\abs{x})^{-1}\partial_t u^p\bar{u}^q}^2_{H_x^{s-2N-5}}. \\
&\lesssim (III)_2+T^2\st\norm{u}^{2(p+q)}_{H_x^{s-2N-3}}\st\norm{\lambda(\abs{x})^{-1}\partial_t u}^2_{H_x^{s-2N-5}} \\
&\lesssim (III)_2+T^2\norm{u}^{2(p+q+1)}_{X_T^s}.
\end{align}

Therefore, collecting all the estimates, for $T\leq 1$, we obtain
\begin{align}
    \|\Phi_{u_0}(u)\|_{X^s_T}^2&=(I)+(II)+(III)+(IV)\\
    &\lesssim (\|u_0\|^2_{ H^s_x}+\|\lambda(\abs{x})^{-1}u_0\|^2_{H^{s-2}_x})\\
    &\quad + T (1+\|u_0\|^{2(p+q)}_{ H^s_x}+
    \|\lambda(\abs{x})^{-1}u_0\|^2_{H_x^{s-2}}\norm{u_0}_s^{2(p+q-1)}
    )(\|u\|_{X_T^s}^{2(p+q+1)}+\|u\|_{X_T^s}^{2})\\
    &\leq C_1 (\|u_0\|^2_{ H^s_x}+\|\lambda(\abs{x})^{-1}u_0\|^2_{H^{s-2}_x})\\
    &\quad+ C_2T\Big(1+\Big(\|u_0\|^{2}_{H^s_x}+\|\lambda(\abs{x})^{-1}u_0\|^{2}_{H^{s-2}_x}\Big)^{p+q}\Big)\Big(\|u\|_{X_T^s}^{2(p+q+1)}+\|u\|_{X_T^s}^{2}\Big),\label{eqn.contraction}
\end{align}
for some $C_1,C_2>0$.
Now, by choosing $R\geq \sqrt{2C_1(\|u_0\|^2_{H^s_x}+\|\lambda(\abs{x})^{-1}u_0\|^2_{H^{s-2}_x})}$ and 
\begin{align}
    T=\min\left\{1 , \frac{R^2}{2 C_2(1+(2C_1)^{-(p+q)}R^{2(p+q)})(R^{2(p+q+1)}+R^2)}\right\},
\end{align}
we conclude that
\[\|\Phi_{u_0}(u)\|_{X^s_T}^2\leq R^2,\]
hence $\Phi_{u_0}$ sends $B_R$ into itself, which concludes the first step of the proof.

The second step of the proof is to show that the map $\Phi_{u_0}$ is a contraction on $B_R$. Given $u_1,u_2\in B_R$, we denote $v_1:=\Phi_{u_0}(u_1)$ and $v_2:=\Phi_{u_0}(u_2)$. The application of the linear smoothing estimates on $v_1-v_2$ yields 
\begin{equation}\label{eqcont}
\begin{split}
\norm{\Phi_{u_0}(u_1)-\Phi_{u_0}(u_2)}^2_{X_T^s}&=\norm{v_1-v_2}_{X_T^s}^2\\
&= \st\norm{v_1-v_2}_{H^s_x}^2+\int_0^T \int_{\R^n}\lambda(\abs{x})\abs{\Lambda^{s+1}(v_1-v_2)}^2 \, dx \, dt+ \\
    &\quad + \st\norm{\lambda (\abs{x})^{-1}(v_1-v_2)}_{H^{s-2N-2}_x}^2+\st\norm{ \lambda (\abs{x})^{-1}\partial_t (v_1-v_2)}^2_{H_x^{s-2N-5}}\\
    &\lesssim \int_0^T\int_{\R^n}\lambda(\abs{x})^{-1}\abs{\Lambda^{s-1}(\tilde{N}(u_1)-\tilde{N}(u_2))}^2 \, dx \, dt +\\
    &\quad + \st\norm{\lambda (\abs{x})^{-1}(v_1-v_2)}_{ H^{s-2N-2}_x}^2+\st\norm{ \lambda (\abs{x})^{-1}\partial_t (v_1-v_2)}^2_{H_x^{s-2N-5}}\\
    &=(V)+(VI)+(VII),
\end{split}
\end{equation}
where, for $j=1,2$,
\begin{align}
    \tilde{N}(u_j):= (u_j^p\bar{u}_j^q -u_0^p\bar{u}_0^q)D^\alpha u_j.
\end{align}

In what follows, we estimate only the term $(V)$, since the terms $(VI)$ and $(VII)$ can be handled similarly. 
First we write
\[
\begin{split}
\tilde{N}(u_1)-\tilde{N}(u_2)&=(u_1^p\bar{u}_1^q -u_0^p\bar{u}_0^q)D^\alpha u_1- (u_2^p\bar{u}_2^q -u_0^p\bar{u}_0^q)D^\alpha u_2\\
&= u_1^p\bar{u}_1^qD^\alpha u_1- u_2^p\bar{u}_2^qD^\alpha u_2 -u_0^p\bar{u}_0^q(D^\alpha u_1- D^\alpha u_2) \\
&=u_1^p\bar{u}_1^qD^\alpha u_1-u_2^p\bar{u}_1^qD^\alpha u_1+u_2^p\bar{u}_1^qD^\alpha u_1+\\
&\quad -u_2^p\bar{u}_2^qD^\alpha u_2 -u_0^p\bar{u}_0^q(D^\alpha u_1- D^\alpha u_2)\\
&=(u_1^p-u_2^p)\bar{u}_1^qD^\alpha u_1+u_2^p\bar{u}_1^qD^\alpha u_1 + \\
&\quad -u_2^p\bar{u}_2^qD^\alpha u_1 +u_2^p\bar{u}_2^qD^\alpha u_1 + \\
&\quad -u_2^p\bar{u}_2^qD^\alpha u_2 -u_0^p\bar{u}_0^q(D^\alpha u_1- D^\alpha u_2)\\
&=(u_1^p-u_2^p)\bar{u}_1^qD^\alpha u_1+(\bar{u}_1^q-\bar{u}_2^q)u_2^pD^\alpha u_1 +\\
&\quad +(D^\alpha u_1-D^\alpha u_2)u_2^p\bar{u}_2^q -u_0^p\bar{u}_0^q(D^\alpha u_1- D^\alpha u_2)\\
&= (u_1^p-u_2^p)\bar{u}_1^qD^\alpha u_1+(\bar{u}_1^q-\bar{u}_2^q)u_2^pD^\alpha u_1 +\\
&\quad +(D^\alpha u_1-D^\alpha u_2)(u_2^p\bar{u}_2^q -u_0^p\bar{u}_0^q),\\
\end{split}\]
and note that 
\begin{align}
 u_1^p-u_2^p&= \int_{0}^t \partial_s(u_1^p-u_2^p)(s) \, ds\\
 u_2^p\bar{u}_2^q -u_0^p\bar{u}_0^q&= \int_{0}^t \partial_s u_2^p\bar{u}_2^q(s) \, ds.
\end{align}
Then, splitting the nonlinear term as the sum of the previous three terms, we get that $(V)$ can be written as a sum of three terms too, that we call $(V)'$,$(V)''$ and $(V)'''$. 
The first term of the sum in $(V)$, that is $(V)'$,  is then given by
\[
(V)':=\int_0^T\int_{\R^n}\lambda(\abs{x})^{-1}\abs{\Lambda^{s-1}(u_1^p-u_2^p)\bar{u}_1^qD^\alpha u_1}^2 \, dx \, dt.
\]
To estimate $(V)'$ we repeat the steps in the estimate of $(II)_a,(II)_b$ and $(II)_c$ and obtain 
\[
\begin{split}
(V)'&=\int_0^T\int_{\R^n}\lambda(\abs{x})^{-1}\abs{\Lambda^{s-1}(u_1^p-u_2^p)D^\alpha u_1 \bar{u}_1^q}^2 \, dx \, dt \\
&\lesssim  \int_0^T \int_{\R^n}\lambda(\abs{x})\abs{\Lambda^{s-1} D^\alpha u_1}^2\cdot \abs{\lambda(\abs{x})^{-1} (u_1^p-u_2^p)\bar{u}_1^q}^2 \, dx \, dt \\
& + \sum_{\substack{\abs{\gamma_1}+\abs{\gamma_2}\leq s-1 \\ s/2-1/2 \leq \abs{\gamma_1} < s-1, \; \abs{\gamma_2}\leq s/2-1/2}}C_{\gamma_1,\gamma_2,s}\int_0^T \int_{\R^n}\lambda(\abs{x})^{-1}\abs{(D^{\gamma_1} D^\alpha u_1)(D^{\gamma_2}(u_1^p-u_2^p)\bar{u}_1^q)}^2 \, dx \, dt \\
& + \sum_{\substack{\abs{\gamma_1}+\abs{\gamma_2}\leq s-1 \\  \abs{\gamma_1} < s/2-1/2, \; \abs{\gamma_2}> s/2-1/2}}C_{\gamma_1,\gamma_2,s}\int_0^T \int_{\R^n}\lambda(\abs{x})^{-1}\abs{(D^{\gamma_1} D^\alpha u_1)(D^{\gamma_2}(u_1^p-u_2^p)\bar{u}_1^q)}^2 \, dx \, dt\\
&\lesssim T^2\|u_1\|_{X^s_T}^2 \|\lambda(\abs{x})^{-1}\partial_t(u_1^p-u_2^p)\|^2_{L^\infty_tH_x^{n/2+\varepsilon}}\|u_1\|^{2q}_{L^\infty_tH_x^{n/2+\varepsilon}}+(V)'_b+(V)'_c\\
&\lesssim T^2\|u_1\|_{X^s_T}^{2(q+1)} \|\lambda(\abs{x})^{-1}\partial_t(u_1-u_2)\|^2_{L^\infty H_x^{n/2+\varepsilon}}\|u_1^{p-1} -u_2^{p-1}\|^2_{L^\infty_tH_x^{n/2+\varepsilon}}+(V)'_b+(V)'_c\\
&\lesssim T^2\|u_1\|_{X^s_T}^{2(q+1)}\|u_1-u_2\|_{X^s_T}^{2}(\|u_1\|_{X^s_T}^{2(p-1)}+\|u_2\|_{X^s_T}^{2(p-1)})+(V)'_b+(V)'_c,
\end{split}
\]
where, for $\varepsilon$ sufficiently small as above,
\begin{align}
 (V)'_b&\lesssim T \sum_{\substack{|\gamma_2'|+|\gamma_2''|\leq s/2-1/2}}\|u_1\|_{L^\infty_tH_x^{s}}^2\|\lambda(\abs{x})^{-1}D^{\gamma_2'}(u_1^p-u_2^p) \|_{L^\infty H_x^{n/2+\varepsilon}}^2\|D^{\gamma_2''}\bar{u}_1^q \|_{L^\infty H_x^{n/2+\varepsilon}}^2\\
 &\lesssim T \|u_1\|_{L^\infty_tH_x^{s}}^2\|\lambda(\abs{x})^{-1}(u_1^p-u_2^p) \|_{L^\infty H_x^{s/2-1/2+n/2+\varepsilon}}^2\|u_1 \|_{L^\infty H_x^{s/2-1/2+n/2+\varepsilon}} ^{2q}\\
 &\lesssim T \sum_{j=0}^{p-1}\|u_1\|_{L^\infty_tH_x^{s}}^2\|\lambda(\abs{x})^{-1}(u_1-u_2) \|_{L^\infty H_x^{s-2N-5}}^2\|u_1\|^{2(p-j-1)}_{L^{\infty}_tH^{s-2N-5}}\|u_2\|^{2j}_{L^{\infty}_tH_x^{s-2N-5}}\|u_1 \|_{L^\infty H_x^{s-2N-5}} ^{2q}\\
 &\lesssim T  \|u_1-u_2\|_{X^s_T}^2\|u_1\|_{X^s_T}^{2(q+1)}(\|u_1\|_{X^s_T}+\|u_2\|_{X^s_T})^{2(p-1)},
\end{align}
and, analogously,
\begin{align}
   (V)'_c&\lesssim T \|\lambda(\abs{x})^{-1} u_1\|^2_{L^\infty_t H_x^{s/2+1/2}} \|u_1-u_2\|^2_{L^\infty_tH_x^{s-1}} (\|u_1\|_{L^\infty_tH_x^{s-1}}+\|u_2\|_{L^\infty_tH_x^{s-1}})^{2(p-1)}\|u_1\|^{2q}_{L^\infty_tH_x^{s-1}}\\
   &\lesssim T  \|u_1-u_2\|_{X^s_T}^2\|u_1\|_{X^s_T}^{2(q+1)}(\|u_1\|_{X^s_T}+\|u_2\|_{X^s_T})^{2(p-1)}.
\end{align}
Hence, for $T<1$,
\begin{align}
    (V)'&\lesssim T\|u_1\|_{X^s_T}^{2(q+1)}\|u_1-u_2\|_{X^s_T}^{2}(\|u_1\|_{X^s_T}+\|u_2\|_{X^s_T})^{2(p-1)}.
\end{align}
By similar calculations
\begin{align}
    (V)''&:=\int_0^T\int_{\R^n}\lambda(\abs{x})^{-1}\abs{\Lambda^{s-1}(\bar{u}_1^q-\bar{u}_2^q)u_2^pD^\alpha u_1}^2 \, dx \, dt \\
    &\lesssim T \|u_2\|_{X^s_T}^{2(p+1)}\|u_1-u_2\|_{X^s_T}^{2}(\|u_1\|_{X^s_T}+\|u_2\|_{X^s_T})^{2(q-1)},
\end{align}
while
\begin{align}
(V)'''&:= \int_0^T\int_{\R^n}\lambda(\abs{x})^{-1}\abs{\Lambda^{s-1}(D^\alpha u_1-D^\alpha u_2)(u_2^p\bar{u}_2^q -u_0^p\bar{u}_0^q)}^2 \, dx \, dt \\
&\lesssim  \int_0^T \int_{\R^n}\lambda(\abs{x})\abs{\Lambda^{s-1} D^\alpha (u_1-u_2)}^2\cdot \abs{\lambda(\abs{x})^{-1} (u_2^p\bar{u}_2^q-u_0^p\bar{u}_0^q)}^2 \, dx \, dt \\
& + \sum_{\substack{\abs{\gamma_1}+\abs{\gamma_2}\leq s-1 \\ s/2-1/2 \leq \abs{\gamma_1} < s-1, \; \abs{\gamma_2}\leq s/2-1/2}}C_{\gamma_1,\gamma_2,s}\int_0^T \int_{\R^n}\lambda(\abs{x})^{-1}\abs{(D^{\gamma_1} D^\alpha (u_1-u_2))(D^{\gamma_2}(u_2^p\bar{u}_2^q-u_0^p\bar{u}_0^q)}^2 \, dx \, dt \\
& + \sum_{\substack{\abs{\gamma_1}+\abs{\gamma_2}\leq s-1 \\  \abs{\gamma_1} < s/2-1/2, \; \abs{\gamma_2}> s/2-1/2}}C_{\gamma_1,\gamma_2,s}\int_0^T \int_{\R^n}\lambda(\abs{x})^{-1}\abs{(D^{\gamma_1} D^\alpha (u_1-u_2))(D^{\gamma_2}(u_2^p\bar{u}_2^q-u_0^p\bar{u}_0^q)}^2 \, dx \, dt\\
&\lesssim T^2\Bigg( \int_{0}^T\int_{\R^n}\lambda(\abs{x})|\Lambda^{s+1}(u_1-u_2)^2\,dx\,dt\,\Bigg) \,\|\lambda(\abs{x})^{-1}\partial_t(u_2^p\Bar{u}_2^q)\|_{L^\infty_tH_x^{n/2+\varepsilon}}+(V)'''_b+(V)'''_c\\
&\lesssim T^2 \|u_1-u_2\|_{X^s_T}^2
\big(\|u_2^{p-1}\bar{u}_2^q\lambda(\abs{x})^{-1}\partial_tu_2\|^2_{L^\infty_tH_x^{n/2+\varepsilon}}+ \|u_2^{p}\bar{u}_2^{q-1}\lambda(\abs{x})^{-1}\partial_t\bar{u}_2\|^2_{L^\infty_tH_x^{n/2+\varepsilon}}\big)+(V)'''_b+(V)'''_c\\
&\lesssim T^2 \|u_1-u_2\|_{X^s_T}^2\|\lambda(\abs{x})^{-1}\partial_tu_2\|_{L^\infty_t H_x^{s-2N-5}}^2\|u_2\|_{L^\infty_t H^s_x}^{2(p+q-1)}+(V)'''_b+(V)'''_c\\
&\lesssim T^2 \|u_1-u_2\|_{X^s_T}^2\|u_2\|_{X^s_T}^{2(p+q)}+ (V)'''_b+(V)'''_c.
\end{align}
Now, for $(V)'''_b$ we have
\begin{align}
    (V)'''_b&\lesssim T^2\|u_1-u_2\|^2_{X^s_T}\|\lambda(\abs{x})^{-1}\partial_t u_2\|^2_{L^\infty_tH_x^{n/2+s/2-1/2+\varepsilon}}\|u_2\|_{L^\infty_tH_x^{n/2+s/2-1/2+\varepsilon}}^{2(p+q-1)}\\
    &\lesssim T^2 \|u_1-u_2\|_{X^s_T}^2\|u_2\|_{X^s_T}^{2(p+q)},
\end{align}
and for $(V)'''_c$
\begin{align}
    (V)'''_c&\lesssim T \|\lambda(\abs{x})^{-1}(u_1-u_2)\|^2_{L^\infty_t H_x^{s/2+1/2}}\big(\|u_2\|^{2(p+q)}_{L^\infty_t H_x^{s-1}} + \|u_0\|^{2(p+q)}_{H_x^{s-1}} \big)\\
    &\lesssim T \|u_1-u_2\|_{X^s_T}^2\big(\|u_2\|^{2(p+q)}_{X^s_T} + \|u_0\|^{2(p+q)}_{ H_x^{s}} \big).
\end{align}
Therefore, collecting the estimates for each term and using that $u_1,u_2\in B_R$, we get
\begin{align}
    (V)&\lesssim T (\|u_0\|^{2(p+q)}_{ H_x^{s}} + R^{2(p+q)}) \|u_1-u_2\|_{X^s_T}^2\\
    & \lesssim T R(\|u_0\|^{2(p+q)}_{ H^{s}_x}+\|\lambda(\abs{x})^{-1}u_0\|^{2(p+q)}_{ H_x^{s-2}} + R^{2(p+q)})\|u_1-u_2\|^2_{X^s_T}.
\end{align}

At this point, since $(VI)$ and $(VII)$ satisfy the same estimate as $(V)$, we obtain
\begin{align}
    \norm{\Phi_{u_0}(u_1)-\Phi_{u_0}(u_2)}_{X_T^s}\leq  \sqrt{T} C(R)\norm{u_1-u_2}_{X_T^s}
\end{align}
for some positive constant $C(R)$  depending only on  $R=R(\|u_0\|_{ H_x^{s}},\|\lambda(\abs{x})^{-1}u_0\|_{ H_x^{s-2}})$. 
Finally,  taking $T$ possibly smaller than before (depending on $R$, hence on $\|u_0\|_{ H_x^{s}}$ and $\|\lambda(\abs{x})^{-1}u_0\|_{ H_x^{s-2}}$),  we conclude that
\begin{equation}\label{eqcont2}
\norm{\Phi_{u_0}(u_1)-\Phi_{u_0}(u_2)}_{X_T^s}\leq C \norm{u_1-u_2}_{X_T^s},
\end{equation}
for some $C<1$. This shows that $\Phi_{u_0}:B_R\rightarrow B_R$ is a contraction, which, in particular, gives the first part of the theorem by the  fixed point theorem.

We are now left with the proof of the second part of the theorem, that is, the continuity property in the statement.
Let $u_0\in \mathscr{S}(\R^n)$, and let $U_{u_0}$ be a suitable neighborhood of $u_0$ in $\mathscr{S}(\R^n)$. Then, for $u_0' \in U_{u_0}$ and $u,u'$ solutions of \eqref{NLIVP3} with initial values $u_0$ and $u_0'$ respectively, 
we have
\begin{align}
    u-u'=\Phi_{u_0}(u)-\Phi_{u_0'}(u')=W(t)(u_0-v_0)+\int_0^tW(t-t')(\tilde{N}(u,\bar{u},D^\alpha u)-\tilde{N}(u',\bar{u'},D^\alpha u'))dt',
\end{align}
where
\begin{align}
 \tilde{N}(u,\bar{u},D^\alpha u)-\tilde{N}(u',\bar{u'},D^\alpha u')&=(u^p\bar{u}^q-u_0^p\bar{u}_0^q) D^\alpha u-({u'}^p\bar{u'}^q-{u'}_0^p\bar{u'}_0^q) D^\alpha u'\\
 &=\Big(u^p\bar{u}^q-u'^p\bar{u'}^q-( u_0^p\bar{u}_0^q-{u'}_0^p\bar{u'}_0^q)\Big) D^\alpha u+\\
 &\quad+({u'}^p\bar{u'}^q-{u'}_0^p\bar{u'}_0^q) (D^\alpha u-D^\alpha u'),
\end{align}
and
\begin{align}
    u^p\bar{u}^q-{u'}^p\bar{u'}^q-( u_0^p\bar{u}_0^q-{u'}_0^p\bar{u'}_0^q)&=\int_0^t\partial_s(u^p\bar{u}^q-{u'}^p\bar{u'}^q)(s) ds, \label{eqn09031848}\\
    {u'}^p\bar{u'}^q-{u'}_0^p\bar{u'}_0^q&=\int_0^t\partial_s{u'}^p\bar{u'}^q(s) ds.\label{eqn09031849}
\end{align}

Therefore, if $u,u'$ belong to an open ball (centered at $0$) in $ X^s_{T'}$ with $T'$ to be determined, the estimates above, especially those leading to \eqref{eqn.contraction} which exploit \eqref{eqn09031848} and \eqref{eqn09031849}, give that there exist  $C', C''>0$, with $C''$ depending on the radius of the ball, such that
\[
\norm{u-u'}^2_{X_{T'}^s}=\norm {\Phi_{u_0}(u)-\Phi_{u_0'}(u')}^2_{X_{T'}^s}\leq C' (\norm{u_0-v_0}_{H_x^s}^2+\norm{\lambda(\abs{x})(u_0-v_0)}^2_{H_x^{s-2}})+C''T'\norm{u-u'}^2_{X_{T'}^s}.
\]
Hence, for $T'>0$ satisfying $C''T'<1/2$, we get
\begin{equation}
\norm{u-u'}_{X_{T'}^s}=\norm {\Phi_{u_0}(u)-\Phi_{u_0'}(u')}_{X_{T'}^s}\leq \sqrt{2C'} (\norm{u_0-v_0}_{H_x^s}+\|\lambda(\abs{x})^{-1}(u_0-v_0)\|_{H^{s-2}_x}).\label{09031034}
\end{equation}
Finally, since the right hand side of \eqref{09031034} is bounded by finitely many seminorms in $\mathscr{S}$, we can conclude that the map 
\[
U_{u_0} \ni v_0 \mapsto v \in X_{T'}^s
\]
is continuous.
\end{proof}

\section{Non-trapping for some admissible operators}\label{sec.nontrapping}
Here we investigate the so called \textit{nontrapping} properties of a class of operators of order $m$ with principal symbol of the form \eqref{amdiff}. The nontrapping property of an operator $A$, or, equivalently, of its principal symbol $a_m$, at a point $(x_0,\xi_0)$ of the phase space, refers to the following behavior of the bicharacteristics: the bicharacteristic curves of the principal symbol $a_m$ of $A$ starting from $(x_0,\xi_0)$ escape from any compact set (see Definition \ref{nontrapped}).

Smoothing estimates for variable coefficient operators of order two, that is for Schr\"odinger and ultrahyperbolic Schr\"odinger operators with variable coefficients,  are strongly related to the non-trapping property of the principal symbol of the space-dependent leading order operator $A$ in $D_t-A$. 
In \cite{D96} and \cite{KPRV05} the \textit{nontrapping condition}, which amounts to the non-trapping property of $A$ at each point of the cotangent space, is used to prove the suitable versions of Lemma \ref{newDoilem}, which, in turn, allowed the authors to establish the smoothing estimates contained therein. Moreover, even for the microlocal smoothing estimates derived in \cite{CKS}, which are proved by using a different strategy than that in \cite{D96}, the nontrapping condition is again a fundamental requirement to get the result. 

In the other direction, that is whether smoothing estimates for $D_t-A$ imply nontrapping properties of $A$, a positive result was established by Doi in \cite{D96(1)} when $A$ is an elliptic operator of order two with a real homogeneous symbol  $a\in S^2(\R^n)$.
More precisely, Doi showed that if the microlocal smoothing estimate for $D_t-A$ holds at a point $(x_0,\xi_0)$ of the cotangent space, then the point is nontrapped. We remark that study of the set of trapped points $S^A_\mathrm{trap}$ (see Definition \ref{nontrapped}), for a given operator $A$, is of interest independently of its implications in the validity of smoothing estimates. In this respect, a result is due to Doi in \cite{D96}, who studied $S^A_\mathrm{trap}$ when $A$ is elliptic of order two without assuming any smoothing-type property for $D_t-A$.
Doi's result (see Lemma 1.3 in \cite{D96}) shows that if the (real) principal symbol $a_2 $ of $A\in \Psi_{\mathrm{cl}}^2(\R^n)$ satisfies suitable conditions (additional to ellipticity), then the set $S^A_{\mathrm{trap}}$ is compact.

Note that all the results listed above refer to the case $A\in\Psi_{\mathrm{cl}}^m(\R^n)$ with $m=2$. In fact, for operators of order $m\geq 3$, even elliptic ones, there is no result giving information about $S^A_{\mathrm{trap}}$.  In general, when the operator has variable coefficients, it is very hard to describe the behavior of the bicharacteristics, which is the main reason why we avoided any use of these objects in our previous analysis. Indeed, our smoothing result in Section \ref{sec.smoothing.estimates} does not rely on the non-trapping condition $S^A_\mathrm{trap}=\emptyset$, since our Lemma \ref{newDoilem} is not based on this property.  
Nevertheless, due to the aforementioned connection between non-trapping conditions and smoothing effect, and to the lack of results on the non-trapping properties of operators of order greater than two at any or some point, even elliptic ones, we decided to investigate this problem here. 

We shall consider $A=\Op^\w(a) \in \Psi_{\mathrm{cl}}^m(\R^n)$, with $m\in \mathbb{N}$, such that
its principal symbol $a_m$ is of the form
\begin{equation}\label{amdiff}
a_m(x,\xi)=\sum_{\abs{\alpha}=m}a_\alpha(x)\xi^\alpha,
\end{equation}
 with $a_\alpha \in C_b^\infty(\R^n)$ for all $\alpha \in \mathbb{N}_0^n$, $\abs{\alpha}=m$. 

Our goal is to show that under our conditions \eqref{ellderivatives} and \eqref{smallcoeffa2a3} on $a_m$, the nontrapping property of the so called \textit{strongly elliptic points} of $A$ (see Definition \ref{def.elliptic.cosphere}) holds; in other words, the bicharacteristic curves of the principal symbol $a_m$ of $A$ starting from such points escape from any compact set.

Before going into details, we first fix the notation. For $(x_0,\xi_0) \in \R^n \times (\R^n \setminus \lbrace 0 \rbrace)$, let $\gamma_{(x_0,\xi_0)}:I \rightarrow \R^n \times \R^n$ be the maximally extended bicharacteristic curve of $a_m$ starting from $(x_0,\xi_0)$ (see Definition \ref{biccurve}), where $I\subseteq \R$ is the maximal interval. We shall denote such a curve by $(x(t;x_0,\xi_0),\xi(t;x_0,\xi_0))$ (or simply $(x(t),\xi(t)$) and  say that it is \textit{globally defined} when $I=\R$.

\begin{definition}\label{def.elliptic.cosphere}
Let $A \in \Psi^m(\R^n)$. We define \textit{the elliptic co-sphere associated with $A$} as
\[
S_{A}^\ast \R^n= \lbrace (x,\xi) \in \R^n \times (\R^n \setminus \lbrace 0 \rbrace); \ a_m(x,\xi)=1 \rbrace,
\]
where $a_m$ is the principal symbol of $A$. If $(x_0,\xi_0) \in S_A^\ast \R^n$, we say that $(x_0,\xi_0)$ is \textit{an elliptic point} for $A$. Moreover, we say that a point $(x_0,\xi_0) \in S_A^\ast(\R^n)$ is \textit{a strongly elliptic point} for $A$ if there exists a constant $C=C(x_0,\xi_0)>0$ such that 
\begin{equation}\label{ellsymbol}
C^{-1}|\xi(t)|^m\leq |a_m(x(t),\xi(t))|\leq C |\xi(t)|^m, \quad \forall t \in I.
\end{equation}
\end{definition}
\begin{remark}
Note that, by Definition \ref{def.elliptic.cosphere}, the standard co-sphere 
\[
S^\ast \R^n:=\lbrace (x,\xi) \in \R^n \times (\R^n \setminus  \lbrace 0 \rbrace); \ \abs{\xi}=1 \rbrace,
\]
(where here $|\cdot|$ denotes the standard Euclidean norm) can be seen as the elliptic co-sphere associated with the (positive) Laplacian, i.e. $S^\ast \R^n=S_\Delta^\ast \R^n$.
\end{remark}
For the reader's convenience, we recall here the definition of \textit{trapped} and \textit{nontrapped points} (see Craig, Kappeler and Strauss \cite{CKS} or Doi \cite{D96}).
\begin{definition}\label{nontrapped}
Let $A \in \Psi^m(\R^n)$. We say that $(x_0,\xi_0)\in \R^n \times (\R^n \setminus \lbrace 0 \rbrace)$ is \textit{forward (backward) nontrapped} (by the bicharacteristics of the principal symbol $a_m$), if $(x(t;x_0,\xi_0),\xi(t,x_0,\xi_0))$ is globally defined, and 
\[
\lim_{t\rightarrow +\infty}\abs{x(t;x_0,\xi_0)}=+\infty \quad (\lim_{t\rightarrow -\infty}\abs{x(t;x_0,\xi_0)}=+\infty).
\]
A point $(x_0,\xi_0) \in \R^n \times (\R^n \setminus \lbrace 0 \rbrace)$ is said to be \textit{nontrapped} if it is either forward or backward nontrapped. Finally, we say that a point is \textit{trapped} if it is not nontrapped, and we denote by 
\[
S^A_{\mathrm{trap}}=\Bigl\lbrace (x_0,\xi_0) \in \R^n \times (\R^n \setminus \lbrace 0 \rbrace); \ (x_0,\xi_0) \ \mathrm{is} \ \mathrm{trapped }\Bigr\rbrace.
\]
\end{definition}
\begin{remark}
    Note that a point $(x_0,\xi_0) \in S^\ast \R^n$ is nontrapped if for every $R>0$, there exists $t_R \in \R$ such that 
    \[
 \abs{x(t_R;x_0,\xi_0)}\geq R.
    \]
\end{remark}
Our goal is now to prove the following theorem showing a non-trapping property at \textit{strongly elliptic points} for operators (not necessarily elliptic) whose principal symbol satisfies \eqref{ellderivatives} and \eqref{smallcoeffa2a3} .
\begin{theorem}
Let $A \in \Psi_{\mathrm{cl}}^m(\R^n)$, $m\in\mathbb{N},$ be such that its principal symbol $a_m$ is of the form \eqref{amdiff} and satisfies \eqref{ellderivatives} and \eqref{smallcoeffa2a3}. Then, if $(x_0,\xi_0) \in S_A^\ast(\R^n)$ is a strongly elliptic point, the bicharacteristic curve $\gamma_{(x_0,\xi_0)}$ is globally defined (i.e. $I=\R$) and $(x_0,\xi_0) \notin S^A_{\mathrm{trap}}$.
\end{theorem}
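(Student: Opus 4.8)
\emph{Proof plan.}
Write the bicharacteristic as $(x(t),\xi(t))=\gamma_{(x_0,\xi_0)}(t)$, $t\in I$, so that Hamilton's equations $\dot x=\nabla_\xi a_m(x,\xi)$, $\dot\xi=-\nabla_x a_m(x,\xi)$ hold on $I$. The first step is to confine $|\xi(t)|$. Since $\frac{d}{dt}a_m(x(t),\xi(t))=H_{a_m}a_m\equiv 0$ and $(x_0,\xi_0)\in S_A^\ast\R^n$, we have $a_m(x(t),\xi(t))=1$ for all $t\in I$. By Euler's identity $a_m=\frac1m\,\xi\cdot\nabla_\xi a_m$ (valid since $a_m$ in \eqref{amdiff} is homogeneous of degree $m$ in $\xi$) together with the upper bound in \eqref{ellderivatives}, $1=|a_m(x(t),\xi(t))|\le \frac Cm|\xi(t)|^m$, giving the \emph{point-independent} lower bound $|\xi(t)|\ge\delta_0:=(m/C)^{1/m}$ on $I$; the strong ellipticity of $(x_0,\xi_0)$ together with $a_m(x(t),\xi(t))=1$ gives the upper bound $|\xi(t)|\le\Delta:=C(x_0,\xi_0)^{1/m}$ on $I$. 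Then $|\dot x(t)|=|\nabla_\xi a_m(x(t),\xi(t))|\le C|\xi(t)|^{m-1}\le C\Delta^{m-1}$ on $I$, so $|x(t)-x_0|\le C\Delta^{m-1}|t|$; hence on every bounded subinterval of $I$ the curve stays in a compact subset of $\R^n\times(\R^n\setminus\{0\})$, and the standard ODE escape lemma forces $I=\R$.

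The heart of the proof is an escape function adapted from Proposition \ref{propsuffadmiss}, but kept homogeneous. Define on $\R^n\times(\R^n\setminus\{0\})$
\[
\widetilde q(x,\xi):=|\xi|^{-(m-1)}\sum_{j=1}^n x_j\,\partial_{\xi_j}a_m(x,\xi);
\]
this is smooth there, homogeneous of degree $0$ in $\xi$, and by the upper bound in \eqref{ellderivatives} satisfies $|\widetilde q(x,\xi)|\le C|x|\le C\langle x\rangle$. Computing $H_{a_m}\widetilde q=\nabla_\xi a_m\cdot\nabla_x\widetilde q-\nabla_x a_m\cdot\nabla_\xi\widetilde q$ exactly as in the proof of Proposition \ref{propsuffadmiss}, the leading term is $|\xi|^{-(m-1)}|\nabla_\xi a_m|^2\ge C^{-2}|\xi|^{m-1}$ by \eqref{ellderivatives}, while every remaining term carries a factor $\nabla_x\partial_\xi^\alpha a_m$ and is therefore bounded by $\varepsilon\,\tfrac{|x|}{\langle x\rangle}|\xi|^{m-1}\le \varepsilon\,c(n,m,C)|\xi|^{m-1}$ by \eqref{smallcoeffa2a3}. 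Choosing $\varepsilon$ small enough in \eqref{smallcoeffa2a3} (with threshold depending only on $n,m,C$) yields the global bound
\[
H_{a_m}\widetilde q(x,\xi)\ge \tfrac{1}{2C^2}|\xi|^{m-1},\qquad (x,\xi)\in\R^n\times(\R^n\setminus\{0\}).
\]

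The conclusion is then immediate. Along the globally defined bicharacteristic, $t\mapsto\widetilde q(x(t),\xi(t))$ is $C^1$ (the curve stays at distance $\ge\delta_0$ from $\xi=0$) and
\[
\frac{d}{dt}\widetilde q(x(t),\xi(t))=H_{a_m}\widetilde q(x(t),\xi(t))\ge \tfrac{1}{2C^2}|\xi(t)|^{m-1}\ge \tfrac{1}{2C^2}\delta_0^{m-1}=:\kappa>0,
\]
so $\widetilde q(x(t),\xi(t))\ge\widetilde q(x_0,\xi_0)+\kappa t\to+\infty$ as $t\to+\infty$; since $\widetilde q(x(t),\xi(t))\le C\langle x(t)\rangle$, this forces $|x(t)|\to+\infty$. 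Hence $(x_0,\xi_0)$ is forward nontrapped, so $\gamma_{(x_0,\xi_0)}$ is globally defined and $(x_0,\xi_0)\notin S^A_{\mathrm{trap}}$.

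I expect the only delicate point to be obtaining the lower bound for $H_{a_m}\widetilde q$ with a constant \emph{independent of the point} $(x_0,\xi_0)$: the Gårding weight of Proposition \ref{propsuffadmiss} only furnishes $H_{a_m}q\ge C_1|\xi|^{m-1}-C_2$, and along a bicharacteristic confined to the fixed annulus $\delta_0\le|\xi|\le\Delta$ the subtracted constant $C_2$ cannot in general be absorbed unless $\Delta$ is itself controlled; passing to the homogeneous weight $\widetilde q$ eliminates $C_2$ and makes the smallness threshold for $\varepsilon$ depend only on $n,m$ and the ellipticity constant $C$, which is precisely what the statement requires. Everything else reduces to the conservation law $H_{a_m}a_m=0$, Euler's identity, and the a priori bounds \eqref{ellderivatives}--\eqref{smallcoeffa2a3}.
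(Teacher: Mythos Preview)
Your proof is correct and follows the same overall strategy as the paper: both confine $|\xi(t)|$ using $a_m\equiv 1$ along the flow, then differentiate an escape function of the form $x\cdot\nabla_\xi a_m$ normalized by a $(m-1)$-st power of $|\xi|$ to force $\langle x(t)\rangle\to\infty$.

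The difference lies in the normalization. The paper uses the regularized weight $q_\delta(x,\xi)=\langle\xi\rangle_\delta^{-(m-1)}\,x\cdot\nabla_\xi a_m$ with $\langle\xi\rangle_\delta=(\delta+|\xi|^2)^{1/2}$; this produces a lower bound $H_{a_m}q_\delta\ge(C-\varepsilon C')|\xi|^{m-1}-(D\delta^{(m-1)/2}+\varepsilon D')$ with an additive constant, and then needs Lemma~\ref{lemmatec3} plus a choice of $\delta$ small (depending on the strong-ellipticity constant of the point) to absorb it. Your fully homogeneous weight $\widetilde q=|\xi|^{-(m-1)}\,x\cdot\nabla_\xi a_m$ is legitimate because the curve stays at distance $\ge\delta_0>0$ from $\xi=0$, and it yields $H_{a_m}\widetilde q\ge \tfrac{1}{2C^2}|\xi|^{m-1}$ with \emph{no} subtracted constant; combined with your point-independent lower bound $|\xi(t)|\ge(m/C)^{1/m}$ (via Euler's identity and \eqref{ellderivatives}), this makes both the smallness threshold for $\varepsilon$ and the escape rate $\kappa$ depend only on $n,m,C$, not on $(x_0,\xi_0)$. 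The paper's route is slightly more general in that $q_\delta$ is a genuine $S^0$ symbol on all of $\R^{2n}$ (useful if one wanted to quantize it), whereas $\widetilde q$ is only smooth away from $\xi=0$; for the purely geometric statement here, your version is the more economical argument.
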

\begin{proof}
 We start the proof by showing that the bicharacteristic curve starting from a strongly elliptic point $(x_0,\xi_0) \in S^\ast_A \R^n$ is globally defined. To see this it suffices to note that $a_m$ is constant and satisfies \eqref{ellsymbol} on $S^\ast_A \R^n$. Consequently, $1/c \leq \abs{\xi(t)} \leq c$ for all $t \in I$, for some constant $c=c(x_0,\xi_0)>0$. Thus, using a Gronwall-type argument, we can conclude that the bicharacteristic curve starting from $(x_0,\xi_0)$ is globally defined. 

 We now show that $(x_0,\xi_0)$ is nontrapped. We define $q_\delta \in C^\infty(\R^n \times \R^n)$ as 
\begin{equation}
q_\delta(x,\xi):=\langle \xi \rangle_\delta ^{-(m-1)}\sum_{j=1}^n x_j\partial_{\xi_j}a_m(x,\xi),
\end{equation}
where $\langle \xi \rangle_\delta:=(\delta+\abs{\xi}^2)^{1/2}$ for some $\delta>0$ sufficiently small to be chosen.
 
By the fundamental theorem of integral calculus we have
    \begin{equation}\label{geomdelta}
    q_\delta(x(t),\xi(t))=q_\delta(x_0,\xi_0)+\int_0^t H_{a_m}q_\delta(x(s),\xi(s)) \, ds.
    \end{equation}
    Hence, by repeating the argument in \eqref{ellderimplyadmiss}, 
\begin{equation}
H_{a_m} q_\delta(x,\xi) \geq 2C_1\abs{\xi}^{2(m-1)}\langle \xi \rangle_\delta ^{-(m-1)}-\varepsilon C' \langle \xi \rangle^{2(m-1)}\langle \xi \rangle_\delta^{-(m-1)}  -\varepsilon C''\langle \xi \rangle^{m-1}
\end{equation}
for some $C_1,C',C''>0$ universal constants.

When $m=1$ then, for $\varepsilon$ sufficiently small, we have
$$ H_{a_m} q_\delta(x,\xi) \geq C,$$
for some $C>0$. Therefore, when $m=1$,
\[
    \langle x(t) \rangle \gtrsim \abs{q_\delta(x(t),\xi(t))} \gtrsim q_\delta(x_0,\xi_0)+C t,\quad \forall t\geq 0,
    \]
    which shows that $(x_0,\xi_0)$ is nontrapped.

For $m>1$, we exploit the following inequalities (see Lemma \ref{lemmatec3} in the Appendix)
\begin{align}
    \langle \xi\rangle^{m-1}_\delta\leq&  \langle \xi\rangle^{m-1}\leq 2^{\frac{m-1}{2}}|\xi|^{m-1}+2^{\frac{m-1}{2}}, \quad \forall \delta\in (0,1],\\
    \langle \xi\rangle^{-(m-1)}_\delta|\xi|^{2(m-1)}\geq &C_3|\xi|^{m-1}-C_4\delta^{\frac{m-1}{2}}, \quad \forall \delta\in(0,1],\quad \text{for some}\,\, C_3,C_4>0,
\end{align}
and get, for some new positive constants $C,C',C'',D,D',D''$,
\begin{align}
    H_{a_m} q_\delta(x,\xi) &\geq C|\xi|^{m-1}-D\delta^{\frac{m-1}{2}}-\varepsilon C'|\xi|^{m-1}-\varepsilon D'-\varepsilon C'' |\xi|^{m-1}-\varepsilon D''\\
    &=(C-\varepsilon(C'+C''))|\xi|^{m-1}- (D\delta^{\frac{m-1}{2}}+\varepsilon(D'+D'')).\label{eqn10031826}
\end{align}
Then, by applying \eqref{eqn10031826} into \eqref{geomdelta}, we obtain
    \[
    q_\delta(x(t),\xi(t))\geq q_\delta(x_0,\xi_0)+\int_0^t( (C-\varepsilon(C'+C'')) \abs{\xi(s)}^{m-1}-(D\delta^{\frac{m-1}{2}}+\varepsilon(D'+D''))ds.
    \]
 Now we choose $\varepsilon$  sufficiently small so that $C-\varepsilon(C'+C'')>0$.
 Since $|a_m(x_0,\xi_0)|=|a_m(x(s),\xi(s))|$ for all $s \in \R$, we have that $(x(s),\xi(s)) \in S^\ast_A\R^n$ for all $s \in \R$, and that
    \begin{align}
   &( (C-\varepsilon(C'+C'')) \abs{\xi(s)}^{m-1}-(D\delta^{\frac{m-1}{2}}+\varepsilon(D'+D''))\\
   &\geq (C-\varepsilon(C'+C'')) c^{-(m-1)/m}|a_m(x(0),\xi(0))|^{(m-1)/m}-(D\delta^{\frac{m-1}{2}}+\varepsilon(D'+D'')),
     \end{align}
    for all $s \in \R$.
     Finally, choosing $\delta$ and $\varepsilon>0$ sufficiently small, we can conclude that there exists $\mu>0$ such that 
    \[
    \langle x(t) \rangle \gtrsim \abs{q_\delta(x(t),\xi(t))} \gtrsim q_\delta(x_0,\xi_0)+\mu t,\quad \forall t\geq 0,
    \]
    which shows that $(x_0,\xi_0)$ is nontrapped and concludes the proof. 
\end{proof}


\appendix

\section{Technical lemmas}\label{sec.teclem}

\begin{lemma}\label{lemmatec1}
    Let $p \in S^m(\R^n)$ and $N \in \mathbb{N}$. Then
    \[
    \begin{split}
    \langle x \rangle^{2N}\Op(p)f&=\Op(p)\langle x \rangle ^{2N}f+2N \sum_{j=1}^n \Op(i\partial_{\xi_j}p)x_j \langle x \rangle ^{2N-2}f \\
    &+ \sum_{\substack{\abs{\alpha+\beta} \leq N, \\ \abs{\alpha} \geq 2, \abs{\beta}\leq 2N-2}}C_{\alpha \beta}\Op(\partial_\xi^\alpha p)x^\beta f, \quad \quad f \in \mathscr{S}(\mathbb{R}^n).
    \end{split}
    \]
\end{lemma}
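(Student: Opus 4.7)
My plan is to reduce this to a single Leibniz-type identity and then peel off the first two terms, all starting from the basic Kohn--Nirenberg commutator
\[
x_j\,\Op(p)\;=\;\Op(p)\,x_j \;+\; \Op(i\partial_{\xi_j}p),
\]
which follows immediately from the Kohn--Nirenberg composition formula applied to $\Op(p)$ and the multiplication operator $x_j=\Op(x_j)$ (whose symbol is independent of $\xi$, so only the $|\alpha|=1$ term survives in the asymptotic sum, and this sum is exact since $x_j$ is polynomial in $x$).

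Next, I would iterate this identity by induction on $|\mu|$ to obtain the multi-index commutation formula
\[
x^\mu\,\Op(p)\;=\;\sum_{\nu\leq\mu}\binom{\mu}{\nu}\,\Op\!\bigl((i\partial_\xi)^\nu p\bigr)\,x^{\mu-\nu}, \qquad \mu\in\mathbb{N}_0^n,
\]
the inductive step being a direct application of the $|\mu|=1$ case to $x^\mu\Op(p)=x_k\cdot x^{\mu-e_k}\Op(p)$ together with Pascal's identity to collect the coefficients. Then I would expand $\langle x\rangle^{2N}=(1+|x|^2)^N=\sum_\mu c_\mu x^\mu$ as a polynomial of degree $2N$ in $x$, insert into the multi-index identity, and interchange the order of summation. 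Using $\partial_x^\nu x^\mu=\nu!\binom{\mu}{\nu}\,x^{\mu-\nu}$ (for $\nu\leq\mu$, zero otherwise), the inner sum collapses to a derivative of the polynomial and yields the clean Leibniz-type identity
\[
\langle x\rangle^{2N}\,\Op(p)\;=\;\sum_{|\nu|\leq 2N}\frac{1}{\nu!}\,\Op\!\bigl((i\partial_\xi)^\nu p\bigr)\,\partial_x^\nu\langle x\rangle^{2N}.
\]

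Finally, I would split this sum into three pieces. The term $\nu=0$ gives exactly $\Op(p)\langle x\rangle^{2N}$; the $|\nu|=1$ terms, i.e.\ $\nu=e_j$, use $\partial_{x_j}\langle x\rangle^{2N}=2N\,x_j\langle x\rangle^{2N-2}$ and contribute $2N\sum_{j}\Op(i\partial_{\xi_j}p)\,x_j\,\langle x\rangle^{2N-2}$; and for $|\nu|=|\alpha|\geq 2$, the polynomial $\partial_x^\alpha\langle x\rangle^{2N}$ has degree at most $2N-|\alpha|$, so expanding $\partial_x^\alpha\langle x\rangle^{2N}=\sum_{|\beta|\leq 2N-|\alpha|}c^{(\alpha)}_\beta\,x^\beta$ produces the remainder $\sum C_{\alpha\beta}\,\Op(\partial_\xi^\alpha p)\,x^\beta$ with $|\alpha|\geq 2$ and $|\beta|\leq 2N-|\alpha|\leq 2N-2$ (so in particular $|\alpha+\beta|\leq 2N$, which appears to be the intended bound in the statement).

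The main obstacle is the bookkeeping: verifying by induction the combinatorial coefficient $\binom{\mu}{\nu}$ in the iterated commutator, and then checking that the exponent constraints produced by differentiating $\langle x\rangle^{2N}$ exactly match those in the claimed remainder. The rest is formally automatic once the Leibniz identity above is in hand, since $\partial_x^\nu\langle x\rangle^{2N}$ is an explicit polynomial in $x$ that one can (if one wishes) compute closed-form via Faà di Bruno.
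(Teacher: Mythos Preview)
Your argument is correct and is the natural one: the Kohn--Nirenberg commutator $x_j\Op(p)=\Op(p)x_j+\Op(i\partial_{\xi_j}p)$ iterates to the multi-index Leibniz identity
\[
\langle x\rangle^{2N}\Op(p)=\sum_{|\nu|\leq 2N}\frac{1}{\nu!}\Op\bigl((i\partial_\xi)^\nu p\bigr)\,\partial_x^\nu\langle x\rangle^{2N},
\]
and the three displayed pieces fall out by reading off $\nu=0$, $|\nu|=1$, and $|\nu|\geq 2$. The paper states this lemma in the appendix without proof, so there is no alternative argument to compare against; your approach is exactly what one would expect any proof to do.

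Your observation about the index constraint is also well taken: the natural bound coming from $\deg\partial_x^\alpha\langle x\rangle^{2N}\leq 2N-|\alpha|$ is $|\alpha|+|\beta|\leq 2N$, not $|\alpha+\beta|\leq N$, and the applications of the lemma in the body of the paper only use $|\beta|\leq 2N-2$ and the order drop $|\alpha|$, so the sharper constraint plays no role there.
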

\begin{lemma}\label{lemmatec2}
    Let $N \in \mathbb{N}$ and $s \in \mathbb{R}$. Suppose $\langle x \rangle^{2N}u_0 \in H^{s+2N}(\R^n)$.
    Then, there exist $c_1,\dots,c_{2N},c>0$ constants, such that
    \[
    \sup_{0 \leq t \leq T} \norm{\langle x \rangle^{2N}W(t)u_0}^2_{H_x^s}\leq \sum_{j=0}^{2N}c_jT_j \norm{\langle x \rangle^{2N-J}u_0}_{H_x^{s+j}}^2
    \]
    and
    \[
    \sup_{0 \leq t \leq T} \norm{\langle x \rangle^{2N}W(t)u_0}^2_{H_x^s}\leq c(1+T^{2N}) \norm{\langle x \rangle^{2N}u_0}_{H_x^{s+2N}}^2,
    \]
    where $W(t)$ denotes the solution operator of the linear homogeneous initial value problem \eqref{homLIVP}.
\end{lemma}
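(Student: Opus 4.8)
The plan is to prove the first inequality by induction on the weight exponent and then to deduce the second from it by crude domination. Since $W(t)$ is the solution operator of the homogeneous problem \eqref{homLIVP} for the order-three operator $\tilde A$, Lemma \ref{lem.smoothing}(i) applied to the homogeneous equation gives $\sup_{0\le t\le T}\|W(t)u_0\|_{H^r_x}\lesssim \|u_0\|_{H^r_x}$ for every $r\in\R$, with the factor $e^{C_1T}$ uniformly bounded for $T$ in a bounded set. It therefore suffices to establish, for all $k\in\mathbb N_0$ and all $r\in\R$, the estimate
\[
\sup_{0\le t\le T}\big\|\langle x\rangle^{k}W(t)u_0\big\|_{H^r_x}^2\ \lesssim\ \sum_{j=0}^{k}T^{j}\,\big\|\langle x\rangle^{k-j}u_0\big\|_{H^{r+2j}_x}^2 ,
\]
where the precise powers of $T$ are immaterial and may be replaced by $1+T^{2k}$. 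The two displayed estimates of the lemma are then the special case $k=2N$, $r=s$, after bounding $\langle x\rangle^{2N-j}\le\langle x\rangle^{2N}$ and using the Sobolev embedding $H^{r+4N}\hookrightarrow H^{r+2j}$. Moreover one may first reduce, by a standard approximation, to $u_0\in\mathscr{S}(\R^n)$: then $W(t)u_0$ and all its polynomially weighted derivatives are smooth and rapidly decaying, so all the regularity requirements of Lemma \ref{lem.smoothing} hold along the argument, and the general case follows by density.

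The base case $k=0$ is the estimate just quoted. For the inductive step set $v:=W(t)u_0$ and $w:=\langle x\rangle^{k}v$. Commuting the weight $\langle x\rangle^{k}$ through $\tilde A=\Op^\w(\tilde a)$, $\tilde a\in S^3(\R^n)$ --- via Lemma \ref{lemmatec1} when $k$ is even, and via the same Weyl-composition expansion of the $SG$-symbol $\langle x\rangle^{k}$ with $\tilde a$ when $k$ is odd --- one obtains
\[
(\partial_t-i\tilde A)w\ =\ i\sum_{\ell=1}^{k}\Op(q_\ell)\,v ,
\]
where each $\Op(q_\ell)$ is, modulo a smoothing remainder, an operator of order $\max(3-\ell,0)$ applied to a function dominated, together with its derivatives, by $\langle x\rangle^{k-\ell}|v|$. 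By the $L^2$-boundedness of pseudo-differential operators this gives, for each $t$,
\[
\big\|(\partial_t-i\tilde A)w(t)\big\|_{H^r_x}^2\ \lesssim\ \sum_{\ell=1}^{k}\big\|\langle x\rangle^{k-\ell}v(t)\big\|_{H^{r+2}_x}^2 ,
\]
the worst contribution being $\ell=1$, where pushing one power of $\langle x\rangle$ past the order-three operator costs two $\xi$-derivatives. Applying Lemma \ref{lem.smoothing}(i) to $w$, then Cauchy-Schwarz in $t$ and the inductive hypothesis to each term with weight exponent $k-\ell<k$, yields
\[
\sup_{0\le t\le T}\|w(t)\|_{H^r_x}^2\ \lesssim\ \|\langle x\rangle^{k}u_0\|_{H^r_x}^2+T^2\sum_{\ell=1}^{k}\ \sum_{j'=0}^{k-\ell}T^{j'}\big\|\langle x\rangle^{k-\ell-j'}u_0\big\|_{H^{r+2+2j'}_x}^2 ,
\]
and relabelling $j=\ell+j'$, together with $2+2j'\le 2j$ and the remark above on the powers of $T$, collapses the right-hand side into $\sum_{j=0}^{k}T^{j}\|\langle x\rangle^{k-j}u_0\|_{H^{r+2j}_x}^2$. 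This closes the induction.

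The step requiring care is the bookkeeping of the weight-regularity trade-off: because commuting a single power of $\langle x\rangle$ past the order-three operator $\tilde A$ costs two derivatives, the induction must carry along a whole scale of Sobolev indices, and --- crucially --- it must be run over \emph{all} integer weight exponents, since commuting $\langle x\rangle^{2N}$ already generates the odd weight $\langle x\rangle^{2N-1}$ (this is exactly the middle term in Lemma \ref{lemmatec1}). Everything else is routine: the commutator expansion for an arbitrary integer power of $\langle x\rangle$ is produced by the Weyl calculus in the same way as Lemma \ref{lemmatec1}; the regularity hypotheses of Lemma \ref{lem.smoothing} are secured by the preliminary reduction to Schwartz data; and the constants furnished by Lemma \ref{lem.smoothing} are of the form $e^{C_1T}$, hence bounded on bounded time intervals, which is the only regime relevant to the applications of this lemma.
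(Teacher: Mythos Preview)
The paper does not supply a proof of this lemma (it is stated in the Appendix alongside Lemma~\ref{lemmatec1}, both without proof). Your approach --- induction on the weight exponent, commuting $\langle x\rangle^{k}$ through $\tilde A$ via the expansion of Lemma~\ref{lemmatec1}, and closing with the energy estimate of Lemma~\ref{lem.smoothing}(i) --- is the natural one and is carried out correctly.

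There is, however, a genuine mismatch between what you prove and the lemma as \emph{stated}. Since $\tilde A$ has order $m=3$, the commutator $[\langle x\rangle^{k},\tilde A]$ has $\xi$-order $m-1=2$, so every unit of weight costs \emph{two} derivatives. Your induction therefore produces the exponent $H^{r+2j}$ and, in the collapsed form, $H^{s+4N}$; the lemma claims $H^{s+j}$ and $H^{s+2N}$, which is the $1\!:\!1$ trade-off appropriate to an order-two operator (as in the Schr\"odinger case of \cite{FS21}, from which this lemma is adapted). Your sentence ``using the Sobolev embedding $H^{r+4N}\hookrightarrow H^{r+2j}$'' does not repair this: that embedding lets you \emph{enlarge} the right-hand side, not shrink it, so you cannot pass from your bound to the stated one.

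In summary, your argument establishes the correct weighted estimate for an order-three flow, namely
\[
\sup_{0\le t\le T}\|\langle x\rangle^{2N}W(t)u_0\|_{H^s_x}^2\ \lesssim\ (1+T^{2N})\,\|\langle x\rangle^{2N}u_0\|_{H^{s+4N}_x}^2,
\]
and this is exactly the version actually used in the proof of Theorem~\ref{thm.NLIVP} (where, with weight $\langle x\rangle^{N}=\lambda(|x|)^{-1}$, a loss of $2N$ derivatives from $H^{s-2N-2}$ to $H^{s-2}$ is incurred). The exponents in the lemma as printed are almost certainly typos carried over from the second-order setting; your proof is the right one for the present paper.
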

\begin{lemma}\label{lemmatec3}
Let $\delta \in [0,1[$ and define $\langle \xi \rangle_\delta:=(\delta+\abs{\xi}^2)^{1/2}$. Then, for all $m \in \mathbb{N}$, there exist $c\in (0,1)$ and $c_1>1$ such that
\begin{equation}\label{eq.delta1}
\frac{\abs{\xi}^{2(m-1)}}{\langle \xi \rangle^{m-1}_\delta}\geq c \abs{\xi}^{m-1}-c_1^{\frac{m}{2}}\delta^{\frac{m-1}{2}}, \quad \forall \xi \in \mathbb{R}^n.
\end{equation}
\end{lemma}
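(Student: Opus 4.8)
The statement to prove is the elementary inequality \eqref{eq.delta1}: for $\delta\in[0,1[$ and $\langle\xi\rangle_\delta:=(\delta+|\xi|^2)^{1/2}$, there exist $c\in(0,1)$ and $c_1>1$ such that $|\xi|^{2(m-1)}/\langle\xi\rangle_\delta^{m-1}\ge c|\xi|^{m-1}-c_1^{m/2}\delta^{(m-1)/2}$ for all $\xi\in\mathbb R^n$. The plan is to reduce everything to the one-variable inequality obtained by writing $r:=|\xi|\ge 0$ and comparing $r^{2(m-1)}(\delta+r^2)^{-(m-1)/2}$ with $r^{m-1}$.

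First I would observe that the claimed inequality is homogeneous under the scaling $\xi\mapsto\sqrt\delta\,\xi$ in the sense that, dividing through by $\delta^{(m-1)/2}$ and setting $t:=|\xi|^2/\delta\ge 0$, the inequality becomes equivalent to finding $c,c_1$ with
\[
\frac{t^{m-1}}{(1+t)^{(m-1)/2}}\ \ge\ c\,t^{(m-1)/2}-c_1^{m/2},\qquad \forall t\ge 0.
\]
(The case $\delta=0$ is trivial since then the left side of \eqref{eq.delta1} equals $|\xi|^{m-1}$ and one may take $c$ anything $\le 1$; so assume $\delta\in(0,1[$.) Thus it suffices to prove this single scalar inequality in $t$, with constants depending only on $m$.

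Next I would establish the scalar inequality by a two-regime argument. Writing $s:=t^{(m-1)/2}\ge 0$ and noting $(1+t)^{(m-1)/2}\le (2\max(1,t))^{(m-1)/2}\le 2^{(m-1)/2}(1+s)$ — using $t^{(m-1)/2}\le 1+s$ — the left side is bounded below by $2^{-(m-1)/2}\,s^2/(1+s)$. One checks elementarily that $s^2/(1+s)\ge s-1$ for all $s\ge 0$ (clear the denominator: $s^2\ge (s-1)(1+s)=s^2-1$). Hence the left side is $\ge 2^{-(m-1)/2}(s-1)=2^{-(m-1)/2}s-2^{-(m-1)/2}$. Taking $c:=2^{-(m-1)/2}$ (which lies in $(0,1)$ for $m\ge 2$; for $m=1$ the inequality is vacuous since both sides reduce to constants and one picks $c\in(0,1)$, $c_1>1$ freely) and choosing $c_1>1$ with $c_1^{m/2}\ge 2^{-(m-1)/2}$ — for instance $c_1=2$ works since $2^{m/2}\ge 2^{-(m-1)/2}$ — gives the desired bound. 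Unwinding the substitution $s=t^{(m-1)/2}$, $t=|\xi|^2/\delta$ and multiplying back by $\delta^{(m-1)/2}$ recovers \eqref{eq.delta1} exactly.

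The main (and only) subtlety is bookkeeping the constants so that the scale-invariant reduction is legitimate for all $\delta\in(0,1[$ simultaneously, and handling the degenerate value $\delta=0$ and the trivial case $m=1$ separately; the analytic content is just the elementary bound $s^2/(1+s)\ge s-1$ together with the crude estimate $(1+t)^{1/2}\le \sqrt2\,(1+t^{1/2})$ raised to the power $m-1$. No result beyond elementary inequalities is needed, so there is no genuine obstacle — the proof is a short computation once the substitution is made.
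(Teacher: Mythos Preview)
Your proof is correct and takes a genuinely different route from the paper's. The paper first establishes an auxiliary inequality $|\xi|^j \geq \tfrac{R-1}{R}\langle\xi\rangle_\delta^j - \delta^{j/2}(1+C)^{j/2+1/2}R^{j/2-1}$ for all $j\in\mathbb{N}$, $R>1$, by a binomial expansion of $(\delta+|\xi|^2)^k$ when $j=2k$ is even and a separate reduction for $j$ odd, and only then deduces the lemma from this. Your approach is considerably more direct: the scaling substitution $t=|\xi|^2/\delta$ collapses everything to a single scalar inequality in $t\ge 0$, and the crude bound $(1+t)^{(m-1)/2}\le 2^{(m-1)/2}(1+t^{(m-1)/2})$ together with the one-line estimate $s^2/(1+s)\ge s-1$ finishes it. What the paper's argument buys is an explicit intermediate inequality \eqref{eq.delta2} with a free parameter $R$ (which could in principle be tuned), whereas your argument buys brevity and transparency---no binomial coefficients, no even/odd case split. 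One small quibble: your closing summary phrase ``$(1+t)^{1/2}\le\sqrt{2}(1+t^{1/2})$ raised to the power $m-1$'' does not quite match the inequality you actually use in the body (which is $(1+t)^{(m-1)/2}\le 2^{(m-1)/2}(1+t^{(m-1)/2})$, proved via $\max(1,t)$), but the main argument itself is clean and correct.
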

\begin{proof}
First, our goal is to prove that for all $j \in \mathbb{N}$ and for all $R>1$, we have 
\begin{equation}\label{eq.delta2}
\abs{\xi}^j\geq \bigl(\frac{R-1}{R}\bigr) \langle \xi \rangle_\delta^j-\delta^{j/2}(1+C)^{j/2+1/2}R^{j/2-1}, \quad \forall \xi \in \mathbb{R}^n,
\end{equation}
We split the proof of \eqref{eq.delta2} into the cases $j\in 2\mathbb{N}$ and $j\not\in 2\mathbb{N}$.

If $j \in 2\mathbb{N}$ and $k=j/2$, we have 
\[
\begin{split}
\abs{\xi}^j&=(\delta-\delta+\abs{\xi}^2)^{j/2}\\
&=\sum_{\ell=0}^kc_\ell (\delta+\abs{\xi}^2)^{k-\ell}(-\delta)^{\ell}\\
&=(\delta+\abs{\xi}^2)^k+\delta^k\sum_{\ell \neq 0}c_\ell\Bigl(1+\frac{\abs{\xi}^2}{\delta}\Bigr)^{k-\ell}(-1)^\ell \\
&\geq (\delta+\abs{\xi}^2)^k-C\delta^k\Bigl(1+\frac{\abs{\xi}^2}{\delta}\Bigr)^{k-1}\\
&=\delta^k\Bigl(\bigl(1+\frac{\abs{\xi}^2}{\delta}\bigr)^{k}-C\bigl(1+\frac{\abs{\xi}^2}{\delta}\bigr)^{k-1}\Bigr),
\end{split}
\]
for some $C>0$.
Now, for all $R>1$, we have  
\[
\Bigl(\bigl(1+\frac{\abs{\xi}^2}{\delta}\bigr)^{k}-C\bigl(1+\frac{\abs{\xi}^2}{\delta}\bigr)^{k-1}\Bigr)\geq 
\begin{cases}
    &(1-\frac{1}{R})\bigl(1+\frac{\abs{\xi}^2}{\delta}\bigr)^{k}, \quad \text{if} \ \ \bigl(1+\frac{\abs{\xi}^2}{\delta}\bigr)>CR \\
    & \bigl(1+\frac{\abs{\xi}^2}{\delta}\bigr)^{k}-C^kR^{k-1}, \quad \text{if} \ \ \bigl(1+\frac{\abs{\xi}^2}{\delta}\bigr) \leq CR.
\end{cases}
\]
Therefore, 
\[
\begin{split}
\abs{\xi}^j&\geq \delta^k \bigl(1-\frac{1}{R}\bigr)\bigl(1+\frac{\abs{\xi}^2}{\delta}\bigr)^{k}-\delta^kC^kR^{k-1} \\
&=\bigl(\frac{R-1}{R}\bigr) \langle \xi \rangle_\delta^j-\delta^kC^kR^{k-1}\\
&\geq\bigl(\frac{R-1}{R}\bigr) \langle \xi \rangle_\delta^j-\delta^{j/2}(1+C)^{j/2}R^{j/2-1}
\end{split}
\]
On the other hand, if $j \notin 2\mathbb{N}$ and $j+1=2k$, since $1 \geq \abs{\xi}/\langle \xi \rangle_\delta$, we have 
\begin{align}
\abs{\xi}^j&\geq \frac {\abs{\xi}^{j+1}}{\langle \xi \rangle_\delta}\geq \bigl(\frac{R-1}{R}\bigr) \langle \xi \rangle_\delta^j-\frac{\delta^{k}C^kR^{k-1}}{\langle \xi \rangle_\delta}\geq \bigl(\frac{R-1}{R}\bigr) \langle \xi \rangle_\delta^j-\delta^{k-1/2}C^kR^{k-1}\\
&\geq \bigl(\frac{R-1}{R}\bigr) \langle \xi \rangle_\delta^j-\delta^{j/2}(1+C)^{\frac{j+1}{2}}R^{j/2-1/2}.
\end{align}
Therefore, combining the above estimates, we have \eqref{eq.delta2} for all $j \in \mathbb{N}$.

We now use the previous inequality to prove \eqref{eq.delta1}. 
For all $m \in \mathbb{N}$, and for all $\delta\in (0,1]$,  we obtain
\[
\begin{split}
\frac{\abs{\xi}^{2(m-1)}}{\langle \xi \rangle_\delta^{m-1}}&\geq \frac{\abs{\xi}^{m-1}}{\langle \xi \rangle_\delta^{m-1}}\bigl( 
\bigl(\frac{R-1}{R}\bigr) \langle \xi \rangle^{m-1}_\delta-c_1^{\frac m 2}\delta^{\frac{m-1}{2}}\bigr)\\
 & \geq c \abs{\xi}^{m-1}-c_1^{\frac m 2}\delta^{\frac{m-1}{2}}, \quad \forall \xi \in \mathbb{R}^n,
 \end{split}
\]
for some $c,c_1>0$, with $c<1$ and $c_1>1$.
\end{proof}

\end{document}